\newtheorem{thm}{Theorem}[section]
\newtheorem{lem}[thm]{Lemma}
\newtheorem{cor}[thm]{Corollary}
\theoremstyle{definition}
\newtheorem{defn}[thm]{Definition}
\theoremstyle{remark}
\numberwithin{equation}{section}
\newcommand{\R}{\mathbb{R}}  
\newcommand{\C}{\mathbb{C}} 
\newcommand{\Z}{\mathbb{Z}} 
\newcommand{\N}{\mathbb{N}} 
\newcommand{\E}{\mathcal{E}}
\newcommand{\Ell}{\mathcal{L}}
\newcommand{\D}{\mathcal{D}}
\newcommand{\RZ}{\R / \Z}
\newcommand{\Si}{\textnormal{Si}}
   \def\MR#1{}
\renewcommand\section{\@startsection{section}{1}%
  \z@{.7\linespacing\@plus\linespacing}{.5\linespacing}%
  {\normalfont\bfseries\centering}}
\newenvironment{nouppercase}{%
  \renewcommand{\uppercasenonmath}[1]{}}{}
\begin{document}
	
\date{\today}

\title[On the regularity of critical points for O'Hara's knot energies: From smoothness to analyticity.]{\Large On the regularity of critical points for O'Hara's knot energies: From smoothness to analyticity.}

\author{Nicole Vorderobermeier}
\thanks{The author acknowledges support by the Austrian Science Fund (FWF), Grant P 29487.}
\address{Fachbereich Mathematik, Universität Salzburg, Hellbrunner Strasse 34, 5020 Salzburg, Austria}
\email{nicole.vorderobermeier@sbg.ac.at}
\urladdr{https://uni-salzburg.at/index.php?id=209722} 

\keywords{Analyticity, knot energy, O'Hara's knot energies, method of majorants, fractional Leibniz rule}
\subjclass[2010]{35B65, 57M25}

\begin{abstract}
We prove the analyticity of smooth critical points for O'Hara's knot energies 
$\E^{\alpha,p}$, with $p=1$ and $2<\alpha< 3$, subject to a fixed length constraint. 
This implies, together with the main result in \cite{BR13}, that bounded energy critical points of $\E^{\alpha,1}$ 
subject to a fixed length constraint are not only $C^\infty$ but also analytic. 
Our approach is based on Cauchy's method of majorants and a decomposition of the gradient 
that was adapted from the Möbius energy case $\E^{2,1}$ in \cite{BV19}.
\end{abstract}

\begin{nouppercase}
\maketitle
\end{nouppercase}

\section{Introduction}

Knots have always played an important role in arts and crafts, commerce and trade, as well as in our everyday life. Therefore they naturally became a topic of interest for mathematicians. 
During the 19th century  the study of knots strongly influenced the development of topology \cite{TvdG96}. 
Today the theory of knots appears in several branches of mathematics such as in calculus of variations, geometric analysis, topology as well as in applications to modern quantum physics (e.g. \cite{K05}) and biochemistry (e.g. protein molecules \cite{KS98} or DNA \cite{CKS98,GM99}).

A knot in mathematical terms is a Jordan curve in the three-dimensional Euclidean space (i.e.~a continuous embedding of the circle $\RZ$ into $\R^3$). We say that two given knots belong to the same knot class if one knot can be deformed into the other without any  `cuttings and gluings' nor any self-intersections. 
Within this context, the following two questions arise: Is it possible to determine `nicely' shaped representatives of each knot class? And if so, how `nice' are they?

The first question was originally addressed by  Fukuhara \cite{F88} in the context of  polygonal knots.
In order to detect optimal shapes of a polygonal knot, an energy modelling a form of self-avoidance on the space of polygonal knots was introduced---from which optimal shapes can be identified as energy minimizers.
Subsequently, O’Hara \cite{OH91,OH92-2} extended Fukuhara's approach to geometric knots. For any Jordan curve $\gamma: \RZ\rightarrow\R^3$, he introduced the potential energies 
\begin{align}\label{defn:O'Hara's knot energies}
\E^{\alpha,p}(\gamma) = \iint_{(\RZ)^2} \left(\frac{1}{|\gamma(x)-\gamma(y) |^\alpha} - \frac{1}{\mathcal{D}(\gamma(x),\gamma(y))^\alpha} \right)^p |\dot{\gamma}(x)| |\dot{\gamma}(y)| \mathrm{d}x \mathrm{d}y.
\end{align}
Here the quantity $\mathcal{D}(\gamma(x),\gamma(y))$ denotes the intrinsic distance between the points $\gamma(x)$ and $\gamma(y)$ along the curve, i.e. for $|x-y|\leq \frac{1}{2}$ we have 
\begin{align*}
\mathcal{D}(\gamma(x),\gamma(y)) = \min \{\Ell (\gamma|_{[x,y]}), \Ell(\gamma) - \Ell (\gamma|_{[x,y]}) \},
\end{align*}
where $\Ell (\gamma) = \int_0^1 |\dot{\gamma} (t)| \mathrm{d}t$ denotes the length of the curve $\gamma$. 
All values of $\E^{\alpha,p}$ are non-negative due to the fact that the intrinsic distance between two points of the curve is always greater than the Euclidean distance. The factor $|\gamma'(x)| |\gamma'(y)|$ guarantees the invariance of $\E^{\alpha,p}$ under reparametrization of the curve. In addition, $\E^2=\E^{2,1}$ is called Möbius energy since it is invariant under Möbius transformations (cf. \cite[Theor.~2.1]{FHW94}).
For $p \geq 1$  and $0 < \alpha p < 2p +1$ the energies $\E^{\alpha,p}$ are globally minimized by circles, whereas for $\alpha p \geq 2p +1$ their values become infinite for every closed regular curve (cf.~\cite[Corol.~3]{ACFGH03}). 

To distinguish between knot classes, it is desirable  for a  knot energy to be self-repulsive (i.e.~ to penalize self-intersections) and tight (i.e.~ to blow up on a sequence of small knots that pull tight). O'Hara   showed that the knot energies $\E^{\alpha,p}$ are indeed self-repulsive for the cases	$p \geq \tfrac 2\alpha$, $0< \alpha \leq 2$ and $\tfrac{1}{\alpha-2}> p \geq  \tfrac 2\alpha$, $2< \alpha \leq 4$ (cf.~\cite[Theor.~1.1]{OH94}) and  tight if and only if $\alpha p >2$ (cf.~\cite[Theor.~3.1]{OH92-2}). Moreover,   \cite[Theor.~3.2]{OH94} showed that for $\alpha p > 2$ there exist minimizers of the energies within every knot class among all curves with fixed length. In case of the Möbius energy, we also have self-repulsiveness but not tightness (cf.~\cite[Theor.~3.1]{OH94}) and we only know that there exist minimizers in prime knot classes (cf.~\cite[Theor.~4.3]{FHW94}). We see that due to the well-definedness and existence of minimizers of a certain range of O'Hara's energies $\E^{\alpha,p}$, this approach for answering our first question looks promising.

In the following we want to focus on the second question by determining the regularity properties of minimizers of the knot energies.
For that reason we restrict ourselves to examining O'Hara's energies $\E^\alpha = \E^{\alpha,1}$ for $2\leq \alpha <3$ as they are well-defined in a knot-theoretic sense and have a  non-degenerate first variation (in contrast to the $p>1$ case).
For the Möbius energy, one of the first regularity results goes back to He \cite[Chap.~5]{H00} who states, based on Freedman, He and Wang's work \cite[Theor.~5.4]{FHW94}, that any local minimizer $\gamma$ of $\E^2$ with respect to the $L^\infty$-topology belongs to  $C^\infty (\RZ,\R^3)$.
Reiter \cite{R12} generalized this result to the class of O'Hara's knot energies $\E^\alpha$ for  $2\leq \alpha <3$ and $n\geq 3$ by showing that any critical point $\gamma$ of $\E^\alpha$ in the class $H^\alpha(\RZ,\R^n)$ with $\gamma ''\in L^2(\RZ,\R^n)$ is smooth.  
A subsequent improvement on the energy space conditions furnishes the following  $C^\infty$-result.

\begin{thm}[Blatt and Reiter \cite{BR13}] \label{smooth}
	Let $\gamma:\RZ\rightarrow \R^n$ be a simple closed Lipschitz-continuous  arc-length parametrized  curve with 
	$\gamma \in H^{\frac{\alpha +1}{2},2}(\mathbb R / \mathbb Z, \mathbb R^n)$ for any $2< \alpha<3$. If $\gamma$ is a critical point of $\E^\alpha + \lambda \mathcal{L}$, i.e. 
	\begin{align*}
	\delta \E^\alpha(\gamma;h) + \lambda \int_{\RZ} \langle \dot{\gamma}, \dot{h} \rangle \mathrm{d} x = 0 
	\quad \text{for all} \quad h\in H^{\frac{\alpha +1}{2},2}(\RZ,\R^n),
	\end{align*}
	then $\gamma$ belongs to $C^\infty (\RZ,\R^n)$.
\end{thm}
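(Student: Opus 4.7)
The plan is to derive the Euler–Lagrange equation as a nonlocal fractional PDE on $\RZ$ and then bootstrap regularity by a Schauder-type iteration in fractional Sobolev spaces. First I would compute the first variation $\delta \E^\alpha(\gamma; h)$ by differentiating under the double integral and using the arc-length parametrization to remove the $|\dot\gamma|$ weights. The outcome I expect is a bilinear decomposition of the form
\begin{align*}
\delta \E^\alpha(\gamma;h) = \langle Q^{\alpha+1}\gamma, h\rangle_{L^2(\RZ)} + N(\gamma,h),
\end{align*}
where $Q^{\alpha+1}$ is a pseudodifferential operator of order $\alpha+1$ whose principal part is the periodic fractional Laplacian $(-\Delta)^{(\alpha+1)/2}$, and $N(\gamma,h)$ collects remainder integrals whose kernels involve finite differences $\gamma(x+t)-\gamma(x)-t\dot\gamma(x)$. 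The length constraint contributes the lower-order term $-\lambda\ddot\gamma$, which is harmless compared to the leading operator since $2<\alpha+1$.

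With this decomposition in hand, the criticality condition rewrites as a pointwise fractional equation
\begin{align*}
(-\Delta)^{(\alpha+1)/2}\gamma = F(\gamma)
\end{align*}
in the distributional sense, where $F(\gamma)$ gathers the nonlinear remainder from $N$ and the term $\lambda\ddot\gamma$. The crucial structural point, adapted from the Möbius energy analysis, is that the cancellations present in the Taylor-type finite differences appearing in $F$ produce an actual gain in regularity: if $\gamma\in H^{s,2}$, then $F(\gamma)\in H^{s-(\alpha+1)/2+\epsilon,2}$ for a fixed $\epsilon>0$ depending only on $\alpha$, $s$ and the bi-Lipschitz constant of $\gamma$. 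Proving this gain is where I would invest the most effort: it rests on splitting the kernel into a principal singular part and a smoother remainder, on a fractional Leibniz / Kato–Ponce rule for composite expressions like $|\gamma(x)-\gamma(y)|^{-\alpha}$, and on commutator estimates that exploit the subtraction of the reparametrization-invariant companion term $\D(\gamma(x),\gamma(y))^{-\alpha}$.

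Once the smoothing $F\colon H^{s,2}\to H^{s-(\alpha+1)/2+\epsilon,2}$ is established, the bootstrap is routine: starting from $\gamma\in H^{(\alpha+1)/2,2}$, inverting $(-\Delta)^{(\alpha+1)/2}$ on each Fourier mode (and handling the finite-dimensional kernel by the mean-value normalization) promotes $\gamma$ to $H^{(\alpha+1)/2+\epsilon,2}$. One iterates, checking at each step that the bi-Lipschitz bound survives (which it does, since Sobolev embedding keeps $\dot\gamma$ uniformly continuous once $s>3/2$), until the Sobolev index exceeds any prescribed threshold; Morrey embedding then delivers $\gamma\in C^\infty(\RZ,\R^n)$.

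The main obstacle is the second step: extracting the $\epsilon$-gain from the nonlinear remainder $F(\gamma)$. The difficulty is that the naive counting of derivatives makes $F$ look precisely as singular as the leading operator, so the gain must come from genuine algebraic cancellations inside the double integral defining $\delta\E^\alpha$, combined with sharp fractional product and commutator estimates. Everything else—the PDE reformulation, the length-constraint handling, and the iteration itself—is comparatively mechanical once the right decomposition of the gradient has been identified.
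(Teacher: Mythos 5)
Your outline reproduces the strategy Blatt and Reiter actually use in \cite{BR13}: decompose the first variation, via He's linearization trick, into a leading pseudodifferential operator of order $\alpha+1$ (the term $Q^\alpha$ in this paper's notation, with Fourier symbol $\sim |k|^{\alpha+1}$ by Theorem~\ref{thm:QFourier}) plus lower-order remainders $R_1^\alpha,R_2^\alpha$ built from Taylor-type finite differences, prove that the nonlinearity gains regularity over the naive derivative count, and bootstrap by inverting the leading operator. One small bookkeeping slip: $(-\Delta)^{(\alpha+1)/2}$ has symbol $|\xi|^{\alpha+1}$ and so gains $\alpha+1$, not $(\alpha+1)/2$, derivatives upon inversion, so the smoothing estimate you need is of the form $F\colon H^s\to H^{s-(\alpha+1)+\epsilon}$ rather than $H^{s-(\alpha+1)/2+\epsilon}$; this does not change the structure of the argument.
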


An important characterization of curves with finite O'Hara energy goes back to Blatt \cite{B12} who showed that curves $\gamma$ have finite energy $\E^{\alpha}(\gamma) < \infty$ if and only if they belong to the fractional Sobolev space $H^{\frac{\alpha +1}{2},2}(\mathbb R / \mathbb Z, \mathbb R^n)$. This result, combined with Theorem~\ref{smooth}, raises the question whether critical points of O'Hara's energies $\E^\alpha$ of finite energy are not only smooth but also analytic. 
For the Möbius energy $\E^2$ this question was solved in the affirmative by \cite{BV19}.
One might conjecture that it is possible to transfer this result to the energy classes  $\E^\alpha$ for  $2< \alpha<3$.
Unlike the Möbius energy case, the first variation of $\E^\alpha$ for  $2< \alpha<3$ leads to the appearance of a fractional derivative of the product of two functions.
The latter is subsequently analyzed via a new fractional Leibniz rule instead of the bilinear Hilbert transform, which was used in the Möbius energy case. 

The main result of this article confirms the above analyticity conjecture, namely: 

\begin{thm}\label{main}
Let $\gamma :\RZ\rightarrow \R^n$ be a closed simple arc-length parametrized curve in
$C^\infty(\RZ,\R^n)$. 
If $\gamma$ is a critical point of O'Hara's energy  $\E^\alpha$,  $2< \alpha<3$,
with a length term $\Ell$,  
i.e.~$\E^\alpha + \lambda \Ell$, then the curve $\gamma$ is analytic.
\end{thm}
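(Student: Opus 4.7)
The plan is to bootstrap quantitative derivative bounds of the form $\|\gamma^{(k)}\|_{L^\infty(\RZ)} \leq C^{k+1} k!$ for all $k \geq 0$ and some $C < \infty$ independent of $k$; such an estimate is equivalent to real-analyticity of $\gamma$. Since $\gamma$ is already in $C^\infty$ by hypothesis, the issue is purely quantitative, and I would follow the strategy of \cite{BV19} for the Möbius case, combining a careful decomposition of the $L^2$-gradient of $\E^\alpha + \lambda \Ell$ with Cauchy's method of majorants.

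The first step is to rewrite the Euler--Lagrange equation in the schematic form $L\gamma = R(\gamma)$, where $L$ is a linear pseudodifferential operator on the circle (essentially a fractional Laplacian of order $\alpha+1$, with a lower-order contribution from the length term) and $R(\gamma)$ collects the genuinely nonlinear remainder. The concrete splitting should be obtained by adapting the gradient decomposition of \cite{BV19}: one identifies the bilinear part of $\delta \E^\alpha(\gamma;h)$ that acts as a Fourier multiplier in $\gamma$ and gathers all lower-order/nonlinear terms into $R$. The key structural difference from the case $\alpha = 2$ is that $R(\gamma)$ now contains a fractional derivative applied to a product of functions involving $\gamma$ and $\dot\gamma$; rather than appealing to the bilinear Hilbert transform as in \cite{BV19}, I would control it through a fractional Leibniz rule tailored to the order $\alpha-1$ on $\RZ$.

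With the equation in the form $L\gamma = R(\gamma)$, I would proceed by strong induction on $k$. Differentiating the equation $k$ times, inverting $L$ (which gains $\alpha+1$ derivatives and whose inverse has an integrable convolution kernel on the circle), and applying the fractional Leibniz rule expands $\partial^k R(\gamma)$ into a sum indexed by partitions $k_1 + \cdots + k_m$ of multilinear expressions in the $\gamma^{(k_i)}$. The induction hypothesis $\|\gamma^{(j)}\|_{L^\infty} \leq C^{j+1} j!$ for $j < k$ converts each such term into a multinomial expression; the classical identity $\sum \binom{k}{k_1,\dots,k_m} = m^k$ together with the integrability of the kernel of $L^{-1}$ should then close the estimate at step $k$, provided $C$ is chosen large enough in terms of $\alpha$, the bilipschitz constant of $\gamma$, and $\|\ddot\gamma\|_{L^\infty}$.

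The main obstacle is precisely the fractional Leibniz step: unlike the integer-order product rule, a fractional derivative of a product produces a genuine remainder integral whose kernel must be estimated against derivatives of $\gamma$ without losing the factorial control. Designing a fractional Leibniz rule whose remainder is majorized by a convergent power series with the same radius of convergence as the expected analytic bound is, I expect, the technical heart of the proof. Once such a rule is in place, the remaining majorant computations run parallel to the Möbius case, and the induction closes uniformly in $k$, yielding the desired analyticity.
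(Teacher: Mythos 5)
Your overall roadmap matches the paper's: decompose $\delta\E^\alpha$ into a leading Fourier-multiplier part and lower-order remainders, recognize that the new difficulty compared with $\alpha=2$ is a fractional derivative acting on a product, prove a fractional Leibniz rule to control it, and close with a majorant argument. That much is exactly right. But there are two concrete gaps in the way you propose to carry this out.

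First, your induction step does not close as stated. With the hypothesis $\|\gamma^{(j)}\|\le C^{j+1}j!$ and a trilinear term (which is what $P^T_{\dot\gamma}Q^\alpha\gamma$ really is after the identity \eqref{formula:compTQ}), the multinomial identity gives
$\sum_{k_1+k_2+k_3=k}\binom{k}{k_1,k_2,k_3}\prod C^{k_i+1}k_i!=C^{k+3}\,k!\binom{k+2}{2}$,
which exceeds $C'^{\,k+1}k!$ by a polynomial factor in $k$ no matter how $C'$ is chosen. The paper avoids this entirely by comparing the sequence $a_l=C_1\|\partial^l f\|_{H^{1+\beta}}$ with the Taylor coefficients $\tilde a_l$ of the solution of an explicit analytic ODE $\dot c=F(c)$, whose analyticity is furnished by the Cauchy--Kovalevskaya theorem (Theorem~\ref{CK}); the recursion \eqref{formula:al+1} is then term-by-term dominated by \eqref{formula:tildeal+1}. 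If you wish to proceed by direct induction you would need a modified hypothesis with extra polynomial decay (e.g.\ $\|\gamma^{(j)}\|\le C^{j+1}j!/(j+1)^2$) to absorb the combinatorial growth; the majorant method is precisely the systematic way to choose such a hypothesis without bookkeeping.

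Second, you fold all of $R(\gamma)$ into ``multilinear expressions in the $\gamma^{(k_i)}$'' controlled by the fractional Leibniz rule, but the remainder terms $R_1^\alpha$ and $R_2^\alpha$ are not multilinear in derivatives of $\gamma$. As shown in Section~\ref{sec:remaining parts}, $P^\perp_{\dot\gamma}R_i^{\alpha}\gamma$ is an integral over $G_i^\alpha$ composed with expressions involving $\dot\gamma,\ddot\gamma$, where $G_i^\alpha$ involves a rational function such as $g_2^\alpha(x)=\tfrac{1}{2|x|^\alpha}\tfrac{1-|x|^\alpha}{1-|x|^2}$. Differentiating $k$ times therefore requires Fa\`a di Bruno's formula \eqref{eqn:FdB} together with factorial bounds on $\partial^\eta G_i^\alpha$ along the image, which rests on $G_i^\alpha$ being analytic and on the curve being regular and simple so that the image of $\int_0^1\dot\gamma(x+tw)\,dt$ stays away from $0$ and $1$. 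This is a genuinely separate ingredient from the fractional Leibniz rule, which is applied only to the trilinear tangential term $P^T_{\dot\gamma}Q^\alpha\gamma$. Relatedly, the Leibniz-type estimate of Theorem~\ref{thm:BilinearHT} is naturally an $H^m$ estimate; the paper runs the whole argument in $H^1$ and $H^{1+\beta}$ and only at the very end converts to pointwise analyticity via Corollary~\ref{cor:TayloranalyticSob}, so if you insist on $L^\infty$ bounds you would have to rederive the Leibniz rule in a different scale.
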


This theorem, together with the characterization of the energy spaces in \cite{B12} 
and  Theorem~\ref{smooth},  implies the following:

\begin{cor}\label{maincor}
Let  $\gamma :\RZ\rightarrow \R^n$ be a closed simple arc-length parametrized curve  with $\E^\alpha (\gamma) < \infty$ for $2< \alpha<3$.
If $\gamma$ is a critical point of $\E^\alpha + \lambda \Ell$, then the curve $\gamma$ is analytic.
\end{cor}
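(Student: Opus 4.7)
The plan is to prove Corollary~\ref{maincor} by chaining together three results already in play: Blatt's characterization of finite-energy curves as elements of the fractional Sobolev space $H^{\frac{\alpha+1}{2},2}(\RZ,\R^n)$, the Blatt--Reiter smoothness result (Theorem~\ref{smooth}), and the analyticity statement (Theorem~\ref{main}). This is a soft bootstrapping argument rather than a new computation; the substance of the corollary lies in having previously established each link of the chain.

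First, I would observe that arc-length parametrization gives $|\dot\gamma|\equiv 1$, so $\gamma$ is automatically $1$-Lipschitz. Together with the hypothesis $\E^\alpha(\gamma)<\infty$, Blatt's characterization from \cite{B12} then places $\gamma$ in the energy space $H^{\frac{\alpha+1}{2},2}(\RZ,\R^n)$. Next, I would verify that all hypotheses of Theorem~\ref{smooth} are met: $\gamma$ is a simple closed Lipschitz arc-length parametrized curve in the required Sobolev class, and the criticality of $\E^\alpha + \lambda\Ell$ translates directly into the weak Euler--Lagrange equation displayed in the statement of Theorem~\ref{smooth}, since $\delta\Ell(\gamma;h) = \int_{\RZ}\langle\dot\gamma,\dot h\rangle\,\mathrm{d}x$. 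Invoking that theorem upgrades $\gamma$ to $C^\infty(\RZ,\R^n)$.

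Finally, with smoothness in hand, the hypotheses of Theorem~\ref{main} are all satisfied verbatim---closed, simple, arc-length parametrized, $C^\infty$, and a critical point of $\E^\alpha + \lambda\Ell$ for $2<\alpha<3$---so that theorem yields analyticity of $\gamma$, completing the corollary.

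There is essentially no obstacle in the corollary itself; the only care required is bookkeeping at the interfaces. In particular, one must confirm that the notion of criticality in the hypothesis (a critical point of $\E^\alpha + \lambda\Ell$) is tested against the same class of admissible variations $H^{\frac{\alpha+1}{2},2}(\RZ,\R^n)$ assumed in Theorem~\ref{smooth}, and that the arc-length parametrization together with finite $\E^\alpha$-energy is enough regularity to trigger Blatt's characterization. Both points are routine given the framework already set up in the introduction, so the corollary follows immediately from the three cited ingredients.
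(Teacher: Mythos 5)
Your proposal is correct and matches the paper's argument exactly: the corollary is derived by chaining Blatt's characterization of finite-energy curves as elements of $H^{\frac{\alpha+1}{2},2}(\RZ,\R^n)$, the Blatt--Reiter smoothness result (Theorem~\ref{smooth}), and the analyticity result (Theorem~\ref{main}). The interface checks you flag (Lipschitz continuity from arc-length parametrization, matching the class of admissible variations) are precisely the routine bookkeeping the paper leaves implicit.
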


Note that we study critical points of O'Hara's energy $\E^\alpha$ subject to a Lagrange multiplier length term since $\E^\alpha$ for $2<\alpha <3$ is not scale-invariant in contrast to the Möbius energy case.

\subsubsection*{\textbf{Exposé of the present work.}} 

The main goal of this article is to give a rigorous proof of Theorem~\ref{main}. We adapt the methods from the Möbius energy case in \cite{BV19} to generate a proof which is as elementary as possible. 
For the convenience of the reader we will provide detailed proofs in this article on the one hand to emphasize the differences to \cite{BV19} and, on the other hand, to make the article comprehensible without the need for a detailed reading of \cite{BV19}.

In Section~\ref{sec:preliminaries} we recall some basic definitions and properties of fractional Sobolev spaces and Fourier series. We also characterize analytic functions and state an ordinary differential equation (ODE)-version of the theorem of Cauchy-Kovalevskaya. We close the preliminaries with recapitulating Fa\`a di Bruno's formula. 
In Section \ref{section:decomposition} we decompose the first variation of O'Hara's energies into the orthogonal projection of a main part $Q^\alpha$ and two remaining parts $R_1^\alpha$ and $R_2^\alpha$ of lower order. The main part $Q^\alpha$ and its derivatives are estimated in Section \ref{sec:maintermQ}. Since the orthogonal projection of $Q^\alpha$ appears in the first variation of O'Hara's energies, we estimate the tangential part of $Q^\alpha$ using new estimates for a kind of fractional derivative of a product of two functions that, in Section \ref{sec:bilinearHilberttransform}, that behave like a fractional Leibniz rule. In Section \ref{sec:bilinearHilberttransform} and Section \ref{sec:remaining parts} we cut-off the singularities of the singular integrals involved and derive uniform estimates. More precisely, in Section \ref{sec:remaining parts} we rewrite  the orthogonal projection of the truncated remaining terms so that they may be expressed by integrals over analytic functions. 
In Section \ref{sec:mainpart} we realize that estimates do not depend on the cut-off parameter and hold for the orthogonal projections of $Q^\alpha$, $R_1^\alpha$, $R_2^\alpha$, and its derivatives. Finally,  we will use the  estimates to prove Theorem~\ref{main} using Cauchy's method of majorants.

\section{Preliminaries}\label{sec:preliminaries}

\subsection{Fractional Sobolev spaces}\label{sec:fractionalsobolevspaces}

We denote by $|\cdot | $ the Euclidean norm on $\C^n$ for any integer $n\geq 1$.
In this article we work with closed curves on the periodic domain $\RZ$ so 
that a curve $f: \R \rightarrow \R^n$ is periodic with unit periodicity. 

We know that for any $f\in L^2(\RZ, \C^n)$ the Fourier series of $f$ in $x\in \RZ$ is given by $\sum_{k\in\Z} \widehat{f}(k) e^{2\pi i k x},$ where the $k$-th Fourier coefficient of $f$ are given by 
\begin{align*}
\widehat{f}(k) = \int_0^1 f(x) e^{-2\pi i k x} \mathrm{d} x .
\end{align*}

The \emph{fractional Sobolev space} of order $s\geq 0$ (i.e.~the Bessel potential space of order $s \geq 0$) is defined as
\begin{align*}
H^s(\RZ, \C^n) := \{f\in L^2 (\RZ, \C^n) \ | \ \| f \|_{H^s}:= \| f \|_{H^s(\RZ,\C^n)} := \sqrt{(f,f)_{H^s}} < \infty  \},
\end{align*}
equipped with the scalar product  
\begin{align*}
(f,g)_{H^s} := (f,g)_{H^s(\RZ,\C^n)} := \sum_{k\in\Z} (1+k^2)^s \langle \widehat{f}(k), \widehat{g}(k) \rangle_{\C^n}.
\end{align*}
 
Furthermore, we will need the embeddings $H^s(\RZ,\C^n)\subseteq H^t(\RZ,\C^n)$ for any $0 \leq t<s$ (cf.~\cite[Prop.~2.1, Corol.~2.3]{DNPV12}) as well as $H^s(\RZ,\C^n) \subseteq C(\RZ,\C^n)$ for any $s> \frac{1}{2}$ (cf.~\cite[Theor.~8.2]{DNPV12}), from which we can deduce that the Fourier series of any $f\in H^s(\RZ,\C)$ with $s> \frac{1}{2}$ converges absolutely and uniformly to $f$. 
In fact the space $H^m$, for any integer $m\geq 0$, coincides with the classical Sobolev space and the norms $\|\cdot\|_{H^m}$ and $\|f\|_{W^m}:= (\sum_{\nu=0}^{m} \|\partial^\nu f\|^2_{L^2})^{\frac{1}{2}}$ are equivalent (cf.  \cite[Lem.~1.2]{R09} or \cite[7.62]{AF03}).
Furthermore, let us also  mention the Banach algebra property of $H^m$ for any integer $m\geq 1$, i.e. there exists a positive constant $C_m$ such that 
	\begin{align}\label{prop:Banachalgebra}
	\|fg\|_{H^m} \leq C_m \|f\|_{H^m} \|g\|_{H^m}
	\end{align}
for all $f,g \in H^m(\RZ,\R)$  (cf.~\cite[Theor.~4.39]{AF03}).

\subsection{Properties of analytic functions}

We shortly recall some basic characteristics of analytic functions. More information on analytic functions can be found for instance in  \cite[Chap.~6.4]{E98} and \cite[Chap.~1,~D]{F95}. 

It is well-known that analytic functions can be characterized by \cite[Prop.~2.2.10]{KP02} and a standard covering argument as follows:
\begin{thm}\label{thm:Tayloranalytic}
A function 	 $f\in C^\infty (\Omega,\R^n)$ on $\Omega \subseteq \R^m$ open, $n,m \geq 1$,
	is analytic on $\Omega$ if and only if for every compact set $K \subseteq \Omega$
	there are positive constants $r_K$ and $C_K$ such that
	\begin{align*}
	\|\partial^\alpha f\|_{L^\infty(K)} \leq C_K \frac{|\alpha|!}{r_K^{|\alpha|}}
	\end{align*}
	holds for every multiindex $\alpha \in \mathbb N_0^m$.
\end{thm}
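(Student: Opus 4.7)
The plan is to reduce the global characterization on compact sets to the pointwise local version, [KP02, Prop.~2.2.10], which states that $f$ is real-analytic at a single point $x_0 \in \Omega$ if and only if there exist an open neighborhood $U$ of $x_0$ together with positive constants $C, r > 0$ satisfying $\|\partial^\alpha f\|_{L^\infty(U)} \leq C\, |\alpha|!/r^{|\alpha|}$ for every multiindex $\alpha \in \N_0^m$. From this pointwise statement, the uniform estimate on an arbitrary compact set $K \subseteq \Omega$ will follow by compactness, and the converse by picking suitable small balls inside $\Omega$.

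For the direction ``$f$ analytic on $\Omega$ $\Rightarrow$ derivative bound on $K$'', I would fix a compact $K \subseteq \Omega$ and apply the cited pointwise criterion at every $x \in K$ to obtain an open neighborhood $U_x$ and constants $C_x, r_x > 0$ realizing the estimate on $U_x$. By compactness of $K$, finitely many such neighborhoods $U_{x_1}, \ldots, U_{x_N}$ cover $K$. The choices $C_K := \max_{j} C_{x_j}$ and $r_K := \min_{j} r_{x_j}$ then immediately yield
\[
\|\partial^\alpha f\|_{L^\infty(K)} \leq C_K \frac{|\alpha|!}{r_K^{|\alpha|}}
\]
for every $\alpha \in \N_0^m$.

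For the converse, assume the uniform estimate on every compact $K \subseteq \Omega$. Pick an arbitrary $x_0 \in \Omega$ and, using openness of $\Omega$, choose $\rho > 0$ small enough that $\overline{B_\rho(x_0)} \subset \Omega$. Applying the hypothesis to $K := \overline{B_\rho(x_0)}$ supplies constants $C_K, r_K > 0$ with $\|\partial^\alpha f\|_{L^\infty(B_\rho(x_0))} \leq C_K\, |\alpha|!/r_K^{|\alpha|}$ for all $\alpha$, which is exactly the pointwise hypothesis of [KP02, Prop.~2.2.10] at $x_0$. Hence $f$ is analytic at $x_0$, and since $x_0 \in \Omega$ was arbitrary, $f$ is analytic on $\Omega$.

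The real content of the theorem — that derivative growth of the form $|\alpha|!/r^{|\alpha|}$ on a ball is equivalent to absolute convergence of the Taylor series of $f$ on a smaller concentric ball — is encapsulated in the cited [KP02, Prop.~2.2.10]; it ultimately rests on Lagrange's remainder formula combined with the multinomial identity $\sum_{|\alpha|=k} |\alpha|!/\alpha! = m^k$ to dominate the Taylor partial sums by a geometric series. I therefore do not anticipate any serious obstacle here: the only delicate point is to ensure that the neighborhoods produced in each direction truly sit inside $\Omega$, which is guaranteed by the openness of $\Omega$ (in the converse) and the compactness of $K$ (in the forward direction).
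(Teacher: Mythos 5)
Your proposal is correct and takes the same route as the paper: the paper offers no detailed argument but explicitly attributes the result to \cite[Prop.~2.2.10]{KP02} combined with a ``standard covering argument,'' which is exactly the reduction you carry out. Both directions of your covering argument are sound, and the choice $C_K := \max_j C_{x_j}$, $r_K := \min_j r_{x_j}$ works as stated.
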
 

The previous theorem, together with the embedding of the classical Sobolev space $W^{k}=W^{k,2}$ into $C^0$ for $k\in \mathbb N$ with $k>\frac m2$, yields the following:

\begin{cor} 
A function 	 $f\in C^\infty (\Omega,\R^n)$  on $\Omega \subseteq \R^m$ open, $n,m \geq 1$,	
	is analytic on $\Omega$ if for every compact set $K \subset \Omega$
	there are positive constants $r_K$ and $C_K$ such that
	\begin{align*}
	\|\partial^\alpha f\|_{W^1(K)} \leq C_K \frac{|\alpha|!}{r_K^{|\alpha|}}
	\end{align*}
	holds for every multiindex $\alpha \in \mathbb N_0^m.$
\end{cor}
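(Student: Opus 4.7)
The plan is to reduce the corollary to Theorem~\ref{thm:Tayloranalytic} by upgrading the $W^1$-bounds from the hypothesis to $L^\infty$-bounds of the same Cauchy-type form. The engine will be exactly the Sobolev embedding $W^{k,2}(K)\hookrightarrow C^0(K)$ for any integer $k>m/2$ mentioned in the paragraph preceding the statement; the only real work is to control the higher-order Sobolev norm $\|\partial^\alpha f\|_{W^{k,2}(K)}$ by rewriting it in terms of the $W^1$-norms of shifted multiindices.

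First I would fix a compact $K\subset\Omega$. By hypothesis, there exist constants $C_K,r_K>0$ such that $\|\partial^\alpha f\|_{W^1(K)}\leq C_K\,|\alpha|!/r_K^{|\alpha|}$ for every multiindex $\alpha\in\mathbb N_0^m$. The crucial observation is that this bound is valid for \emph{every} multiindex, so in particular it may be applied to shifted indices $\alpha+\gamma$; hence $L^2(K)$-control of $\partial^\beta f$ of arbitrary order $\beta$ is available, with the same Cauchy-type right-hand side up to a shift in the factorial and the power.

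Next, I would set $k:=\lfloor m/2\rfloor+1$ so that $W^{k,2}(K)\hookrightarrow C^0(K)$ with some embedding constant $C_{\mathrm{Sob}}$. For each multiindex $\delta$ with $|\delta|\leq k$, decompose $\delta=\gamma+\beta$ with $|\gamma|\leq k-1$ and $|\beta|\leq 1$, so that
\[
\|\partial^{\alpha+\delta}f\|_{L^2(K)}\leq \|\partial^{\alpha+\gamma}f\|_{W^1(K)}\leq C_K\,\frac{(|\alpha|+|\gamma|)!}{r_K^{|\alpha|+|\gamma|}}.
\]
Summing over the finitely many $\delta$ with $|\delta|\leq k$ and using the crude estimate $(|\alpha|+k)!\leq |\alpha|!\,(|\alpha|+k)^k$, I obtain $\|\partial^\alpha f\|_{W^{k,2}(K)}\leq \widetilde C_K\,|\alpha|!\,(|\alpha|+k)^k/r_K^{|\alpha|}$ with $\widetilde C_K$ independent of $\alpha$.

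Finally, for any $r'\in(0,r_K)$ the polynomial factor is dominated by $(|\alpha|+k)^k\leq C(k,r',r_K)\,(r_K/r')^{|\alpha|}$, so the previous bound absorbs into $\|\partial^\alpha f\|_{W^{k,2}(K)}\leq C'_K\,|\alpha|!/(r')^{|\alpha|}$. Combined with the Sobolev embedding this delivers the same type of bound for $\|\partial^\alpha f\|_{L^\infty(K)}$, and Theorem~\ref{thm:Tayloranalytic} then yields analyticity of $f$ on $\Omega$. The only delicate point is this last absorption step, in which the admissible radius must shrink from $r_K$ to any strictly smaller $r'$; apart from that, the argument is routine bookkeeping together with the Sobolev embedding, and I do not foresee a conceptual obstacle.
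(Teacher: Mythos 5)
Your proof is correct and follows the same route the paper indicates (Sobolev embedding $W^{k,2}\hookrightarrow C^0$ for $k>m/2$ combined with Theorem~\ref{thm:Tayloranalytic}): the paper gives no details, and you correctly supply them, namely the shifted-multiindex bookkeeping that upgrades the $W^1$-bound to a $W^k$-bound of Cauchy type and the standard absorption of the polynomial factor $(|\alpha|+k)^k$ into $(r_K/r')^{|\alpha|}$ for any $r'<r_K$. The only point left implicit on both sides is that $W^{k,2}(K)\hookrightarrow C^0(K)$ needs mild regularity of $K$ (e.g.\ the cone condition), so one should formally reduce to a finite cover of $K$ by closed balls contained in $\Omega$; this is routine and does not affect the argument.
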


By the equivalence of the $W^1$- and the $H^1$-norm on $\mathbb R / \mathbb Z$ as well as the embedding $H^s\subseteq H^t$ for any $t < s$ we obtain:

\begin{cor} \label{cor:TayloranalyticSob}
	Let $f\in C^\infty (\mathbb R / \mathbb Z,\R^n)$ and $s\in (0,1)$. Then the function $f$ is analytic on $\RZ$ if  there are positive constants $r$ and $C$ such that
	\begin{align*}
	\|\partial^k f\|_{H^{1+s}} \leq C \frac{k!}{r^{k}}
	\end{align*}
	holds for all integers $k \geq 0$.
\end{cor}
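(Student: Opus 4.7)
The plan is a direct chain of embeddings reducing the claim to the preceding corollary. Since $\RZ$ is itself compact, it suffices to produce constants $r',C'>0$ with $\|\partial^k f\|_{W^1(\RZ)}\leq C'\,k!/(r')^k$ for every $k\in\N_0$, and then to invoke the previous corollary with $K=\RZ$. The notational hypotheses match because in one dimension every multiindex $\alpha\in\N_0^1$ is just a non-negative integer $k=|\alpha|$, so $\partial^\alpha f=\partial^k f$.

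The first step is to use the embedding $H^{1+s}(\RZ,\R^n)\subseteq H^1(\RZ,\R^n)$, which is valid whenever $s>0$ and was recalled in Section~\ref{sec:fractionalsobolevspaces}. This yields a constant $C_1=C_1(s)>0$ such that
\begin{align*}
\|\partial^k f\|_{H^1}\leq C_1\|\partial^k f\|_{H^{1+s}}\leq CC_1\,\frac{k!}{r^k}
\end{align*}
for every integer $k\geq 0$. The second step invokes the equivalence of the $H^1$- and $W^1$-norms on $\RZ$, providing a constant $C_2>0$ with $\|g\|_{W^1}\leq C_2\|g\|_{H^1}$ for all $g\in H^1(\RZ,\R^n)$, and hence
\begin{align*}
\|\partial^k f\|_{W^1}\leq CC_1C_2\,\frac{k!}{r^k}.
\end{align*}
Applying the previous corollary with $K=\RZ$, $r_K:=r$, and $C_K:=CC_1C_2$ then delivers analyticity of $f$.

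There is no genuine obstacle in this proof beyond the bookkeeping of constants. The only moment of care is to verify that the chain $H^{1+s}\hookrightarrow H^1\simeq W^1\hookrightarrow C^0$ actually converts the given Sobolev bound into the uniform Taylor-type bound demanded by Theorem~\ref{thm:Tayloranalytic}, and this is precisely what the preceding corollary has already packaged. In particular, the one-dimensional Sobolev embedding $W^1\hookrightarrow C^0$ (which uses $k>m/2=1/2$) is crucial here and is the reason the exponent $1+s$ rather than a smaller fractional exponent appears in the hypothesis.
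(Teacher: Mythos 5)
Your proof follows exactly the route the paper takes: $H^{1+s}\hookrightarrow H^1$, the norm equivalence $H^1\simeq W^1$ on $\RZ$, and then an appeal to the preceding $W^1$-corollary (itself resting on the Sobolev embedding into $C^0$). The bookkeeping of constants and the identification of multiindices with non-negative integers in one dimension are handled correctly, so this is essentially the paper's proof.
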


Furthermore, we will need a ODE version of the theorem of Cauchy-Kovalevskaya, which is originally an existence and uniqueness theorem for analytic nonlinear partial differential equations associated with Cauchy initial value problems.

\begin{thm}[Cauchy-Kovalevskaya - ODE case]\label{CK}
	Suppose the function $g: \R^n \rightarrow \R^n$ is real analytic around $0$ and the function $f \in C^\infty((-\varepsilon, \varepsilon),\R^n)$ of the form $f(x)= (f_1(x),\ldots,f_n(x))$, for any $ \varepsilon > 0$, is a solution of the initial value problem
	\begin{align*}
	\dot{f}(x) & = g (f(x)) \textnormal{ for } x \in (-\varepsilon,\varepsilon) \textnormal{ with}\\
	f(0)& =0.
	\end{align*}
	Then the function $f$ is real analytic around $0$.
\end{thm}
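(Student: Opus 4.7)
The plan is to prove this ODE version of the Cauchy--Kovalevskaya theorem via the classical method of majorants, exactly as alluded to in the title of the paper and implicit in the choice to recall Faà di Bruno's formula in the preliminaries.

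First I would expand $g$ around $0$ as a convergent power series $g(y)=\sum_{\alpha}a_{\alpha}y^{\alpha}$ on some polydisc and use Cauchy estimates to obtain constants $M,\rho>0$ with $|a_\alpha|\le M/\rho^{|\alpha|}$ componentwise for every multiindex $\alpha$. Next I would differentiate the relation $\dot f=g(f)$ iteratively. Combined with $f(0)=0$, Faà di Bruno's formula expresses each scalar component $f_j^{(k)}(0)$ as a fixed universal polynomial $P_{k,j}$, with non-negative integer coefficients, in the partial derivatives $\partial^{\alpha}g(0)$ for $|\alpha|\le k-1$ and in the previously determined values $f^{(\ell)}(0)$ for $\ell\le k-1$ (which are again polynomial in the same data, again with non-negative coefficients, by induction). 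This positivity of all coefficients is the crucial structural input for a majorant argument.

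The next step is to introduce the majorant. Define the vector field $G:\R^n\to\R^n$ by
\begin{equation*}
G_j(y)\;=\;\frac{M\rho}{\rho-(y_1+\cdots+y_n)},\qquad j=1,\ldots,n,
\end{equation*}
which is analytic in a neighbourhood of $0$ and whose Taylor coefficients $b_\alpha^{(j)}\ge 0$ satisfy $|a_\alpha^{(j)}|\le b_\alpha^{(j)}$ for every $\alpha$ and every $j$, by choice of $M$ and $\rho$. Consider the associated initial value problem $\dot F=G(F)$, $F(0)=0$. By symmetry of $G$ in the components of $y$, the unique local solution has $F_1=\cdots=F_n=:\phi$, where $\phi$ solves the scalar separable ODE $(\rho-n\phi)\,\dot\phi=M\rho$ with $\phi(0)=0$, giving an explicit algebraic expression that is real analytic at $0$ with radius of convergence $r>0$, and in particular $F^{(k)}(0)\le C\,k!/r^k$ by Theorem~\ref{thm:Tayloranalytic} applied to $\phi$.

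Finally I would compare: applying the same universal polynomials $P_{k,j}$ with non-negative coefficients to the majorized data $(b_\alpha,F^{(\ell)}(0))$ rather than $(a_\alpha,f^{(\ell)}(0))$ yields, by induction on $k$, the componentwise bound $|f^{(k)}(0)|\le F^{(k)}(0)\le C\,k!/r^k$ for all $k\ge 0$. Invoking the characterization of real analyticity by Taylor coefficient growth (Theorem~\ref{thm:Tayloranalytic}), this forces $f$ to be real analytic near $0$. The main obstacle in a completely rigorous write-up is the bookkeeping in the induction: one has to verify cleanly, via Faà di Bruno, that the polynomial expression for $f^{(k)}(0)$ is both universal (independent of the particular $g$) and has non-negative coefficients, since this positivity is what converts the coefficientwise inequality $|a_\alpha|\le b_\alpha$ into $|f^{(k)}(0)|\le F^{(k)}(0)$ without any cancellation issues.
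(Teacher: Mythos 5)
The paper does not prove Theorem~\ref{CK} itself; it records it as classical and refers to the method of majorants in Folland and Evans. Your proposal is a correct, self-contained proof by exactly that method: the Cauchy estimates $|a_\alpha|\le M/\rho^{|\alpha|}$, the choice of majorant $G_j(y)=M\rho/(\rho-(y_1+\cdots+y_n))$ whose Taylor coefficients $M|\alpha|!/(\alpha!\,\rho^{|\alpha|})$ dominate those of $g$, the symmetry reduction of $\dot F=G(F)$ to the separable scalar ODE $(\rho-n\phi)\dot\phi=M\rho$ with the explicit analytic branch $\phi(0)=0$, and the inductive comparison $|f^{(k)}(0)|\le F^{(k)}(0)$ using the universality and non-negativity of the Faà di Bruno polynomials recalled in \eqref{eqn:FdB}, are all handled correctly and yield analyticity via Theorem~\ref{thm:Tayloranalytic}.
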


One possible method to prove the theorem of Cauchy-Kovalevskaya is the \emph{method of majorants} (c.f. \cite[Chap.~1,~D]{F95} or \cite[Chap.~4.6.3, Theor.~2]{E98}). This method turns out to be useful in proving the main result of this article.

\subsection{Faà di Bruno's formula}

	The $k$-th derivative of the composition of two functions $f, g\in C^k (\R,\R)$ can be expressed by Faà di Bruno's formula which is given by
	\begin{align*}
	&\left(\frac{d}{dt}\right)^k g(f(t)) = \\
	&\sum_{\substack{m_1+2m_2+ \cdots km_k=k, \\ m_1,\ldots,m_k\in\N_0}} \frac{k!}{m_1!1!^{m_1}m_2!2!^{m_2}\ldots m_k!k!^{m_k}} g^{(m_1+\cdots +m_k)}(f(t))\prod_{j=1}^{k} (f^{(j)} (t))^{m_j}.
	\end{align*}
	This formula can be generalized to the multivariate case by a result of  \cite{M00}. In our case the precise generalized Faà di Bruno's formula is not required.
	Nonetheless  we will make use of the the following that can be easily proven from scratch by induction:

	For any functions $f \in C ^k (\mathbb R, \mathbb R ^n)$ and $g \in C^k(\mathbb R ^n, \mathbb R)$, for integers $n\geq 1$ and $k\geq 0$, there exists a universial polynomial $p_k^{(n)}$ with non-negative coefficients, which are independent of $f$ and $g$, such that 
	\begin{equation}\label{eqn:FdB}
	\partial^k g(f(x)) = p_k^{(n)} (\{\partial^\alpha g\}_{|\alpha|\leq k}, \{\partial^j f_i\}_{i=1, \ldots, n, j=1, \ldots, k} ).
	\end{equation}
	In addition, $p_k^{(n)}$ is one-homogeneous in the first entries.

\section{Decomposition of the first variation of $\E^\alpha$}\label{section:decomposition}

Results concerning the differentiability of O'Hara's   knot energies   go back to the paper \cite{FHW94} where the Gâteaux differentiability and the $L^2$-gradient of the Möbius energy $\E^{2}$ was derived. 
Subsequently, a linearized version for the gradient of the Möbius energy was given by He \cite[Lem.~2.2]{H00} in the context of the heat flow.
An application of He's linearization trick to a range of O'Hara's knot energies $\E^\alpha$, for $2 \leq \alpha <3$, was given by Reiter \cite[Chap.~2]{R12}.

In the following we will use the linearization trick of He and Reiter to decompose the first variation of the energies $\E^\alpha$, for $2 < \alpha < 3$, into a highest order quasilinear part $Q^\alpha$ and two remaining  $R_1^\alpha$, $R_2^\alpha$ parts of lower order (as it was done in \cite{BR13} and \cite[Theor.~2.3]{B18}).
A similar decomposition of the first variation proved apt for the analyticity proof of critical points of the Möbius energy that was given in  \cite{BV19}.

Let $\gamma \in C^\infty(\RZ,\R^n)$ be a  simple closed arc-length parametrized curve  
and $x\in\RZ$. We  recall that the orthogonal projection  $P^\perp_{\dot{\gamma}(x)} : \R^n \rightarrow \R^n$ at the point $\gamma(x)$ onto the normal space of the curve $\gamma$  is given by 
\begin{align*}
  (P^\perp_{\dot{\gamma}}v)(x) = P^\perp_{ \dot{\gamma}(x)}(v) = v-\langle v,\dot{\gamma}(x) \rangle_{\R^n} \dot{\gamma}(x)
\end{align*}
for any $v\in \R^n$. 
Furthermore, the tangential projection  $P^T_{\dot{\gamma}} : \R^n \rightarrow \R^n$  at the point $\gamma(x)$ onto the tangent space of the curve $\gamma$  is given by 
\begin{align*}
(P^T_{\dot{\gamma}}v)(x) = P^T_{\dot{\gamma}(x)} (v) = \langle v, \dot{\gamma}(x) \rangle_{\R^n} \dot{\gamma}(x)
\end{align*} 
for any $v\in \R^n$. We remark also that the curvature vector of the curve $\gamma$ is denoted by $\kappa$
i.e.~$\kappa = (\frac{d}{ds})^2 \gamma$ where $\frac{d}{ds}$ is the derivative with respect to the arc-length. 
We recall the following:

\begin{thm}[{Reiter \cite[Theor.~2.24]{R12}}]\label{thm:1stvar}
The first variation of $\E^{\alpha}$, for $2 < \alpha < 3$, at a simple regular curve $\gamma \in C^\infty(\RZ,\R^n)$ in direction $h \in H^2 (\RZ,\R^n)$ can be expressed as
	\begin{align*}
	\delta \E^\alpha(\gamma, h) = \int_{0}^{1} \langle (H^\alpha\gamma)(x), h(x) \rangle_{\R^n} |\dot{\gamma}(x)|  \mathrm{d} x,
	\end{align*}
	where
	\begin{align*}
	&(H^\alpha \gamma)(x) := \lim_{\epsilon \downarrow 0} \int_{|w| \in [\epsilon, \frac{1}{2}]} P^\perp_{\dot{\gamma}(x)} \{ 2\alpha \frac{\gamma(x+w)-\gamma(x)}{|\gamma(x+w)-\gamma(x)|^{2+\alpha}} \\
	&\hspace{5em}  -(\alpha -2) \frac{\kappa(x)}{\D(\gamma(x+w),\gamma(x))^{\alpha}} -2 \frac{\kappa(x)}{|\gamma(x+w)-\gamma(x)|^{\alpha}} \} |\dot{\gamma}(x+w)| \mathrm{d} w.
	\end{align*}
\end{thm}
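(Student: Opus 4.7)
The plan is to compute $\delta\E^\alpha(\gamma;h)=\tfrac{d}{d\epsilon}\big|_{\epsilon=0}\E^\alpha(\gamma+\epsilon h)$ by differentiating under the double integral in \eqref{defn:O'Hara's knot energies} and then reorganizing the resulting conditionally convergent expression via symmetrization and the He--Reiter linearization trick. First, after excising a neighborhood $\{|x-y|<\epsilon\}$ of the diagonal and using dominated convergence on the complement, one differentiates each factor in the integrand: $|\gamma(x)-\gamma(y)|^{-\alpha}$ yields $-\alpha\,|\gamma(x)-\gamma(y)|^{-\alpha-2}\langle\gamma(x)-\gamma(y),\,h(x)-h(y)\rangle$; the intrinsic-distance term is handled through $\delta\Ell(\gamma|_{[x,y]};h)=\int_x^y\langle\dot\gamma,\dot h\rangle|\dot\gamma|^{-1}\,ds$, followed by an integration by parts that moves the derivative off $h$ and produces a piece of shape $\kappa/\D^\alpha$; and the arc-length measure factors $|\dot\gamma(x)||\dot\gamma(y)|$ contribute tangential derivatives of $h$ that, after a further integration by parts, yield a piece of shape $\kappa/|\gamma(x+w)-\gamma(x)|^\alpha$.

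Next I would collect every contribution using the elementary symmetrization identity
\begin{equation*}
\iint F(x,y)\,\langle A(x,y),\,h(x)-h(y)\rangle\,dx\,dy\;=\;2\iint F(x,y)\,\langle A(x,y),\,h(x)\rangle\,dx\,dy,
\end{equation*}
valid whenever $F$ is symmetric and $A$ is antisymmetric in $(x,y)$, followed by the change of variable $y=x+w$. This rewrites the first variation as $\int\langle V(x),h(x)\rangle\,|\dot\gamma(x)|\,dx$ with $V$ a principal-value $w$-integral. The projection $P^\perp_{\dot\gamma(x)}$ may then be inserted inside this integral without changing its value: the tangential component of $V$ vanishes both by the reparametrization invariance of $\E^\alpha$ (which forces $\delta\E^\alpha(\gamma;\lambda\dot\gamma)\equiv 0$) and by direct computation, since the arc-length Taylor expansion $\gamma(x+w)-\gamma(x)=w\dot\gamma(x)+\tfrac{w^2}{2}\kappa(x)+O(w^3)$ shows that the tangential leading term is odd in $w$. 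The precise coefficients $2\alpha$, $-(\alpha-2)$ and $-2$ are exactly those for which the bracketed integrand becomes absolutely integrable in $w$: expanding via $|\gamma(x+w)-\gamma(x)|^{-\alpha}=|w|^{-\alpha}(1+O(w^2))$ and $\D(\gamma(x+w),\gamma(x))=|w|+O(w^3)$, the leading $\kappa/|w|^\alpha$ singularities cancel because $\tfrac{2\alpha}{2}=(\alpha-2)+2=\alpha$.

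The hard part is the rigorous bookkeeping of these cancellations and the justification of the two limiting procedures. One must verify that, after $P^\perp_{\dot\gamma(x)}$ is applied, the bracketed integrand is uniformly dominated by an $L^1_{\mathrm{loc}}$ function of $w$---the worst remaining term being of order $|w|^{-\alpha+1}$, which is integrable for $\alpha<3$---so that both the principal-value limit $\epsilon\downarrow 0$ and the interchange of $\tfrac{d}{d\epsilon}$ with the integral are fully justified. Once this is established, rewriting the assembled expression as $\int_0^1\langle(H^\alpha\gamma)(x),h(x)\rangle\,|\dot\gamma(x)|\,dx$ delivers the stated formula.
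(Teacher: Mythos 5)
The paper does not supply its own proof of this statement: Theorem~\ref{thm:1stvar} is cited verbatim from Reiter \cite[Theor.~2.24]{R12}, so there is no internal argument to compare against. Your outline — differentiate under the double integral after excising the diagonal, symmetrize in $(x,y)$, substitute $y=x+w$, insert the projection $P^\perp_{\dot\gamma(x)}$, and track the cancellation of the $|w|^{-\alpha}$ singularity via $\alpha=(\alpha-2)+2$ — is broadly consistent with the structure of Reiter's derivation, and the symmetrization identity and the role of reparametrization invariance are the right tools.

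There is, however, a concrete error in the convergence step. You claim that after applying $P^\perp_{\dot\gamma(x)}$ the bracketed integrand is ``uniformly dominated by an $L^1_{\mathrm{loc}}$ function of $w$--the worst remaining term being of order $|w|^{-\alpha+1}$, which is integrable for $\alpha<3$.'' This is false: for $\alpha\in(2,3)$ the exponent $1-\alpha$ lies in $(-2,-1)$, so $|w|^{1-\alpha}$ is \emph{not} locally integrable near $w=0$. Indeed, after projecting, the term $2\alpha\,P^\perp_{\dot\gamma(x)}(\gamma(x+w)-\gamma(x))/|\gamma(x+w)-\gamma(x)|^{2+\alpha}$ has the expansion
$\alpha\,\ddot\gamma(x)\,|w|^{-\alpha}+\tfrac{\alpha}{3}\,P^\perp_{\dot\gamma(x)}\dddot\gamma(x)\,w\,|w|^{-\alpha}+O(|w|^{2-\alpha})$,
and the two $\kappa/|w|^\alpha$ pieces cancel only the \emph{leading} $|w|^{-\alpha}$ term, leaving an odd singularity of order $w\,|w|^{-\alpha}$ that sits outside $L^1$. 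The limit $\epsilon\downarrow 0$ in the definition of $H^\alpha\gamma$ therefore exists only as a genuine symmetric principal value: it relies on the oddness of that singular contribution over a domain symmetric about $w=0$, not on absolute integrability. This matters for your argument, since you invoke this (nonexistent) $L^1$ majorant both to pass to the limit $\epsilon\downarrow 0$ and to justify interchanging $d/d\epsilon$ with the double integral; both interchanges need an argument that exploits the odd/even structure and the cut-off, precisely as encoded in the paper's truncated operators $Q^{\alpha,\varepsilon}$, $R_i^{\alpha,\varepsilon}$. Relatedly, the remark that ``the tangential leading term is odd in $w$'' shows only that the dominant tangential singularity integrates to zero in the principal-value sense for each fixed $\epsilon$; the pointwise normality of the gradient that licenses inserting $P^\perp_{\dot\gamma(x)}$ is the global consequence of reparametrization invariance, and you should keep those two observations distinct.
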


Due to the  the fact that 
$P^\perp_{\dot{\gamma}(x)}(\dot{\gamma}(x)) = 0$, 
$\langle \dot{\gamma}(x),\ddot{\gamma}(x) \rangle_{\R^n} = 0$,
 the curve $\gamma$ is parametrized by arc-length
  and the orthogonal projection $P^\perp_{\dot{\gamma}(x)}$ is linear, 
one can easily rewrite $H^{\alpha} \gamma$ as 
\begin{align}\label{formula:H}
(H^\alpha\gamma) (x) = (P^\perp_{\dot{\gamma}}\widetilde{H}^\alpha \gamma) (x) = P^\perp_{\dot{\gamma}(x)} ((\widetilde{H}^\alpha \gamma) (x)) ,
\end{align}
where 
\begin{align*}
(\widetilde{H}^{\alpha} \gamma) (x) := \lim_{\varepsilon \downarrow 0} & \int_{|w| \in [\varepsilon,\frac{1}{2}]} \{ 2\alpha \frac{\gamma(x+w)-\gamma(x)-w\dot{\gamma}(x)}{|\gamma(x+w)-\gamma(x)|^{2+\alpha}} \\
& -(\alpha -2) \frac{\ddot{\gamma}(x)}{ \D(\gamma(x+w),\gamma(x))^{\alpha}}  -2 \frac{\ddot{\gamma}(x)}{ |\gamma(x+w)-\gamma(x)|^{\alpha}} \}  \mathrm{d} w .
\end{align*}

Now we want to apply the  strategy of  He and Reiter indicated as in the above.  We decompose $\widetilde{H}^\alpha \gamma$ pointwise for all $x\in \RZ$ into three functionals by
\begin{align}\label{formula:Htilde}
\widetilde{H}^\alpha\gamma = \alpha Q^\alpha\gamma + 2\alpha R_1^\alpha \gamma - 2 R_2^\alpha \gamma  ,
\end{align}
where
\begin{align*}
	Q^\alpha\gamma (x) 
	&= \lim_{\varepsilon \downarrow 0}Q^{\alpha,\varepsilon}\gamma(x) \\	
	R_1^\alpha \gamma (x) &= \lim_{\varepsilon \downarrow 0}R_1^{\alpha,\varepsilon}\gamma(x) \\
	R_2^\alpha \gamma (x) &= \lim_{\varepsilon \downarrow 0}R_2^{\alpha,\varepsilon}\gamma(x) 
\end{align*}
are given, for any $0<\varepsilon\leq \frac 12$,  by  
\begin{align*}
(Q^{\alpha,\varepsilon}\gamma)(x) &= \int_{|w| \in [\varepsilon, \frac{1}{2}]} \left(2 \frac{\gamma(x+w)-\gamma(x)-w\dot{\gamma}(x)}{w^2} - \ddot{\gamma}(x) \right) \frac{\mathrm{d} w}{|w|^\alpha}, \\
(R_1^{\alpha,\varepsilon} \gamma)(x) &= \int_{|w| \in [\varepsilon, \frac{1}{2}]} (\gamma(x+w)-\gamma(x)-w\dot{\gamma}(x)) \left(\frac{1}{|\gamma(x+w)-\gamma(x)|^{\alpha +2}} - \frac{1}{ |w|^{\alpha +2}}\right)\mathrm{d} w, \\
(R_2^{\alpha,\varepsilon} \gamma)(x) &= \int_{|w| \in [\varepsilon, \frac{1}{2}]} \ddot{\gamma}(x)\left(\frac{1}{|\gamma(x+w)-\gamma(x)|^{\alpha}} - \frac{1}{ |w|^{\alpha }}\right)\mathrm{d} w
\end{align*}
Blatt and Reiter  \cite{BR13} have already used this partitioning for studying the regularity of stationary points of O'Hara's energies $\E^{\alpha}$, $2<\alpha <3$, and furthermore, Blatt (cf.~\cite[Theor.~2.3]{B18}) used it in the study of the gradient flow for the same range of O'Hara's knot energies.

In the next section we will see that $Q^\alpha$ contains the highest order part of the first variation of $\E^\alpha$. Thereafter the challenge will be to attain sufficient estimates of the tangential part of $Q^\alpha$, whereas $R_1^\alpha$ and $R_2^\alpha$ are of lower order and easier to get under control.

\section{An estimate for the $Q^\alpha$ term}\label{sec:maintermQ}

Let the quantities 
\begin{align*}
q_k^{(\alpha)} := \begin{cases}
\frac{4\pi}{|k|^{\alpha +1}} \int_0^\pi \frac{(k\tau)^2 - 2 + 2\cos(k\tau)}{\tau^{2+\alpha}} \mathrm{d}\tau & \quad k\neq 0, \\
0 & \quad k = 0
\end{cases}
\end{align*}
and  set 
\begin{align*}
	\lambda_k^{(\alpha)} := \int_0^{k\pi} \frac{\sin (\tau)}{\tau^{\alpha - 1}} \mathrm{d} \tau.
\end{align*}
We remark that the $\lambda_k$ are well-defined for all $k\in\Z$ and $\lambda_\infty^{(\alpha)}  := \lim_{k\rightarrow \infty} \lambda_k^{(\alpha)} < \infty$ (cf.~\cite[Lem.~2.4]{R12}). 
We deduce from \cite[Prop.~2.3]{R12}, by switching from the weak formulation of the Euler-Lagrange equation to the strong one, the following: 

\begin{thm}\label{thm:QFourier}
	For every curve $\gamma \in C^\infty(\RZ, \R^n)$ the term $Q^\alpha \gamma$ is a $C^\infty$-function and its Fourier coefficients, for all $k\in \Z$, are given by
	\begin{align}\label{eq:fouriercoeffQ}
	\widehat{Q^{\alpha} \gamma}(k) = -q_k^{(\alpha)} |k|^{\alpha +1} \hat{\gamma} (k) 
	.\end{align}
	The constants $q_k^{(\alpha)}$ are bounded and satisfy $$0 \leq q_k^{(\alpha)} = \frac{8\pi \lambda_k^{(\alpha)}}{\alpha (\alpha + 1 )(\alpha -1)} + O(\tfrac{1}{k^{1-\alpha}})$$ as $k \rightarrow \infty$.
\end{thm}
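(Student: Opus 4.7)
The plan is to work directly with the principal-value definition of $Q^\alpha\gamma$, verify pointwise existence and smoothness of the resulting function, and then identify its Fourier coefficients mode by mode; the asymptotic behaviour of $q_k^{(\alpha)}$ is treated afterwards as a separate analysis of a one-dimensional integral. For existence of the principal value, a third-order Taylor expansion of $\gamma$ yields
\begin{align*}
2\frac{\gamma(x+w)-\gamma(x)-w\dot\gamma(x)}{w^2}-\ddot\gamma(x) = \tfrac{w}{3}\gamma'''(x) + O(w^2),
\end{align*}
so the leading odd $O(w)$ piece drops out in the symmetric principal-value sense, and the remaining $O(w^2)$ term divided by $|w|^\alpha$ is integrable at the origin because $\alpha<3$. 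Smoothness of $Q^\alpha\gamma$ then follows either by differentiating under the integral (reusing the Taylor argument with higher derivatives of $\gamma$) or a posteriori from the Fourier formula, since $\hat\gamma(k)$ decays faster than any polynomial while the multiplier grows only polynomially in $k$.

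For the Fourier identity I would substitute $\gamma(x)=\sum_j \hat\gamma(j)e^{2\pi i j x}$ into the integrand. The $j$-th mode contributes $\hat\gamma(j)e^{2\pi i jx}$ times
\begin{align*}
m_j(w) := \frac{2(e^{2\pi i j w}-1-2\pi i j w)}{w^2}+4\pi^2 j^2.
\end{align*}
When $m_j(w)/|w|^\alpha$ is integrated over $|w|\le \tfrac12$, the imaginary $\sin$-terms and the linear $-4\pi i j w$ piece are odd in $w$ and vanish in the principal value, leaving a real integrand with numerator $(2\pi j w)^2-2+2\cos(2\pi j w)$. The substitution $\tau = 2\pi w$ then converts the integral precisely into $|j|^{\alpha+1}$ times the defining integral of $q_j^{(\alpha)}$, which yields the claimed Fourier multiplier identity.

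The main obstacle is the asymptotic behaviour of $q_k^{(\alpha)}$ as $k\to\infty$. I would first rewrite it via the change of variable $s=k\tau$ as
\begin{align*}
q_k^{(\alpha)} = 4\pi \int_0^{k\pi}\frac{s^2-2+2\cos s}{s^{2+\alpha}}\,ds;
\end{align*}
its integrand is non-negative (from $2(1-\cos s)\le s^2$), vanishes to fourth order at $s=0$, and is dominated by $s^{-\alpha}$ at infinity with $\alpha>1$, which already gives positivity, uniform boundedness, and the existence of a finite limit $q_\infty^{(\alpha)}$. The identification of the leading behaviour with $\lambda_k^{(\alpha)}$ is obtained by three successive integrations by parts taking $dv$ equal to $s^{-2-\alpha}\,ds$, then $s^{-1-\alpha}\,ds$, then $s^{-\alpha}\,ds$; correspondingly the numerator is differentiated in turn to $2s-2\sin s$, $2-2\cos s$, and $2\sin s$, so the Taylor cancellation at $s=0$ kills all lower boundary contributions while the upper boundary terms at $s=k\pi$ each scale as $k^{1-\alpha}$, and the final integral equals $\frac{2}{\alpha(\alpha-1)}\int_0^{k\pi}\frac{\sin s}{s^{\alpha-1}}\,ds = \frac{2\lambda_k^{(\alpha)}}{\alpha(\alpha-1)}$. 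Collecting the three constants produces the stated leading factor $\tfrac{8\pi}{\alpha(\alpha+1)(\alpha-1)}$ with remainder $O(k^{-(\alpha-1)})$. The delicate point is to order the integrations by parts so that no integrand of the form $\int_0^{k\pi} s^{-\alpha}\,ds$, which would diverge at the origin for $\alpha>1$, is ever generated; once this bookkeeping is done, the theorem follows.
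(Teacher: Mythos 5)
Your route is genuinely different from the paper's and, in one important respect, more self-contained. The paper derives the Fourier multiplier identity indirectly: it first relates $Q^\alpha$ to the bilinear form
$\widetilde Q^\alpha(\gamma,\eta)=\lim_{\varepsilon\downarrow 0}\int_{\RZ}\int_{|w|\in[\varepsilon,\frac12]}\bigl(\tfrac{\langle\gamma(x+w)-\gamma(x),\eta(x+w)-\eta(x)\rangle}{w^2}-\langle\dot\gamma,\dot\eta\rangle\bigr)\tfrac{\mathrm dw\,\mathrm dx}{|w|^\alpha}$
via continuous and discrete integration by parts, then cites Reiter's Proposition~2.3 both for the diagonalisation of $\widetilde Q^\alpha$ and for the asymptotics of $q_k^{(\alpha)}$, and finally matches Fourier coefficients by Plancherel. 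You instead read off the symbol directly by substituting the Fourier series of $\gamma$ mode by mode, discarding the odd part of $m_j(w)$ under the symmetric principal value, and reducing to the one-dimensional integral of $\frac{s^2-2+2\cos s}{s^{2+\alpha}}$; and you re-derive the asymptotics of $q_k^{(\alpha)}$ from scratch. Your three integrations by parts (taking $\mathrm dv=s^{-2-\alpha}\mathrm ds$, then $s^{-1-\alpha}\mathrm ds$, then $s^{-\alpha}\mathrm ds$, with the numerator running through $s^2-2+2\cos s$, $2s-2\sin s$, $2-2\cos s$, $2\sin s$) are correctly ordered: at each step the fourth-, third-, and second-order vanishing of the numerator at $s=0$ annihilates the lower boundary term and keeps the surviving integral absolutely convergent, the upper boundary terms are each $O(k^{1-\alpha})$, and the product of the three prefactors gives exactly $\frac{8\pi}{\alpha(\alpha+1)(\alpha-1)}$. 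This makes your asymptotic argument more complete than the paper's, which only points to Reiter.

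However, there is one loose end you should not gloss over. After discarding the odd part, the multiplier for mode $j$ is $2\int_0^{1/2}\frac{(2\pi jw)^2-2+2\cos(2\pi jw)}{w^{2+\alpha}}\,\mathrm dw$, and your substitution $\tau=2\pi w$ yields $2(2\pi)^{1+\alpha}\int_0^\pi\frac{(j\tau)^2-2+2\cos(j\tau)}{\tau^{2+\alpha}}\,\mathrm d\tau$. With the $q_k^{(\alpha)}$ defined in the paper (with prefactor $\frac{4\pi}{|k|^{\alpha+1}}$), this equals $(2\pi)^\alpha\,q_j^{(\alpha)}|j|^{\alpha+1}$, not $q_j^{(\alpha)}|j|^{\alpha+1}$, and it is manifestly nonnegative while the theorem asserts the multiplier is $-q_k^{(\alpha)}|k|^{\alpha+1}$ with $q_k^{(\alpha)}\ge 0$. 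Saying the substitution gives the result \emph{precisely} therefore skips a real reconciliation: either the normalisation in the transcription of Reiter's $q_k^{(\alpha)}$ is off by $(2\pi)^\alpha$ and a sign, or you are implicitly using a different Fourier convention. The discrepancy does not affect the qualitative conclusion (positivity, boundedness, $|k|^{\alpha+1}$ growth, and the $\lambda_k^{(\alpha)}$ asymptotics are all unchanged up to a harmless constant) nor the downstream use in Corollary~\ref{kor:Ql}, but a clean proof of the stated identity \eqref{eq:fouriercoeffQ} has to make the constant and the sign come out exactly, and you should track them explicitly rather than assert the match.
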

\begin{proof}
By Taylor's expansion up to third order we obtain 
\begin{align}
Q^\alpha \gamma (x) & = \lim_{\varepsilon\downarrow 0} 2  \int_{|w| \in [\varepsilon, \frac{1}{2}]} \frac{w }{|w|^\alpha}   \int_0^1 (1-t)^2 \dddot{\gamma}(x+tw) \mathrm{d}t \mathrm{d} w \nonumber \\
& = \lim_{\varepsilon\downarrow 0} 2  \int_{|w| \in [\varepsilon, \frac{1}{2}]} \frac{w }{|w|^\alpha}   \int_0^1 (1-t)^2 \left(\dddot{\gamma}(x+tw) - \dddot{\gamma}(x)\right) \mathrm{d}t  \mathrm{d} w , \label{eq:Qcont}
\end{align}
from which directly follows that $Q^\alpha$ maps $C^{3,\beta}$ to $L^\infty$ for any $0 < \beta \leq 1$. From the linearity of $Q^\alpha$ we get $\partial^l Q^\alpha \gamma (x) = Q^\alpha\partial^l \gamma(x)$, so $Q^\alpha$ maps $C^{l+3,\beta}$ to $C^{l-1,1}$ for all integers $l\geq 1$, which demonstrates the first part of the Theorem's statement.
 
Now we define the bilinear functional
\begin{align*}
\widetilde Q^\alpha (\gamma,\eta) := \lim_{\varepsilon  \downarrow 0 } \int_{\mathbb R / \mathbb Z} \int_{|w| \in [\varepsilon, \frac 1 2]} \Big( \frac {\langle \gamma(x+w)-\gamma(x),\eta(x+w)-\eta(x)\rangle_{\R^n} }{w^2}- \langle \dot \gamma (x) , \dot \eta (x) \rangle_{\R^n} \Big) \frac{\mathrm{d}w \mathrm{d}x}{|w|^\alpha} .
\end{align*}
for any $\gamma,\eta \in H^{\frac{\alpha+1}{2}}(\mathbb R / \mathbb Z, \mathbb R ^n)$. Applying continuous and discrete integration by parts, gives
\begin{align}
\widetilde Q^\alpha (\gamma,\eta) & = \lim_{\varepsilon  \downarrow 0 } \int_{\mathbb R / \mathbb Z} \int_{|w| \in [\varepsilon, \frac 1 2]} \left( -2 \frac {\langle \gamma(x+w)-\gamma(x),\eta(x)\rangle_{\R^n} }{w^2} - \langle \dot \gamma(x) , \dot \eta(x) \rangle_{\R^n} \right) \frac{\mathrm{d}w \mathrm{d}x}{|w|^\alpha}  \nonumber \\
& = -\lim_{\varepsilon \downarrow 0} \int_{\mathbb R / \mathbb Z} \langle Q^{\alpha,\varepsilon} \gamma(x), \eta(x) \rangle_{\R^n} \mathrm{d}x 
= - \int_{\mathbb R / \mathbb Z} \langle Q^\alpha \gamma(x) ,\eta(x) \rangle_{\R^n} \mathrm{d}x   \label{formula:Qtildecalc}
\end{align}
because $Q^{\alpha,\varepsilon} \gamma$ converges to $Q^\alpha \gamma$ in $L^\infty(\RZ,\R^n)$ as $\epsilon\downarrow 0$ by \eqref{eq:Qcont}. 
Furthermore, by \cite[Prop.~2.3]{R12}, we have that
\begin{align*}
 \widetilde Q^\alpha(\gamma,\eta) = \sum_{k \in \mathbb Z} q_k^{(\alpha)} |k|^{\alpha+1} \hat{\gamma} (k) \hat {\eta}(k)
\end{align*}
and by Plancherel's identity we get 
\begin{align*}
  \int_{\mathbb R / \mathbb Z} \langle Q^\alpha\gamma ,\eta \rangle dx  = \sum_{k \in \mathbb Z} \widehat{Q^\alpha \gamma}(k) \widehat{\eta}(k).
\end{align*}
So by  comparing the Fourier coefficients in \eqref{formula:Qtildecalc} we have
$\widehat{Q^\alpha f}(k) = -q_k^{(\alpha)}|k|^{\alpha +1} \hat{\gamma} (k) $ for all $ k\in \Z.$ The properties of the coefficients $q_k^{(\alpha)}$ directly follow from the proof of \cite[Prop.~2.3]{R12}.
\end{proof}		

By Theorem~\ref{thm:QFourier} we obtain the following essential corollary.

\begin{cor}  \label{kor:Ql}
	There exists a positive constant $ \widetilde{C}$ such that for all curves $\gamma \in C^\infty(\RZ, \R^n)$ and  integers $l\geq 0$ 
	\begin{align*}
	\|\partial^{l+3}\gamma \|_{H^{m+\alpha-2}} \leq \widetilde{C} \|\partial^{l} Q^{\alpha}\gamma\|_{H^m}
	\end{align*}
	holds for any real numbers $m\geq 0$ and $2 < \alpha < 3$.
\end{cor}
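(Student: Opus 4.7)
The plan is to pass to the Fourier side and use the representation $\widehat{Q^\alpha\gamma}(k) = -q_k^{(\alpha)}|k|^{\alpha+1}\hat\gamma(k)$ from Theorem~\ref{thm:QFourier}. Since $Q^\alpha$ is translation-invariant, it commutes with differentiation (as already noted in the proof of Theorem~\ref{thm:QFourier}), so $\partial^l Q^\alpha\gamma = Q^\alpha\partial^l\gamma$, which yields
\begin{align*}
\widehat{\partial^l Q^\alpha\gamma}(k) = (2\pi i k)^l\,\widehat{Q^\alpha\gamma}(k) = -(2\pi i k)^l\,q_k^{(\alpha)}\,|k|^{\alpha+1}\,\hat\gamma(k),
\end{align*}
while $\widehat{\partial^{l+3}\gamma}(k) = (2\pi i k)^{l+3}\hat\gamma(k)$.

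Writing each $H^s$-norm as the defining Fourier sum and comparing term by term, the estimate reduces to a pointwise-in-$k$ inequality. Explicitly, since the $k=0$ summand vanishes on both sides, I would verify that for every $k \in \Z \setminus \{0\}$,
\begin{align*}
(1+k^2)^{m+\alpha-2}(2\pi)^{2(l+3)}|k|^{2(l+3)} \;\leq\; \widetilde C^2\,(1+k^2)^m (2\pi)^{2l}|k|^{2l}\,(q_k^{(\alpha)})^2\,|k|^{2(\alpha+1)}.
\end{align*}
Cancelling the common factor $(1+k^2)^m(2\pi)^{2l}|k|^{2l}$ and applying the bound $(1+k^2)^{\alpha-2} \leq 2^{\alpha-2}|k|^{2(\alpha-2)}$ for $|k| \geq 1$ (which holds because $\alpha-2 > 0$), this reduces, after also dividing by $|k|^{2\alpha+2}$, to the single scalar condition $\widetilde C^{2}\,(q_k^{(\alpha)})^{2} \geq 2^{\alpha-2}(2\pi)^6$.

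Thus the entire argument hinges on establishing a uniform positive lower bound $\inf_{k \in \Z \setminus \{0\}} q_k^{(\alpha)} > 0$, which I regard as the only step requiring actual work. This is most transparent via the substitution $u = |k|\tau$ in the defining integral of $q_k^{(\alpha)}$, which converts it to
\begin{align*}
q_k^{(\alpha)} \;=\; 4\pi\int_0^{|k|\pi}\frac{u^2 - 2 + 2\cos u}{u^{2+\alpha}}\,du;
\end{align*}
the integrand is non-negative (since $2 - 2\cos u \leq u^2$, equivalently $4\sin^2(u/2) \leq u^2$) and strictly positive on a set of positive measure, so the sequence $|k| \mapsto q_k^{(\alpha)}$ is non-decreasing on $\N$, and in particular $q_k^{(\alpha)} \geq q_1^{(\alpha)} > 0$ uniformly in $k \neq 0$. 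Setting $\widetilde C := 2^{(\alpha-2)/2}(2\pi)^3 / q_1^{(\alpha)}$ then closes the argument. Beyond this lower bound, whose essence is already built into Theorem~\ref{thm:QFourier}, no further difficulty arises.
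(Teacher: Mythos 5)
Your proof is correct and follows essentially the same route as the paper: pass to the Fourier side via Theorem~\ref{thm:QFourier}, compare the $H^s$-norms term by term, and reduce everything to a uniform positive lower bound on $q_k^{(\alpha)}$. The only difference is that you explicitly justify $\inf_{k\neq 0}q_k^{(\alpha)}>0$ via the substitution $u=|k|\tau$ and monotonicity in $|k|$, whereas the paper simply asserts positivity of the resulting constant (relying implicitly on the asymptotics of $q_k^{(\alpha)}$ recorded in Theorem~\ref{thm:QFourier}); this is a welcome added detail, not a different method.
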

\begin{proof}
	By considering Theorem~\ref{thm:QFourier}, in particular the form of the Fourier coefficients \eqref{eq:fouriercoeffQ}, the definition of the fractional Sobolev-norm, elementary properties of Fourier coefficients and the simple estimate $(1+k^2)^{\alpha -2} \leq (2|k|)^{2(\alpha -2)}$ for any $k\in \Z  \setminus 0$,  we get 
	\begin{align*}
	\|\partial^{l}Q^\alpha\gamma \|_{H^m}^2 &= \sum_{k\in \Z} (1+|k|^2)^m|\widehat{\partial^{l}Q^\alpha \gamma}(k)|^2 \\
	&= \sum_{k\in \Z} (1+|k|^2)^m (2\pi| k|)^{2l} |\widehat{Q^\alpha \gamma}(k)|^2 \\
	&= \sum_{k\in \Z} (1+|k|^2)^m (2\pi| k|)^{2l} (q_k^{(\alpha)})^2 |k|^{2(\alpha +1)} |\hat{\gamma} (k)|^2\\
	&\geq \inf_{k\in\Z\setminus 0} \{(q_k^{(\alpha)})^2 (2\pi)^{-6} 2^{2-\alpha} \} \sum_{k \in \Z} (1+|k|^2)^{m+\alpha-2} (2\pi| k|)^{2(l+3)}   |\hat{\gamma} (k)|^2 \\
	&= \widetilde{C}^{-2} \sum_{k \in \Z} (1+|k|^2)^{m+\alpha-2} |\widehat{\partial^{(l+3)}\gamma} (k)|^2 \\
	&= \widetilde{C}^{-2}   \|\partial^{(l+3)}\gamma \|_{H^{m+\alpha-2}}^2,
	\end{align*}
	where $\widetilde{C}:= \inf_{k\in\Z\setminus{0}} \{(q_k^{(\alpha)})^2 (2\pi)^{-6} 2^{2-\alpha}\}^{-\frac{1}{2}}$ is a positive constant.
\end{proof}

\section{A fractional leibniz rule and the form of $P^T_{\dot{\gamma}} Q^\alpha\gamma$} \label{sec:bilinearHilberttransform}

Let $\gamma \in C^\infty(\RZ,\R^n)$ be a closed simple curve parametrized by arc-length. 
Recall that in the decomposed first variation of O'Hara's range of energies $\E^\alpha$, for $2 < \alpha < 3$, there appears the orthogonal projection of the main part $Q^\alpha \gamma$, which is given by $P^\perp_{\dot{\gamma}} Q^\alpha\gamma =  Q^\alpha\gamma - P^T_{\dot{\gamma}} Q^\alpha\gamma$. Since we have worked out an estimate for $Q^\alpha \gamma$ in the previous section, it remains to study $P^T_{\dot{\gamma}} Q^\alpha\gamma$, i.e.~the tangential part of $Q^\alpha\gamma$.

In this section we will see that a type of fractional derivative of a product of two functions can help to estimate the tangential part of $Q^\alpha\gamma$. We interpret the resulting estimate as a \emph{fractional Leibniz rule}. 
 
In order to avoid problems coming from the singularities of the integrand, we will work with the truncated functional $Q^{\alpha,\varepsilon}$ for $0 < \varepsilon \leq \frac 12$. By using Taylor's approximation up to second order with remainder term in integral from and 
$\left< \dot{\gamma}(x), \ddot{\gamma}(x) \right> = 0$ for all $x\in \RZ$ together with the bilinearity of the scalar product and $\alpha > 2$, we can write
\begin{align}\label{formula:PTQepsilon}
& \langle Q^{\alpha,\varepsilon}\gamma(x), \dot{\gamma}(x) \rangle_{\R^n} \nonumber\\
& = \int_{|w|\in [\varepsilon, \frac{1}{2}]} \left< 2 \frac{\gamma(x+w)-\gamma(x)-w\dot{\gamma}(x)}{w^2} - \ddot{\gamma}(x) , \dot{\gamma}(x) \right>_{\R^n}  \frac{\mathrm{d}w}{|w|^\alpha} \nonumber\\
& = 2  \int_{|w|\in [\varepsilon, \frac{1}{2}]} \int_0^1 (1-t) \left< \ddot{\gamma}(x+tw) , \dot{\gamma}(x) \right>_{\R^n}  \mathrm{d}t \frac{\mathrm{d}w}{|w|^{\alpha}} \nonumber\\
& = 2  \int_{|w|\in [\varepsilon, \frac{1}{2}]} \int_0^1 (1-t)  \left<\ddot{\gamma}(x+tw), \frac{\dot{\gamma}(x) - \dot{\gamma}(x+tw)}{w}\right>_{\R^n}  \mathrm{d}t \frac{\mathrm{d}w}{w|w|^{\alpha-2}}  \nonumber\\
& = 2 \int_{|w|\in [\varepsilon, \frac{1}{2}]} \int_0^1  \int_0^1 (1-t) (-t) \frac{\left<\ddot{\gamma}(x+tw), \ddot{\gamma}(x+stw)\right>_{\R^n}}{w}  \mathrm{d}s \mathrm{d}t \frac{\mathrm{d}w}{|w|^{\alpha-2}}.
\end{align} 
With $\varepsilon\downarrow 0$ we also get
\begin{align}  
&(P^T_{\dot{\gamma}} Q\gamma)(x) \nonumber\\
& = 4 \lim_{\varepsilon \downarrow 0} \int_{|w|\in [\varepsilon, \frac{1}{2}]} \iint_{[0,1 ]^2} (1-t) (-t) \frac{\left<\ddot{\gamma}(x+tw), \ddot{\gamma}(x+stw)\right>_{\R^n}}{w|w|^{\alpha-2}} \dot{\gamma}(x) \mathrm{d}s \mathrm{d}t \mathrm{d}w. \label{formula:compTQ}
\end{align}
The last terms of \eqref{formula:PTQepsilon} and  \eqref{formula:compTQ} motivate us to introduce the following:

\begin{defn}
	Let $s_1,s_2 \in [0,1]$, $0 < \varepsilon \leq \frac{1}{2} $ and $\beta \in (0,1)$. 
	Then the bilinear singular integral
	 $H_{s_1,s_2,\beta}: C^1 (\RZ,\R) \times C^1 (\RZ,\R) \rightarrow L^\infty (\RZ,\R)$ is given by 
	\begin{align}\label{defn:Hs1s2}
	H_{s_1,s_2,\beta}(f,g) (x) := \lim_{\varepsilon \downarrow 0} \int_{|w| \in [\varepsilon, \frac 12]} \frac{f(x+s_1w)  g(x+s_2w)}{w |w|^\beta} \mathrm{d}w 
	\end{align}
	for all $x\in \RZ$ and, its truncated version
	$H_{s_1,s_2,\beta}^\varepsilon: C^1 (\RZ,\R) \times C^1 (\RZ,\R) \rightarrow L^\infty (\RZ,\R)$ is given by 
	\begin{align}\label{defn:Hs1s2eps}
		& H_{s_1,s_2,\beta}^\varepsilon(f,g) (x) := \int_{|w| \in [\varepsilon, \frac 12]} \frac{f(x+s_1w)  g(x+s_2w)}{w |w|^\beta} \mathrm{d}w
	\end{align} 
	for all $x\in\RZ$.
\end{defn}

If $\beta$ was allowed to vanish, the previous definition would yield the bilinear Hilbert transform. Since we have the factor $|w|^\beta$ for $\beta \in (0,1)$ in the denominator in the definition of the bilinear singular integral \eqref{defn:Hs1s2}, the formula is reminiscent  of a fractional derivative.
 
\begin{lem}\label{lem:Hs1s2cont}
For every $s_1,s_2 \in [0,1]$, $0 < \varepsilon \leq \frac 12$ and  $\beta \in (0,1)$ 
the transforms 	$H_{s_1,s_2,\beta}$ and $H_{s_1,s_2,\beta}^\varepsilon$
are well-defined, continuous and bilinear from $C^1 (\RZ,\R)\times C^1 (\RZ,\R)$  to $L^\infty (\RZ,\R)$.
Furthermore, the truncated bilinear integral $H_{s_1,s_2,\beta}^\varepsilon$ is also continuous from $C (\RZ,\R)\times C (\RZ,\R)$  to $L^\infty (\RZ,\R)$.
\end{lem}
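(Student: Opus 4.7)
The plan is to treat the two assertions separately, with the truncated statement $H_{s_1,s_2,\beta}^\varepsilon \colon C \times C \to L^\infty$ as an easy warm-up and the principal-value statement for $H_{s_1,s_2,\beta}$ requiring the standard odd-symmetry trick to tame the non-absolutely-integrable singularity at $w=0$. Bilinearity is immediate from the definition of the integrals in both cases, so the real content is the boundedness estimates, from which continuity will follow at once via the usual decomposition $H(f_n,g_n)-H(f,g)=H(f_n-f,g_n)+H(f,g_n-g)$.

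For $H_{s_1,s_2,\beta}^\varepsilon$ on $C(\RZ,\R)\times C(\RZ,\R)$, the integrand is bounded away from $w=0$ and the domain is compact, so a crude sup-norm bound
\begin{equation*}
|H_{s_1,s_2,\beta}^\varepsilon(f,g)(x)| \leq \|f\|_\infty \|g\|_\infty \int_{\varepsilon}^{1/2} \frac{2}{u^{1+\beta}}\,\mathrm{d}u \leq C(\varepsilon,\beta)\, \|f\|_\infty \|g\|_\infty
\end{equation*}
suffices. For the second assertion on $C^1 \times C^1$ one needs an estimate that is uniform in $\varepsilon$ so that the limit makes sense. I would exploit that the kernel $w \mapsto 1/(w|w|^\beta)$ is odd on the symmetric domain $\{\varepsilon \leq |w| \leq 1/2\}$; hence $\int_{\varepsilon \leq |w| \leq 1/2} f(x)g(x)\,\mathrm{d}w/(w|w|^\beta)=0$ and we may rewrite
\begin{equation*}
H_{s_1,s_2,\beta}^\varepsilon(f,g)(x) = \int_{\varepsilon \leq |w| \leq 1/2} \frac{f(x+s_1 w)g(x+s_2 w) - f(x)g(x)}{w|w|^\beta}\,\mathrm{d}w.
\end{equation*}
Next I would split the numerator as $(f(x+s_1 w)-f(x))\, g(x+s_2 w) + f(x)\,(g(x+s_2 w)-g(x))$ and apply the mean value theorem to each factor, giving the pointwise bound
\begin{equation*}
|f(x+s_1 w)g(x+s_2 w) - f(x)g(x)| \leq |w|\,\bigl(\|f'\|_\infty \|g\|_\infty + \|f\|_\infty \|g'\|_\infty\bigr).
\end{equation*}

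The integrand is then dominated, uniformly in $\varepsilon$, by $C\,\|f\|_{C^1}\|g\|_{C^1}\, |w|^{-\beta}$, which is integrable on $[-1/2,1/2]$ because $\beta \in (0,1)$. Dominated convergence then yields both the existence of the limit
\begin{equation*}
H_{s_1,s_2,\beta}(f,g)(x) = \int_{|w|\leq 1/2} \frac{f(x+s_1 w)g(x+s_2 w)-f(x)g(x)}{w|w|^\beta}\,\mathrm{d}w
\end{equation*}
and the uniform $L^\infty$-estimate $\|H_{s_1,s_2,\beta}(f,g)\|_\infty \leq C_\beta \|f\|_{C^1}\|g\|_{C^1}$, and similarly for $H_{s_1,s_2,\beta}^\varepsilon$ with a constant independent of $\varepsilon$. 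The bilinearity is read off the integral expression, and joint continuity follows from the standard splitting above together with the uniform boundedness estimate.

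The only subtle point is the non-absolute convergence of the principal value, which is why the odd-symmetry reduction has to be performed inside the truncated integral before taking $\varepsilon \downarrow 0$; I do not expect any genuine obstacle beyond this bookkeeping.
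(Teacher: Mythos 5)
Your proposal is correct and follows essentially the same route as the paper: exploit the odd kernel to subtract $f(x)g(x)/(w|w|^\beta)$ inside the truncated integral, split the numerator into $(f(x+s_1 w)-f(x))g(x+s_2 w)+f(x)(g(x+s_2 w)-g(x))$, bound each difference by $\|f'\|_\infty s_1|w|$ resp.\ $\|g'\|_\infty s_2|w|$ (your mean value theorem is the paper's Lipschitz bound for $C^1$ functions on a compact set), and integrate the resulting $|w|^{-\beta}$ majorant; the crude $\|f\|_\infty\|g\|_\infty$ bound for $H^\varepsilon_{s_1,s_2,\beta}$ on $C\times C$ is likewise identical. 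The only cosmetic difference is that you invoke dominated convergence explicitly to pass to the limit, while the paper simply records the uniform integrand bound, but the estimate and its use are the same.
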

\begin{proof}
	It is easy to see by the linearity of the integral in \eqref{defn:Hs1s2} that $H_{s_1,s_2,\beta}$ is indeed linear in both components. The function $H_{s_1,s_2,\beta}$ is also well-defined for every $f,g \in C^1 (\RZ,\R)$, which we can see by adding a zero to the definition of the bilinear singular integral \eqref{defn:Hs1s2} as
	\begin{align*}
	H_{s_1,s_2,\beta}(f,g)(x)= \lim_{\varepsilon \downarrow 0} \int_{|w|\in [\varepsilon, \frac{1}{2}]} \frac{f(x+s_1w)  g(x+s_2w)}{w|w|^\beta} - \frac{f(x)  g(x)}{w|w|^\beta}  \mathrm{d}w,
	\end{align*}
	by inserting a second zero and by the Lipschitz-continuity of continuously differentiable functions on a compact set such that
	\begin{align}\label{lem:Hs1s2}
	& \left|H_{s_1,s_2,\beta}(f,g)(x)\right| \nonumber\\
	& \leq \lim_{\varepsilon \downarrow 0} \int_{|w|\in [\varepsilon, \frac{1}{2}]} \frac{|f(x+s_1w)  g(x+s_2w) - f(x)g(x+s_2w) +f(x)g(x+s_2w) -f(x)  g(x)|}{|w|^{1+\beta}}  \mathrm{d}w \nonumber\\
	& \leq \lim_{\varepsilon \downarrow 0} \int_{|w|\in [\varepsilon, \frac{1}{2}]} \frac{|f(x+s_1w) - f(x)| | g(x+s_2w)| +  |f(x)| |g(x+s_2w)- g(x)|}{|w|^{1+\beta}}  \mathrm{d}w \nonumber\\
	& \leq \lim_{\varepsilon \downarrow 0} \int_{|w|\in [\varepsilon, \frac{1}{2}]} \frac{\|f'\|_\infty s_1|w| \|g\|_\infty +  \|f\|_\infty \|g'\|_\infty s_2 |w|}{|w|^{1+\beta}}  \mathrm{d}w \nonumber\\
	& \leq C (s_1,s_2,\beta)  \|f\|_{C^1} \|g\|_{C^1}  < \infty
	\end{align}
	for some positive finite constant $C(s_1,s_2,\beta)$. Hence we can deduce from \eqref{lem:Hs1s2} that $H_{s_1,s_2,\beta}$ is continuous. 
	We get the same properties for $H_{s_1,s_2,\beta}^\varepsilon$ for any 	$0<\varepsilon\leq \frac 12$
	by transfering the previous arguments. Additionally, since we cut off the singularity in the definition of $H_{s_1,s_2,\beta}^\varepsilon$, the continuity of  $H_{s_1,s_2,\beta}^\varepsilon$ from $C (\RZ,\R)\times C (\RZ,\R)$  to $L^\infty (\RZ,\R)$ follows by the following elementary estimates 
	\begin{align*}
	 \left|H_{s_1,s_2,\beta}^\varepsilon(f,g)(x)\right| & = \left| \int_{|w|\in [\varepsilon, \frac{1}{2}]} \frac{f(x+s_1w)  g(x+s_2w)}{w|w|^\beta} - \frac{f(x)  g(x)}{w|w|^\beta}  \mathrm{d}w \right| \\
	& \leq \int_{|w|\in [\varepsilon, \frac{1}{2}]} \frac{|f(x+s_1w)| | g(x+s_2w)| +  |f(x)| |g(x)|}{|w|^{1+\beta }}  \mathrm{d}w \\
	& \leq  \int_{|w|\in [\varepsilon, \frac{1}{2}]} \frac{\|f\|_\infty \|g\|_\infty +  \|f\|_\infty \|g\|_\infty }{|w|^{1+\beta}}  \mathrm{d}w \\
	& \leq C (\beta, \varepsilon)  \|f\|_{L^\infty} \|g\|_{L^\infty}  < \infty
	\end{align*}
	for all $x\in [0,1]$ and some constant $0 < C(\beta, \varepsilon) < \infty$. 
\end{proof}

We next estimate the truncated bilinear integral \eqref{defn:Hs1s2eps} 
that leads to a new kind of fractional Leibniz rule.

\begin{thm}\label{thm:BilinearHT}
	Let $m> \frac{1}{2}$, $0 < \varepsilon \leq \frac{1}{2} $, $s_1,s_2 \in [0,1]$ and $\beta \in (0,1)$. Then there exists a positive constant $C_H = C_H(m, \beta) < \infty$ independent of $\varepsilon$ such that 
	\begin{align}\label{formula:BilinearHT}
	\|H_{s_1,s_2,\beta}^\varepsilon(f,g)\|_{H^m} \leq C_H \|f\|_{H^{m+ \beta}} \|g\|_{H^{m+ \beta }}
	\end{align}
	for all 
	$f,g \in  C^\infty(\RZ,\R^n)$.
\end{thm}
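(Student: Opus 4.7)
The estimate is a Fourier multiplier statement in disguise, so I would compute the symbol first. Expanding $f = \sum_k \hat f(k)e^{2\pi ikx}$ and $g = \sum_l \hat g(l)e^{2\pi ilx}$, substituting into \eqref{defn:Hs1s2eps}, and interchanging the (absolutely convergent, since $f,g\in C^\infty$) Fourier series with the integral over the bounded region $\{|w|\in[\varepsilon,\tfrac12]\}$ identifies $H^\varepsilon_{s_1,s_2,\beta}$ as a bilinear Fourier multiplier
\begin{align*}
\widehat{H^\varepsilon_{s_1,s_2,\beta}(f,g)}(j) \;=\; \sum_{k+l=j} m_\varepsilon(s_1 k+s_2 l)\,\hat f(k)\,\hat g(l),
\end{align*}
whose symbol, after dropping the cosine piece by oddness of $w\mapsto 1/(w|w|^\beta)$, equals
\begin{align*}
m_\varepsilon(\xi) \;=\; 2i\int_\varepsilon^{1/2} \frac{\sin(2\pi\xi w)}{w^{1+\beta}}\,dw.
\end{align*}

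\textbf{Key multiplier bound.} The heart of the proof, and the step I expect to be the main obstacle, is the pointwise estimate
\begin{align*}
|m_\varepsilon(\xi)| \;\le\; C_\beta\,|\xi|^\beta \qquad \text{uniformly in } \varepsilon\in(0,\tfrac12] \text{ and } \xi\in\R.
\end{align*}
I would prove it by the change of variables $u=2\pi|\xi|w$, which rescales the symbol to
\begin{align*}
|m_\varepsilon(\xi)| \;=\; 2\,(2\pi|\xi|)^\beta\,\Bigl|\int_{2\pi|\xi|\varepsilon}^{\pi|\xi|} \frac{\sin u}{u^{1+\beta}}\,du\Bigr|,
\end{align*}
and then observe that $\sin u/u^{1+\beta}$ is absolutely integrable on $(0,\infty)$: near $0$ one uses $|\sin u|\le u$ together with $\beta<1$ to get integrability of $u^{-\beta}$, while at infinity integrability comes from $1+\beta>1$. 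The partial integral is therefore uniformly bounded on every subinterval of $(0,\infty)$, independently of $\varepsilon$, which gives the required scaling. Both endpoints of the hypothesis $\beta\in(0,1)$ are genuinely used here.

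\textbf{Passage to the $H^m$ estimate.} Since $s_1,s_2\in[0,1]$ and $t\mapsto t^\beta$ is subadditive,
\begin{align*}
|s_1 k + s_2 l|^\beta \;\le\; (|k|+|l|)^\beta \;\le\; |k|^\beta + |l|^\beta,
\end{align*}
so the multiplier bound above yields
\begin{align*}
\bigl|\widehat{H^\varepsilon(f,g)}(j)\bigr| \;\le\; C_\beta\!\!\sum_{k+l=j}\!\!\bigl(|k|^\beta+|l|^\beta\bigr)\,|\hat f(k)|\,|\hat g(l)| \;=\; C_\beta\bigl(A^{(1)}_j + A^{(2)}_j\bigr).
\end{align*}
I would then recognise each $A^{(i)}_j$ as a Fourier coefficient of a product of auxiliary functions. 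Setting
\begin{align*}
F(x) := \sum_{k\in\Z} |k|^\beta\,|\hat f(k)|\,e^{2\pi ikx}, \qquad G(x) := \sum_{l\in\Z} |\hat g(l)|\,e^{2\pi ilx},
\end{align*}
Plancherel gives $\sum_j (1+j^2)^m (A^{(1)}_j)^2 = \|FG\|_{H^m}^2$. Because $m>\tfrac12$, the Banach algebra property \eqref{prop:Banachalgebra} delivers $\|FG\|_{H^m}\le C_m\|F\|_{H^m}\|G\|_{H^m}$, and a direct comparison of Fourier coefficients shows $\|F\|_{H^m}\le\|f\|_{H^{m+\beta}}$ and $\|G\|_{H^m}=\|g\|_{H^m}\le\|g\|_{H^{m+\beta}}$. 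The symmetric choice of auxiliary functions handles $A^{(2)}_j$, giving $\sum_j(1+j^2)^m(A^{(2)}_j)^2 \le C_m^2\|f\|_{H^m}^2\|g\|_{H^{m+\beta}}^2$. Summing the two contributions via the triangle inequality yields the asserted estimate with $C_H=C_H(m,\beta)$ independent of $\varepsilon$, $s_1$, and $s_2$.
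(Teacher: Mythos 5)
Your proof is correct and follows essentially the same route as the paper: both compute the bilinear Fourier multiplier symbol, bound it by $C_\beta|\xi|^\beta$ via the change of variables $u=2\pi|\xi|w$ together with boundedness of $\int_0^\infty |\sin u|\,u^{-1-\beta}\,\mathrm{d}u$ (where both endpoints of $\beta\in(0,1)$ matter), distribute the $\beta$-weight across the two Fourier factors, and close by a convolution estimate in Fourier space. The only differences are organizational: the paper first approximates $f,g$ by Fourier partial sums $p_n, q_n$ and passes to the limit $n\to\infty$ (you interchange sum and integral directly, which is legitimate for $C^\infty$ data), and it concludes with Young's inequality $\ell^2*\ell^1\to\ell^2$ plus the $\ell^1$-Sobolev bound $\|\hat f\|_{\ell^1}\lesssim\|f\|_{H^m}$ for $m>\tfrac12$, whereas you package the same content as the Banach algebra property \eqref{prop:Banachalgebra} of $H^m$.
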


For the proof we will identify, for any $p \in [1,\infty)$, the sequence space $\ell^p$ by
\begin{align*}
\ell^p := \{x = (x_k)_{k\in\Z} \in \C^\Z \ | \ \|x\|_{\ell^p}:= (\textstyle \sum_{k\in\Z} |x_k|^p)^{\frac{1}{p}} < \infty \}
\end{align*}
and use the following lemmata.

\begin{lem}[{Young's inequality}]\label{lem:Young}
	Let $p,q,r \geq 1$ such that $\frac{1}{p} + \frac{1}{q}=\frac{1}{r}+1$. For sequences $x\in \ell^p$ and $y\in  \ell^q$ 
the convolution 
 $(x*y)(k):= \sum_{n\in \Z} x(n) y(k-n)$ is such that  $x*y \in \ell^r$ and
	\begin{align*}
	\|x*y\|_{\ell^r} \leq \|x\|_{\ell^p} \|y\|_{\ell^q} .
	\end{align*}
\end{lem}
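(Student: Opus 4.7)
The plan is to reduce Young's convolution inequality to Hölder's inequality via a three-factor splitting of the summand, treating the boundary cases separately. The special cases are immediate: when $r=\infty$ the hypothesis forces $p,q$ to be conjugate exponents and $|(x*y)(k)| \leq \|x\|_{\ell^p}\|y\|_{\ell^q}$ is just pointwise Hölder applied for each fixed $k$, while when $p=1$ (so $r=q$) the inequality $\|x*y\|_{\ell^q} \leq \|x\|_{\ell^1}\|y\|_{\ell^q}$ follows from Minkowski's inequality applied to the series representation of $x*y$ as a weighted sum of translates of $y$, together with the fact that translation preserves the $\ell^q$-norm; the case $q=1$ is symmetric. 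This leaves the core range $1<p,q,r<\infty$.

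For that range, I introduce auxiliary exponents $a,b\in(1,\infty)$ defined by $\tfrac{1}{a}:=\tfrac{1}{p}-\tfrac{1}{r}$ and $\tfrac{1}{b}:=\tfrac{1}{q}-\tfrac{1}{r}$; one checks from $\tfrac{1}{p}+\tfrac{1}{q}=1+\tfrac{1}{r}$ that $\tfrac{1}{r}+\tfrac{1}{a}+\tfrac{1}{b}=1$, with $a,b$ finite precisely because $p,q>1$. The key splitting is
\begin{equation*}
|x(n)\,y(k-n)| = \bigl(|x(n)|^p|y(k-n)|^q\bigr)^{1/r} \cdot |x(n)|^{1-p/r} \cdot |y(k-n)|^{1-q/r},
\end{equation*}
engineered so that raising each factor to the matching exponent $r$, $a$, $b$ produces manageable sums. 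Applying Hölder's inequality for three sequences yields, for each $k\in\Z$,
\begin{equation*}
|(x*y)(k)| \leq \Bigl(\sum_{n\in\Z}|x(n)|^p|y(k-n)|^q\Bigr)^{1/r}\|x\|_{\ell^p}^{1-p/r}\|y\|_{\ell^q}^{1-q/r}.
\end{equation*}
Raising to the $r$-th power, summing over $k$, and swapping the order of summation in the remaining double sum (legal by Tonelli for nonnegative terms), the inner sum over $k$ of $|y(k-n)|^q$ equals $\|y\|_{\ell^q}^q$ by translation invariance, so the factor $\|x\|_{\ell^p}^p\|y\|_{\ell^q}^q$ emerges. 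Combining exponents gives $\|x*y\|_{\ell^r}^r \leq \|x\|_{\ell^p}^r\|y\|_{\ell^q}^r$, and the conclusion follows on taking $r$-th roots.

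There is no substantial obstacle here: the argument reduces to careful bookkeeping of exponents, the only tiny verification being that the three factors in the splitting reconstitute $|x(n)y(k-n)|$, which amounts to $\tfrac{p}{r}+(1-\tfrac{p}{r})=1$ and analogously for $q$. The three-factor Hölder inequality itself is immediate by induction from the two-factor case, and Tonelli's theorem applies unconditionally because all summands are nonnegative.
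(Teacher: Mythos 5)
The paper states this lemma as a standard fact and gives no proof of its own, so there is nothing to compare against; your argument is the classical one and it is correct. The reduction of the core case $1<p,q,r<\infty$ to three-factor H\"older via the splitting $|x(n)y(k-n)|=(|x(n)|^p|y(k-n)|^q)^{1/r}\,|x(n)|^{1-p/r}\,|y(k-n)|^{1-q/r}$ with exponents $r,a,b$ satisfying $\tfrac1r+\tfrac1a+\tfrac1b=1$ checks out (one has $a=q'$ and $b=p'$, so $a,b\in(1,\infty)$ exactly when $p,q\in(1,\infty)$, and $r\geq p,q$ guarantees the exponents $1-p/r$, $1-q/r$ are nonnegative), the Tonelli step is legitimate, and the boundary cases $p=1$ (Minkowski) and $r=\infty$ (pointwise H\"older) are handled correctly; the latter is in fact outside the paper's setting, since $\ell^r$ is only defined there for $r\in[1,\infty)$.
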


\begin{lem}[{Sobolev-type inequality \cite[Lem.~5.5]{BV19}}]\label{lem:Sobolev}
	Let $f\in H^m(\RZ, \R^n)$ for $m> \frac{1}{2}$. Then $\hat{f}:= (\hat{f}(k))_{k\in\Z} \in \ell^1$ and there exists a positive constant $C_0 < \infty$ such that
	\begin{align*}
	\|\hat{f}\|_{\ell^1} \leq C_0 \|f\|_{H^m}.
	\end{align*}
\end{lem}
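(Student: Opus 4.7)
The plan is to obtain the $\ell^1$-bound on the Fourier coefficients by a weighted Cauchy--Schwarz inequality, splitting the weight $(1+k^2)^{0}=(1+k^2)^{m/2}(1+k^2)^{-m/2}$ so that the Sobolev-weighted part reproduces the $H^m$-norm while the residual weight produces a convergent series precisely when $m>\tfrac12$.

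Concretely, I would first write, for each coordinate of $\hat f(k)\in\C^n$,
\begin{equation*}
\sum_{k\in\Z}|\hat f(k)|
=\sum_{k\in\Z}(1+k^2)^{m/2}|\hat f(k)|\cdot(1+k^2)^{-m/2},
\end{equation*}
and then apply the Cauchy--Schwarz inequality in $\ell^2(\Z)$ to obtain
\begin{equation*}
\|\hat f\|_{\ell^1}
\leq\Bigl(\sum_{k\in\Z}(1+k^2)^{m}|\hat f(k)|^2\Bigr)^{1/2}
\Bigl(\sum_{k\in\Z}(1+k^2)^{-m}\Bigr)^{1/2}
=\|f\|_{H^m}\Bigl(\sum_{k\in\Z}(1+k^2)^{-m}\Bigr)^{1/2}.
\end{equation*}
The first factor is exactly the scalar $H^m$-norm by the definition given in Section~\ref{sec:fractionalsobolevspaces}, so it remains to show that the second factor is finite.

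The only real verification is the convergence of $\sum_{k\in\Z}(1+k^2)^{-m}$. I would handle this by comparing the summand with $|k|^{-2m}$ for $|k|\geq 1$ (using $(1+k^2)^{-m}\leq |k|^{-2m}$) and invoking the standard $p$-series criterion: $\sum_{k\geq 1}k^{-2m}<\infty$ iff $2m>1$, i.e.\ $m>\tfrac12$, which is precisely the hypothesis. Setting
\begin{equation*}
C_0:=\Bigl(\sum_{k\in\Z}(1+k^2)^{-m}\Bigr)^{1/2}<\infty
\end{equation*}
then yields $\|\hat f\|_{\ell^1}\leq C_0\|f\|_{H^m}$, as required.

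For the vector-valued statement $f\in H^m(\RZ,\R^n)$, I would simply apply the scalar estimate component-wise (or use $|\hat f(k)|\leq\sum_{j=1}^n|\hat{f_j}(k)|$), absorbing the resulting dimensional factor into $C_0$; no further idea is needed. There is no genuine obstacle in this argument beyond being careful that $C_0$ depends only on $m$ (and $n$), not on $f$; the sharpness of the threshold $m>\tfrac12$ is dictated by the convergence of the $p$-series and is exactly the standard Sobolev embedding $H^m(\RZ)\hookrightarrow C(\RZ)$ rephrased at the level of Fourier coefficients.
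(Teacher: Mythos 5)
Your argument is correct and is the standard proof of this lemma; the paper itself gives no proof but cites \cite[Lem.~5.5]{BV19}, where exactly this Cauchy--Schwarz splitting of the weight $(1+k^2)^{m/2}(1+k^2)^{-m/2}$ combined with the $p$-series criterion for $m>\tfrac12$ is used. One minor simplification: since $\|f\|_{H^m}^2=\sum_k(1+k^2)^m|\hat f(k)|^2$ with $|\cdot|$ already the Euclidean norm on $\C^n$, the vector-valued case needs no component-wise reduction and no dimensional factor in $C_0$.
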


\begin{proof}[Proof of Theorem~\ref{thm:BilinearHT}] 
	Let $f,g \in  C^\infty(\RZ,\R)\subseteq H^m(\RZ,\R)$. Because of $m > \frac{1}{2}$ and \cite[Theor.~8.2]{DNPV12}, the partial sums of the Fourier series, i.e. $	p_n (x) := \sum_{k=-n}^{n} \hat{f}(k) e^{-2\pi i k x}$ and $q_n (x):=\sum_{k=-n}^{n} \hat{g}(k) e^{-2\pi i k x} $ for any $n\in\N_0$ and $x\in \RZ$, converge uniformly to the functions $f$ and $g$, that means
	\begin{align}\label{formula:apprfg}
	\begin{split}
	\|f-p_n \|_\infty  \rightarrow 0 \textnormal{ and } \|g-q_n \|_\infty \rightarrow 0 \textnormal{ as } n\rightarrow \infty.
	\end{split}		
	\end{align}
	Note that since $f,g \in  C^\infty(\RZ,\R)$, also $p_n, q_n \in C^\infty (\RZ, \R)$ holds.

	Our approach is to firstly prove the estimate \eqref{formula:BilinearHT} for the approximating functions $p_n$ and $q_n$ and secondly derive the actual statement of Theorem~\ref{thm:BilinearHT} by passing to the limit $n\rightarrow \infty$.
	We start by interchanging the integrals 
	two times due to $\varepsilon > 0$ and taking account of the fact that the Fourier coefficients of a product of two functions are the convolution of their Fourier coefficients, to gain
	\begin{align}\label{formula:pnqnepsilonI}
	& \widehat{H_{s_1,s_2,\beta}^\varepsilon(p_n,q_n)}(k) \nonumber\\
	 & = \int_0^1 \int_{|w|\in [\varepsilon, \frac{1}{2}]} \frac{p_n(x+s_1w)  q_n(x+s_2w)}{w|w|^\beta} e^{-2\pi i k x} \mathrm{d}w \mathrm{d}x  \nonumber\\
	& = \int_{|w|\in [\varepsilon, \frac{1}{2}]} \int_0^1  p_n(x+s_1w)  q_n(x+s_2w) e^{-2\pi i k x}  \mathrm{d}x \frac{\mathrm{d}w }{w|w|^\beta} \nonumber \\
	& = \int_{|w|\in [\varepsilon, \frac{1}{2}]}  \sum_{l\in\Z} \widehat{p_n}(l) e^{2\pi ils_1 w} \widehat{q_n}(k-l) e^{2\pi i(k-l)s_2 w} \frac{\mathrm{d}w }{w|w|^\beta} \nonumber \\ 
	& = 
	\begin{cases}
	0       & \quad \text{if } |k| > 2n \\
	\int_{|w|\in [\varepsilon, \frac{1}{2}]}  \sum_{l=-n}^{n} e^{2\pi i(ls_1+ (k-l)s_2) w} \widehat{f}(l) \widehat{g}(k-l) \frac{\mathrm{d}w }{w|w|^\beta}  & \quad \text{if } |k| \leq 2n
	\end{cases} 
	.\end{align}
	Now we focus on the non-trivial Fourier coeffients of $H_{s_1,s_2,\beta}^\varepsilon(p_n,q_n)$, that means on the indices $|k|\leq 2n$. By defining $\phi_{l,k} := 2\pi (ls_1 +(k-l)s_2) \in \R$, substitution and interchanging integral and sum, we get 
	\begin{align}\label{formula:pnqnepsilonII}
	& \int_{|w|\in [\varepsilon, \frac{1}{2}]}  \sum_{l=-n}^{n} e^{iw\phi_{l,k} } \widehat{f}(l) \widehat{g}(k-l) \frac{\mathrm{d}w }{w|w|^\beta} \nonumber\\ 
	&\qquad \qquad= \int_\varepsilon^{\frac{1}{2}}  \sum_{l=-n}^{n} \frac{e^{i w\phi_{l,k}} - e^{-i w \phi_{l,k}}}{w|w|^\beta} \widehat{f}(l) \widehat{g}(k-l) \mathrm{d}w \nonumber\\
	&\qquad\qquad = \int_\varepsilon^{\frac{1}{2}}  \sum_{l=-n}^{n}\frac{2i \sin (w \phi_{l,k})}{w^{1+\beta}} \widehat{f}(l) \widehat{g}(k-l) \mathrm{d}w \nonumber\\
	&\qquad\qquad =  \sum_{l=-n}^{n} \widehat{f}(l) \widehat{g}(k-l) 2i \int_\varepsilon^{\frac{1}{2}} \frac{ \sin (w \phi_{l,k})}{w^{1+\beta}}  \mathrm{d}w \nonumber\\
	&\qquad\qquad = \sum_{l=-n}^{n} \widehat{f}(l) \widehat{g}(k-l) 2i \phi_{l,k}^\beta \int_{\phi_{l,k}\varepsilon}^{\frac{\phi_{l,k}}{2}} \frac{ \sin (\widetilde{w})}{\widetilde{w}^{1+\beta}}  \mathrm{d}\widetilde{w} \nonumber\\
	&\qquad\qquad = \sum_{l=-n}^{n} \widehat{f}(l) \widehat{g}(k-l) 2i \phi_{l,k}^\beta (\Si_\beta\left(\tfrac{\phi_{l,k}}{2}\right) - \Si_\beta(\phi_{l,k}\varepsilon)),
	\end{align}
	where $\Si_\beta (x) := \int_0^x \frac{\sin (t)}{t^{1+\beta}}dt$. 
	Hence we can deduce  from the previous computations in \eqref{formula:pnqnepsilonI} and \eqref{formula:pnqnepsilonII} 
	\begin{align}\label{formula:Hfouriercoeffpnqn}
	\left|\widehat{H_{s_1,s_2,\beta}^\varepsilon(p_n,q_n)}(k) \right| 
	& \leq \sum_{l=-n}^{n} |\widehat{f}(l)| | \widehat{g}(k-l)| 2 |\phi_{l,k}|^\beta \left(\left|\Si_\beta\left(\frac{\phi_{l,k}}{2}\right) \right| + \left| \Si_\beta(\phi_{l,k}\varepsilon) \right| \right) \nonumber\\
	& \leq \tilde{M} \sum_{l=-n}^{n} |\widehat{f}(l)| | \widehat{g}(k-l)| |\phi_{l,k}|^\beta, 
	\end{align}
	where $\tilde{M} := 4 \sup_{\phi \in [0,\infty [} \Si_\beta (\phi) <\infty$ due to \cite[Lem.~2.4]{R12}. By \eqref{formula:Hfouriercoeffpnqn} and
	\begin{align*}
	|\phi_{k,l}|^\beta = |2\pi (ls_1 +(k-l)s_2)|^\beta \leq 2 \pi  |l|^{\beta} |k-l|^{\beta} \leq 2\pi (l^2 +1)^{\frac{\beta}{2}} ((k-l)^2+1)^{\frac{\beta}{2}},
	\end{align*}
	we conclude 
	\begin{align}\label{formula:Hfouriercoeffpnqnfinal}
		\left|\widehat{H_{s_1,s_2,\beta}^\varepsilon(p_n,q_n)}(k) \right| \leq & \tilde{M} \sum_{l=-n}^{n} |\widehat{f}(l)| | \widehat{g}(k-l)| |\phi_{l,k}|^\beta  \nonumber\\
		\leq & M \sum_{l=-n}^{n} (l^2 +1)^{\frac{\beta}{2}}  |\widehat{f}(l)| ((k-l)^2+1)^{\frac{\beta}{2}} | \widehat{g}(k-l)| \nonumber\\
		= & M \sum_{l=-n}^{n} (l^2 +1)^{\frac{\beta}{2}}  |\widehat{p_n}(l)| ((k-l)^2+1)^{\frac{\beta}{2}} | \widehat{q_n}(k-l)|,
	\end{align}
	where $M:= 2\pi \tilde{M} $. In addition, we obtain by \eqref{formula:Hfouriercoeffpnqnfinal} and the elementary estimates
	\begin{align*}
		(k^2+1)^{\frac{m}{2}} \leq 2^m \left((l^2+1)^{\frac{m}{2}} + ((k-l)^2+1)^{\frac{m}{2}} \right),
	\end{align*}
	which are an immediate consequence of $|k| \leq 2 \max\{|l|, |k-l|\}$, that
	\begin{align}\label{formula:Hs1s2decompose}	
&	\left|(k^2 + 1)^{\frac{m}{2}}\widehat{H_{s_1,s_2,\beta}^\varepsilon(p_n,q_n)}(k) \right|  \nonumber \\
	 &\qquad \leq  MC(m) \sum_{l\in\Z} (l^2 + 1)^{\frac{m+\beta}{2}} \left|\widehat{p_n}(l)\right|  ((k-l)^2+1)^{\frac{\beta}{2}} \left| \widehat{q_n}(k-l) \right| \nonumber \\
	&\qquad\qquad     + MC(m) \sum_{l\in\Z}  (l^2 +1)^{\frac{\beta}{2}}  \left|\widehat{p_n}(l)\right| ((k-l)^2 + 1)^{\frac{m+\beta}{2}} \left| \widehat{q_n}(k-l) \right|,
	\end{align}
	where the positive constant $C(m) < \infty$ only depends on the parameter $m$. 
	This leads us to the idea to set the following series component-wise for all $k\in\Z$ and any $\lambda >0$
	\begin{align*}
	p_n^\lambda(k) &:= (k^2 + 1)^{\frac{m+\lambda}{2}} \left|\widehat{p_n}(k)\right| \textnormal{ and } \\
	q_n^\lambda(k) &:= (k^2 + 1)^{\frac{m+\lambda}{2}} \left|\widehat{q_n}(k)\right|
	\end{align*}
	such that we can rewrite \eqref{formula:Hs1s2decompose} as
	\begin{align*}
	\left|(k^2 + 1)^{\frac{m}{2}}\widehat{H_{s_1,s_2,\beta}^\varepsilon(p_n,q_n)}(k) \right| \leq MC(m) \left( (p_n^\beta*q_n^0) (k) +  (p_n^0*q_n^\beta ) (k)\right).
	\end{align*}
	Finally, we obtain the desired estimate \eqref{formula:BilinearHT} for $p_n$ and $ q_n$ by applying Lemma~\ref{lem:Young}, Lemma~\ref{lem:Sobolev}, and Sobolev's embedding theorem to estimate 
	\begin{align}\label{formula:mainestimatepnqn}
	\|H_{s_1,s_2,\beta}^\varepsilon(p_n,q_n)\|_{H^m} & = \|( (k^2 + 1)^{\frac{m}{2}}\widehat{H_{s_1,s_2}^\varepsilon(p_n,q_n)}(k))_{k\in\Z} \|_{\ell^2} \nonumber\\
	& \leq MC(m) \left( \| ((p_n^\beta*q_n^0) (k))_{k\in\Z} \|_{\ell^2}  + \|((p_n^0*q_n^\beta ) (k))_{k\in\Z} \|_{\ell^2} \right) \nonumber\\
	& \leq MC(m) \left(\|p_n^\beta\|_{\ell^2} \|q_n^0\|_{\ell^1} +  \|p_n^0\|_{\ell^1} \|q_n^\beta\|_{\ell^2} \right) \nonumber\\
	& \leq MC(m)C_0 \left(\|p_n\|_{H^{m+\beta}} \|q_n\|_{H^m} + \|p_n\|_{H^m} \|q_n\|_{H^{m+\beta}}\right) \nonumber\\
	& = C_H \|p_n\|_{H^{m+\beta}} \|q_n\|_{H^{m+\beta}} 
	,\end{align}
	where $C_H:= 2MC(m)C_0$ is a constant only depending on $m$ and $\beta$. 
	
	Finally, we get the statement \eqref{formula:BilinearHT} for $f$ and $g$ by passing to the limit $n\rightarrow \infty$. In particular, by using \eqref{formula:mainestimatepnqn}, we find  for any integer $n\geq 0$ that
	\begin{align}
	\|H_{s_1,s_2,\beta }^\varepsilon(p_{n},q_{n})\|_{H^m} & \leq C_H \|p_{n}\|_{H^{m+\beta}} \|q_{n}\|_{H^{m+\beta}} \nonumber\\
	&  \leq C_H \|f\|_{H^{m+\beta}} \|g\|_{H^{m+\beta}}  \label{formula:Hfg}
	\end{align}
	By  uniform convergence and by 	Lemma~\ref{lem:Hs1s2cont} we find that 
	\begin{align*}
	& \|H_{s_1,s_2,\beta}^\varepsilon(f,g) - H_{s_1,s_2,\beta}^\varepsilon(p_n,q_n)\|_\infty \\
	&\quad \leq \|H_{s_1,s_2,\beta}^\varepsilon(f,g) - H_{s_1,s_2,\beta}^\varepsilon(p_n,g)\|_\infty + \|H_{s_1,s_2,\beta}^\varepsilon(p_n,g) - H_{s_1,s_2,\beta}^\varepsilon(p_n,q_n)\|_\infty \\
	&\quad = \|H_{s_1,s_2,\beta}^\varepsilon(f-p_n,g)\|_\infty + \|H_{s_1,s_2,\beta}^\varepsilon(p_n,g-q_n)\|_\infty \\
	&\quad \leq C(\beta,\varepsilon) 
	\|f-p_n\|_\infty
	\|g\|_\infty + C(\beta, \varepsilon) \|p_n\|_\infty 
	\|g-q_n\|_\infty \\
	&\quad \to 0
	\end{align*}
	as $n\to \infty$, since 
	$\|f-p_n\|_\infty \to 0$ and $ \|g-q_n\|_\infty \to 0$ as $n\to \infty$ by our assumption \eqref{formula:apprfg}.	
	Moreover, by the uniqueness of the Fourier coefficients of $ H_{s_1,s_2,\beta}^\varepsilon(p_n,q_n) \in H^m (\RZ,\R) \subseteq L^2(\RZ,\R)$ we have, for all $k\in\Z$, that
	\begin{align*}
	\left|\widehat{H_{s_1,s_2,\beta}^\varepsilon(f,g)}(k)- \widehat{H_{s_1,s_2,\beta}^\varepsilon(p_n,q_n)}(k)\right| 
	\to 0 
	\end{align*}
	as $n\to \infty$. 
	Thus for all positive integers $N \geq 1$ we find that 
		\begin{align*}
	&\Big| \Big( \sum_{|k|\leq N} (1+|k|^2)^m|\widehat{H_{s_1,s_2,\beta}^\varepsilon(p_n,q_n)}(k)|^2 \Big)^{\frac{1}{2}} 
	\\	&\hspace{3cm} 
	- \Big( \sum_{|k|\leq N} (1+|k|^2)^m|\widehat{H_{s_1,s_2,\beta}^\varepsilon(f,g)}(k)|^2 \Big)^{\frac{1}{2}} \Big| 
	\to 0
	\end{align*}
	as $n\to\infty$. 
	Then   by \eqref{formula:Hfg} we conclude that 
	\begin{align*}
	\Big( \sum_{|k|\leq N} (1+|k|^2)^m|\widehat{H_{s_1,s_2,\beta}^\varepsilon(f,g)}(k)|^2 \Big)^{\frac{1}{2}} \leq C_H \|f\|_{H^{m+\beta}} \|g\|_{H^{m+\beta}} 
	\end{align*}
	which implies the desired estimate \eqref{formula:BilinearHT} for $f$ and $g$.
\end{proof}

\section{The form of the lower order remainder terms $R_1^\alpha$ and $R_2^\alpha$}\label{sec:remaining parts}

We show that the orthogonal projection of the truncated remainder terms of the decomposition of $\delta \E^\alpha$ found in Section~\ref{section:decomposition} can be expressed as multiple integrals of analytic functions. 

Let $0 < \varepsilon \leq \frac{1}{2}$, $x\in [0,1]$ and $\gamma \in C^\infty(\RZ,\R^n)$ be a simple closed  arc-length parametrized curve. 
We recall that the truncated remainder terms are given by 
\begin{align*}
(R_1^{\alpha,\varepsilon} \gamma)(x) &= \int_{|w| \in [\varepsilon, \frac{1}{2}]} (\gamma(x+w)-\gamma(x)-w\dot{\gamma}(x)) \left(\frac{1}{|\gamma(x+w)-\gamma(x)|^{\alpha +2}} - \frac{1}{ |w|^{\alpha +2}}\right)\mathrm{d} w, \\
(R_2^{\alpha,\varepsilon} \gamma)(x) &= \int_{|w| \in [\varepsilon, \frac{1}{2}]} \ddot{\gamma}(x)\left(\frac{1}{|\gamma(x+w)-\gamma(x)|^{\alpha}} - \frac{1}{ |w|^{\alpha }}\right)\mathrm{d} w
.\end{align*}

We begin with transforming the more elementary part $P^\perp_{\dot{\gamma}} R_2^{\alpha,\varepsilon} \gamma$.

\begin{thm}
	The term $P^\perp_{\dot{\gamma}} R_2^{\alpha,\varepsilon} \gamma$ can be re-written as multiple integral of the form 
	\begin{align*}
	(P^\perp_{\dot{\gamma}} R_2^\varepsilon \gamma )(x) =  
	\int_{|w|\in [\varepsilon, \frac{1}{2}]} 
	 \iiiint_{[0,1]^4} (s_1-s_2)^2 \mathsf{G}^{\alpha}_2(\gamma)(x)  \mathrm{d} \phi_1 \mathrm{d} \phi_2 \mathrm{d} s_1 \mathrm{d} s_2 \frac{\mathrm{d} w}{|w|^{\alpha-2}}  ,
	\end{align*}
	where 
	\begin{align*}
	\mathsf{G}^{\alpha}_2(\gamma)(x) 
	&= \mathsf{G}^{\alpha}_2(\gamma)(x;{s_1,s_2,\phi_1,\phi_2,w}) \\	
	&= 
	G^{\alpha}_2\Big(  
	\int_0^1 \dot{\gamma} (x+tw) \mathrm{d} t,
	\dot{\gamma}(x),  	\ddot{\gamma}(x+s_2w + (s_1-s_2)\phi_1 w), \\
	&\hspace{3cm}
	\ddot{\gamma}(x+s_2w + (s_1-s_2)\phi_2 w), 
	\ddot{\gamma}(x)  \Big) 
	\end{align*}	
	for some  analytic function $G_2^\alpha : \R^n \setminus \{0,1\} \times \R^{4n} \rightarrow \R^n$. 
\end{thm}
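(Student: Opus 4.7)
The plan is to peel off an explicit factor of $w^2(s_1-s_2)^2$ from the difference
\begin{align*}
\frac{1}{|\gamma(x+w)-\gamma(x)|^\alpha} - \frac{1}{|w|^\alpha}
\end{align*}
so that the singularity in $|w|^\alpha$ is softened to $|w|^{\alpha-2}$ and the remaining bulk is the evaluation of an analytic function at various shifts of $\dot\gamma$ and $\ddot\gamma$. I would set $V(x,w):=\int_0^1\dot\gamma(x+tw)\,dt$, so that $\gamma(x+w)-\gamma(x)=wV(x,w)$ and therefore $\tfrac{1}{|\gamma(x+w)-\gamma(x)|^\alpha}-\tfrac{1}{|w|^\alpha} = (|V|^{-\alpha}-1)/|w|^\alpha$. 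The arc-length condition $|\dot\gamma|\equiv 1$ yields the polarization identity $\langle\dot\gamma(x+s_1w),\dot\gamma(x+s_2w)\rangle = 1-\tfrac12|\dot\gamma(x+s_1w)-\dot\gamma(x+s_2w)|^2$, and expanding $|V|^2$ as the double integral $\iint_{[0,1]^2}\langle\dot\gamma(x+s_1w),\dot\gamma(x+s_2w)\rangle\,ds_1\,ds_2$ gives
\begin{align*}
|V|^2 - 1 = -\tfrac12\iint_{[0,1]^2}|\dot\gamma(x+s_1w)-\dot\gamma(x+s_2w)|^2\,ds_1\,ds_2.
\end{align*}

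Next I would apply the fundamental theorem to $\dot\gamma(x+s_1w)-\dot\gamma(x+s_2w) = (s_1-s_2)w\int_0^1 \ddot\gamma(x+s_2w+(s_1-s_2)\phi w)\,d\phi$ and expand the squared norm via $\bigl|\int_0^1 \psi\,d\phi\bigr|^2 = \iint_{[0,1]^2}\langle\psi(\phi_1),\psi(\phi_2)\rangle\,d\phi_1\,d\phi_2$, producing
\begin{align*}
|V|^2-1 = -\tfrac{w^2}{2}\iiiint_{[0,1]^4}(s_1-s_2)^2\langle\ddot\gamma(x+s_2w+(s_1-s_2)\phi_1 w),\ddot\gamma(x+s_2w+(s_1-s_2)\phi_2 w)\rangle\,d\phi_1\,d\phi_2\,ds_1\,ds_2.
\end{align*}
To handle the factor $|V|^{-\alpha}-1$ I would set $F(u):=u^{-\alpha/2}$ and use the standard identity $F(u)-F(1)=(u-1)h(u)$ with
\begin{align*}
h(u):=\int_0^1 F'(1+t(u-1))\,dt = -\tfrac{\alpha}{2}\int_0^1 (1+t(u-1))^{-\alpha/2-1}\,dt,
\end{align*}
which is real-analytic on $(0,\infty)$. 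Combining everything and using $w^2/|w|^\alpha = 1/|w|^{\alpha-2}$ then produces the claimed quadruple-integral representation.

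Multiplying by $\ddot\gamma(x)$, applying the orthogonal projection $P^\perp_{\dot\gamma(x)}v = v-\langle v,\dot\gamma(x)\rangle\dot\gamma(x)$ (linear and analytic in the pair $(v,\dot\gamma(x))$, and in fact trivial on $\ddot\gamma(x)$ by arc-length parametrization), and interchanging the $w$- and $(\phi_1,\phi_2,s_1,s_2)$-integrations yields exactly the claimed form with
\begin{align*}
G_2^\alpha(V,T,K_1,K_2,K_0) = -\tfrac12\,h(|V|^2)\,\langle K_1,K_2\rangle_{\R^n}\bigl(K_0-\langle K_0,T\rangle_{\R^n}\,T\bigr),
\end{align*}
which is real-analytic in its arguments wherever $|V|\neq 0$. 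For a simple arc-length parametrized curve this condition is satisfied uniformly, since $V(x,w)\to \dot\gamma(x)$ as $w\to 0$ and $|V(x,w)|$ is bounded strictly away from zero on $|w|\in[\varepsilon,\tfrac12]$ by simplicity. The only real obstacle is algebraic bookkeeping: making sure the $(s_1-s_2)^2$ factor emerges with the correct sign and normalizing constants, and verifying that the bi-Lipschitz lower bound $|V|\geq c>0$ places the argument of $h$ safely inside its domain of analyticity, so that the composite $G_2^\alpha$ is genuinely real-analytic as required for the majorant arguments in Section~\ref{sec:mainpart}.
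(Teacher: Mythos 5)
Your proposal is correct and follows essentially the same route as the paper: factor $\gamma(x+w)-\gamma(x)=w\int_0^1\dot\gamma(x+tw)\,dt$, expand $1-|V|^2$ as a double integral of $|\dot\gamma(x+s_1w)-\dot\gamma(x+s_2w)|^2$ via arc-length, pull out the $w^2(s_1-s_2)^2$ factor by the fundamental theorem of calculus, and absorb the remaining ratio into an analytic function of $V$. Your $h(|V|^2)$ with $h(u)=\int_0^1 F'(1+t(u-1))\,dt$, $F(u)=u^{-\alpha/2}$, is exactly $-2g_2^\alpha(V)$ from the paper's proof; the only stylistic difference is that your integral representation of the difference quotient makes the removability of the apparent singularity at $|V|=1$ explicit, whereas the paper writes $g_2^\alpha$ as an explicit ratio and simply asserts analyticity away from the origin.
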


\begin{proof}
We begin by   computing the first term in the integrand of $R_2^{\alpha,\varepsilon} \gamma$. To do so we use the fundamental theorem of calculus and the arc-length parametrization of the curve $\gamma$ (as in the proof of \cite[Prop.~3.2]{BR13}) to get 
\begin{align}\label{formula:R2gammaint1}
&
\frac{1}{|\gamma(x+w)-\gamma(x)|^\alpha} -  \frac{1}{|w|^\alpha}  \nonumber\\
&\qquad= \frac{|w|^\alpha}{|\gamma(x+w)-\gamma(x)|^\alpha} \frac{1- \frac{|\gamma(x+w)-\gamma(x)|^\alpha}{|w|^\alpha}}{1- \frac{|\gamma(x+w)-\gamma(x)|^2}{w^2}}  \frac{1- \frac{|\gamma(x+w)-\gamma(x)|^2}{w^2}}{|w|^\alpha} \nonumber\\
& \qquad= g_2^\alpha\left(\int_0^1 \dot{\gamma} (x+tw) \mathrm{d} t \right) \frac{2\left(1- \frac{|\gamma(x+w)-\gamma(x)|^2}{w^2}\right)}{|w|^\alpha} \nonumber\\
&\qquad = g_2^\alpha\left(\int_0^1 \dot{\gamma} (x+tw) \mathrm{d} t \right) \frac{\iint_{[0,1]^2} |\dot{\gamma}(x+s_1w)-\dot{\gamma}(x+s_2w)|^2 \mathrm{d} s_1 \mathrm{d} s_2}{|w|^\alpha},
\end{align}
where $g_2^\alpha(x) = \frac{1}{2|x|^\alpha}\frac{1-|x|^\alpha}{1-|x|^2}$ for all $x\in \R^n \setminus\{0,1\}$ is analytic away from the origin. 
Since we have by the fundamental theorem of calculus
\begin{align}\label{formula:R2gammaint2}
& \frac{|\dot{\gamma}(x+s_1w)-\dot{\gamma}(x+s_2w)|^2}{w^2}   
=    \int_0^1 \int_0^1 (s_1 - s_2)^2 \nonumber \\
& \qquad \times \left< \ddot{\gamma}(x+s_2w + (s_1-s_2)\phi_1 w), \ddot{\gamma}(x+s_2w + (s_1-s_2)\phi_2 w) \right>_{\R^n} \mathrm{d} \phi_1 \mathrm{d} \phi_2 ,
\end{align}
we can express $R_2^{\alpha,\varepsilon} \gamma$ as 
\begin{align*}
 (R_2^{\alpha,\varepsilon}  \gamma) (x)  
 &=  \int_{|w|\in [\varepsilon, \frac{1}{2}]} \iiiint_{[0,1]^4} (s_1-s_2)^2  \nonumber \\
&\qquad \times  \widetilde{G_2^\alpha}\Big( 
\int_0^1 \dot{\gamma} (x+tw) \mathrm{d} t, 
\ddot{\gamma}(x+s_2w + (s_1-s_2)\phi_1 w), 
\\
&\hspace{3cm}
\ddot{\gamma}(x+s_2w + (s_1-s_2)\phi_2 w), 
\ddot{\gamma}(x)  \Big) 
  \mathrm{d} \phi_1 \mathrm{d} \phi_2 \mathrm{d} s_1 \mathrm{d} s_2  \frac{\mathrm{d} w}{|w|^{\alpha-2}} 
,\end{align*}
where $\widetilde{G_2^\alpha}: \R^n \setminus \{0,1\} \times \R^{3n}  \rightarrow \R^n$ defined by $\widetilde{G_2^\alpha}(a, x, y, z):= g_2^\alpha(a) \left<x,y \right>_{\R^n} z$ is an analytic function away from the origin in the first variable as well. If we apply the orthogonal projection $P^\perp_{\dot{\gamma}}$ on $ R_2^{\alpha,\varepsilon} \gamma$, we finally get the following  representation
\begin{align*}
(P^\perp_{\dot{\gamma}} R_2^{\alpha,\varepsilon} \gamma )(x) =  \int_{|w|\in [\varepsilon, \frac{1}{2}]}  \iiiint_{[0,1]^4} (s_1-s_2)^2 \mathsf{G}^{\alpha}_2(\gamma)(x) \mathrm{d} \phi_1 \mathrm{d} \phi_2 \mathrm{d} s_1 \mathrm{d} s_2 \frac{\mathrm{d} w}{|w|^{\alpha-2}}
,\end{align*}
where  
\begin{align*}
\mathsf{G}^{\alpha}_2(\gamma)(x) 
&= \mathsf{G}^{\alpha}_2(\gamma)(x;{s_1,s_2,\phi_1,\phi_2,w}) \\	
&= 
G^{\alpha}_2\Big(  
\int_0^1 \dot{\gamma} (x+tw) \mathrm{d} t,
\dot{\gamma}(x),  	\ddot{\gamma}(x+s_2w + (s_1-s_2)\phi_1 w), \\
&\hspace{3cm}
\ddot{\gamma}(x+s_2w + (s_1-s_2)\phi_2 w), 
\ddot{\gamma}(x)  \Big) 
\end{align*}	
and 
$G_2^\alpha : \R^n \setminus \{0,1\} \times \R^{4n} \rightarrow \R^n$ given by $G_2^\alpha (a,v,x,y,z) := P^\perp_{v} \widetilde{G_2^\alpha} (a,x,y,z)$
is clearly analytic.
\end{proof}

We use the previous computations to rewrite the orthogonal projection of the first remaining part.

\begin{thm}\label{thm:formR1}
		The term $P^\perp_{\dot{\gamma}} R_1^{\alpha,\varepsilon} \gamma$ can be re-written as multiple integral of the form 
	\begin{align*}
	(P^\perp_{\dot{\gamma}} R_1^\varepsilon \gamma )(x) =  
	\int_{|w|\in [\varepsilon, \frac{1}{2}]} 
	\iiiint_{[0,1]^4} (r_1-r_2)^2 \mathsf{G}^{\alpha}_1(\gamma)(x)  \mathrm{d} \psi_1 \mathrm{d} \psi_2 \mathrm{d} r_1 \mathrm{d} r_2 \frac{\mathrm{d} w}{|w|^{\alpha-2}}  ,
	\end{align*}
	where 
	\begin{align*}
	\mathsf{G}^{\alpha}_1(\gamma)(x) 
	&= \mathsf{G}^{\alpha}_1(\gamma)(x;{r_1,r_2,\psi_1,\psi_2,w}) \\	
	&= 
	G^{\alpha}_1\Big(\int_0^1 \dot{\gamma} (x+sw) \mathrm{d} s,
	\dot{\gamma}(x), 
	\ddot{\gamma}(x+r_2w + (r_1-r_2)\psi_1 w), \\
	&\hspace{3cm}	
	\ddot{\gamma}(x+r_2w + (r_1-r_2)\psi_2 w), 
	\int_0^1 \ddot{\gamma}(x +tw)(1-t) \mathrm{d}t \Big) 
	\end{align*}	
	for some  analytic function $G_1^\alpha : \R^n \setminus \{0,1\} \times \R^{4n} \rightarrow \R^n$. 
\end{thm}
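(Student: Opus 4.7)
The plan is to mirror the computation of the preceding theorem for $R_2^{\alpha,\varepsilon}$, inserting one additional Taylor expansion to handle the vector prefactor $\gamma(x+w)-\gamma(x)-w\dot\gamma(x)$ which is present in $R_1^{\alpha,\varepsilon}$ but absent in $R_2^{\alpha,\varepsilon}$. Concretely, I would treat the two factors of the integrand separately and then collect all powers of $w$ at the end.

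For the displacement factor, Taylor's theorem with integral remainder (valid since $\gamma\in C^\infty$) gives
\[
\gamma(x+w)-\gamma(x)-w\dot\gamma(x) = w^2 \int_0^1 \ddot\gamma(x+tw)(1-t)\,\mathrm{d}t,
\]
which contributes a factor $w^2$ and supplies the last vector slot of $G_1^\alpha$. For the scalar difference I repeat the algebraic identity established in the $R_2^{\alpha,\varepsilon}$ proof, but with $\alpha$ replaced by $\alpha+2$, obtaining
\[
\frac{1}{|\gamma(x+w)-\gamma(x)|^{\alpha+2}} - \frac{1}{|w|^{\alpha+2}} = g_1^\alpha\!\left(\int_0^1 \dot\gamma(x+sw)\,\mathrm{d}s\right) \frac{\iint_{[0,1]^2}|\dot\gamma(x+r_1w)-\dot\gamma(x+r_2w)|^2\,\mathrm{d}r_1\,\mathrm{d}r_2}{|w|^{\alpha+2}},
\]
where $g_1^\alpha(a) := \tfrac{1}{2|a|^{\alpha+2}}\cdot\tfrac{1-|a|^{\alpha+2}}{1-|a|^2}$ is analytic away from the exceptional norms $0$ and $1$. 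A further application of the fundamental theorem of calculus to $\dot\gamma(x+r_1w)-\dot\gamma(x+r_2w)$ then produces the prefactor $(r_1-r_2)^2 w^2$ together with the two middle $\ddot\gamma$-arguments of $G_1^\alpha$, exactly as in the $R_2^{\alpha,\varepsilon}$ computation.

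Multiplying everything together, the total weight of $w$ is $w^2\cdot w^2/|w|^{\alpha+2} = |w|^{2-\alpha}$, which matches the stated $1/|w|^{\alpha-2}$. Setting $\widetilde{G_1^\alpha}(a,x,y,z) := g_1^\alpha(a)\langle x,y\rangle_{\R^n}\,z$ and then $G_1^\alpha(a,v,x,y,z) := P^\perp_v\, \widetilde{G_1^\alpha}(a,x,y,z)$ yields an analytic function on its stated domain, since $v\mapsto P^\perp_v$ is polynomial in $v$. Applying $P^\perp_{\dot\gamma(x)}$ pointwise to the integrand then delivers the claimed multiple-integral representation. The work is essentially bookkeeping; the only point requiring care is tracking the exponents of $w$ and recognising that the scalar factor really factors through the analytic $g_1^\alpha$, both of which are immediate from the $R_2^{\alpha,\varepsilon}$ argument.
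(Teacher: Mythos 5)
Your proposal is correct and follows essentially the same approach as the paper: Taylor expansion of the displacement factor, the algebraic identity for the power difference (the paper writes it with one factor of $w^2$ already absorbed, yielding $\tfrac{w^2}{|\gamma(x+w)-\gamma(x)|^{\alpha+2}}-\tfrac{1}{|w|^\alpha}$, but this is the same identity), the $\dot\gamma$-difference trick inherited from the $R_2$ proof, and composing with $P^\perp_v$ to get $G_1^\alpha$. As a minor aside, you correctly write $G_1^\alpha(a,v,x,y,z):=P^\perp_v\widetilde{G_1^\alpha}(a,x,y,z)$, whereas the paper's proof has a small typo ($\widetilde{G_2^\alpha}$ in place of $\widetilde{G_1^\alpha}$).
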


\begin{proof}
By the integral form of the remainder of a first order Taylor approximation, we compute the integrand of $R_1^{\alpha,\varepsilon} \gamma$ as
\begin{align}\label{formula:R1integrand}
& (\gamma(x+w)-\gamma(x)-w\dot{\gamma}(x)) \left(\frac{1}{|\gamma(x+w)-\gamma(x)|^{\alpha +2}} - \frac{1}{ |w|^{\alpha +2}}\right) \nonumber\\
&\qquad =  w^2 \int_0^1 \ddot{\gamma}(x+tw)(1-t) \mathrm{d} t \ \left(\frac{1}{|\gamma(x+w)-\gamma(x)|^{\alpha +2}} - \frac{1}{ |w|^{\alpha +2}}\right) \nonumber\\
&\qquad =  \int_0^1 \ddot{\gamma}(x+tw)(1-t) \mathrm{d} t \ \left(\frac{w^2}{|\gamma(x+w)-\gamma(x)|^{\alpha +2}} - \frac{1}{ |w|^{\alpha}}\right).
\end{align}
Thus the last term in \eqref{formula:R1integrand} can be expressed as
\begin{align}\label{formula:R1integrand2}
& \frac{w^2}{|\gamma(x+w)-\gamma(x)|^{\alpha +2}} - \frac{1}{ |w|^{\alpha}} \nonumber\\
& \qquad =  \frac{|w|^{\alpha +2}}{|\gamma(x+w)-\gamma(x)|^{\alpha +2}} \left(\frac{1-\frac{|\gamma(x+w)-\gamma(x)|^{\alpha +2}}{|w|^{\alpha +2}} }{|w|^\alpha}\right) \nonumber\\
&\qquad  = \frac{|w|^{\alpha +2}}{|\gamma(x+w)-\gamma(x)|^{\alpha +2}} \frac{1-\frac{|\gamma(x+w)-\gamma(x)|^{\alpha +2}}{|w|^{\alpha +2}} }{1-\frac{|\gamma(x+w)-\gamma(x)|^{2}}{|w|^{2}}} \left(\frac{1-\frac{|\gamma(x+w)-\gamma(x)|^{2}}{|w|^{2}} }{|w|^\alpha}\right) \nonumber\\
&\qquad  =g_1^\alpha\left(\int_0^1 \dot{\gamma} (x+sw) \mathrm{d} s \right) \left(\frac{1-\frac{|\gamma(x+w)-\gamma(x)|^{2}}{|w|^{2}} }{|w|^\alpha}\right)
,\end{align}
where $g_1^\alpha(x) = \frac{1}{2|x|^{\alpha+2}}\frac{1-|x|^{\alpha+2}}{1-|x|^2}$ for all $x\in \R^n\setminus \{0,1\}$ is analytic away from the origin. By transfering the computations in \eqref{formula:R2gammaint1} and \eqref{formula:R2gammaint2} to the last term in \eqref{formula:R1integrand2} we find that  
\begin{align*}
&(R_1^{\alpha,\varepsilon} \gamma) (x) \\
&=  \int_{|w|\in [\varepsilon, \frac{1}{2}]}  \iiiint_{[0,1]^4} (r_1-r_2)^2 \\
&\quad\times
 \widetilde{G_1^\alpha} \bigg( \int_0^1 \dot{\gamma}(x+sw)\mathrm{d}s, 
  \ddot{\gamma}(x+r_2w + (r_1-r_2)\psi_1 w), 
  \\
 &\hspace{2cm}
 \ddot{\gamma}(x+r_2w + (r_1-r_2)\psi_2 w), 
 \int_0^1 \ddot{\gamma}(x+tw)(1-t) \mathrm{d} t \bigg) \mathrm{d} \psi_1 \mathrm{d} \psi_2 \mathrm{d} r_1 \mathrm{d} r_2 \mathrm{d} w
,\end{align*}
where $\widetilde{G_1^\alpha} : \R^n\setminus \{0,1\} \times \R^{3n} \rightarrow \R^n$ defined by $\widetilde{G_1^\alpha} (a,x,y,z) := g_1^\alpha(a) \left<x,y \right>_{\R^n} z$ is analytic away from 0 and 1 in the first variable.

Finally, by applying the orthogonal projection to $R_1^{\alpha,\varepsilon} \gamma$, we obtain
\begin{align*}
(P^\perp_{\dot{\gamma}} R_1^{\alpha,\varepsilon} \gamma )(x) =  \int_{|w|\in [\varepsilon, \frac{1}{2}]}  \iiiint_{[0,1]^4}  (r_1-r_2)^2   \mathsf{G}^{\alpha}_1(\gamma)(x) \mathrm{d} \psi_1 \mathrm{d} \psi_2 \mathrm{d} r_1 \mathrm{d} r_2 \mathrm{d} w
,\end{align*}
where 
\begin{align*}
\mathsf{G}^{\alpha}_1(\gamma)(x) 
&= \mathsf{G}^{\alpha}_1(\gamma)(x;{r_1,r_2,\psi_1,\psi_2,w}) \\	
&= 
G^{\alpha}_1\Big(\int_0^1 \dot{\gamma} (x+sw) \mathrm{d} s,
\dot{\gamma}(x), 
\ddot{\gamma}(x+r_2w + (r_1-r_2)\psi_1 w), \\
&\hspace{3cm}	
\ddot{\gamma}(x+r_2w + (r_1-r_2)\psi_2 w), 
\int_0^1 \ddot{\gamma}(x +tw)(1-t) \mathrm{d}t \Big) 
\end{align*}
and
$G_1^\alpha : \R^n \setminus \{0,1\} \times \R^{4n} \rightarrow \R^n$ given by $G_1^\alpha (a,v,x,y,z) := P^\perp_{v} \widetilde{G_2^\alpha} (a,x,y,z)$
is obviously analytic.
\end{proof}

\section{Proof of the main theorem by Cauchy's method of majorants}\label{sec:mainpart}

We now turn to the proof of Theorem~\ref{main}. Our strategy is to first establish a  recursive estimate for $\|\partial^l \gamma \|_{H^{\alpha -1} }$ from which we can infer, by Cauchy's method of majorants, the analyticity of the curve $\gamma$.

Let $m := 1 > \frac{1}{2}$ and $\gamma = (\gamma_1,\ldots, \gamma_n) :\RZ\rightarrow \R^n$ be a simple  closed  arc-length parametrized curve that is in the class $ C^\infty (\RZ,\R^n)$. If $\gamma$ is a critical point of $\E^\alpha + \lambda \Ell$, i.e.~if we have  
\begin{align*}
\delta \E^\alpha(\gamma;h) + \lambda \int_{\RZ} \langle \dot{\gamma}, \dot{h} \rangle \mathrm{d}x = 0
\end{align*}
for all $h\in H^{\frac{\alpha +1}{2},2}(\RZ,\R^n)$, 
then Theorem~\ref{thm:1stvar} and integration by parts imply that the curve $\gamma$ solves the Euler-Lagrange equation
\begin{align*}
( H^\alpha\gamma + \lambda \ddot{\gamma},h)_{L^2([0,1], \R^n)} = 0
\end{align*}
for all $h \in  H^{\frac{\alpha +1}{2},2}(\RZ,\R^n)$.
Thus by the decompositions \eqref{formula:H} and \eqref{formula:Htilde} of the gradient of O'Hara's knot energies, we conclude that
$$
H^\alpha\gamma = \alpha P^\perp_{\dot{\gamma}} Q^\alpha\gamma + 2 \alpha P^\perp_{\dot{\gamma}} R_1^\alpha\gamma  - 2  P^\perp_{\dot{\gamma}} R_3^\alpha
\gamma  + \lambda \ddot{\gamma} \equiv 0
$$
on $\RZ$. Moreover, by Corollary~\ref{kor:Ql} with $\beta:= \alpha -2 \in (0,1)$ and the triangle inequality, it follows for any integer $l\geq 0$ that 
\begin{align}\label{formula:partiallgamma}
\|\partial^{l+3} \gamma \|_{H^{1+\beta} } & \leq \widetilde{C} \|\partial^l Q^\alpha\gamma\|_{H^1} \nonumber\\
& = \widetilde{C} \left\| \partial^l \left( \alpha P^T_{\dot{\gamma}} Q^\alpha\gamma  - 2\alpha P^\perp_{\dot{\gamma}} R_1^\alpha\gamma  + 2 P^\perp_{\dot{\gamma}} R_2^\alpha \gamma - \lambda \ddot{\gamma} \right) \right\|_{H^1}   \nonumber\\
& \leq 6\widetilde{C}    ( 
\|\partial^l P^T_{\dot{\gamma}} Q^\alpha\gamma\|_{H^1} 
+ \|\partial^l P^\perp_{\dot{\gamma}} R_1^\alpha\gamma \|_{H^1}  
\nonumber\\ &\hspace{2cm}
+ \|\partial^l P^\perp_{\dot{\gamma}} R_3^\alpha\gamma \|_{H^1} 
+ \lambda \|\partial^{l+2} \gamma \|_{H^1}  ).
\end{align}
In order to get a recursive estimate on $\|\partial^l \gamma \|_{H^{1+\beta} }$ we aim to derive suitable estimates for the first three terms on the right-hand side of \eqref{formula:partiallgamma}. 
To do  so for the tangential part of $Q^\alpha$, we use the fractional Leibniz rule  from Section \ref{sec:bilinearHilberttransform}
in the following:

\begin{lem}\label{est:Q}
	Let $l\in\N_0$, $\beta:= \alpha -2 \in (0,1)$ for some $2 < \alpha < 3$ and $0<\varepsilon\leq \frac 12$. Then there exists a positive constant $C_{Q^\alpha}$ independent of $\varepsilon$, $\gamma$ and $l$ such that 
	\begin{align*}
	&\|\partial^l P^T_{\dot{\gamma}} Q^{\alpha,\varepsilon}\gamma\|_{H^1} \\
	&\qquad \leq  C_{Q^\alpha} \sum_{k_1=0}^l \sum_{k_2=0}^{k_1} \binom{l}{k_1} \binom{k_1}{k_2}  \|\partial^{l-k_1+2} \gamma \|_{H^{1+\beta}} \|\partial^{k_1-k_2+2} \gamma \|_{H^{1+\beta}} \|\partial^{k_2 +1}\gamma \|_{H^1}. 
	\end{align*}
\end{lem}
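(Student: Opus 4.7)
The plan is to express $P^T_{\dot\gamma}Q^{\alpha,\varepsilon}\gamma$ as a sum of bilinear singular integrals multiplied by $\dot\gamma(x)$, differentiate it $l$ times by a nested Leibniz expansion, and invoke Theorem~\ref{thm:BilinearHT} together with the Banach algebra property \eqref{prop:Banachalgebra} to close the estimate. Starting from formula~\eqref{formula:PTQepsilon} and multiplying both sides by $\dot\gamma(x)$, one obtains
\[
P^T_{\dot\gamma}Q^{\alpha,\varepsilon}\gamma(x) = -2\int_0^1\int_0^1 (1-t)t\sum_{i=1}^n H^\varepsilon_{t,st,\beta}(\ddot\gamma_i,\ddot\gamma_i)(x)\,\dot\gamma(x)\,\mathrm{d}s\,\mathrm{d}t,
\]
with $\beta := \alpha-2\in(0,1)$. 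This reduces the lemma to bounding the $H^1$-norm of $\partial^l_x$ of the right-hand side.

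Because the truncation parameter $\varepsilon>0$ makes the kernel $1/(w|w|^\beta)$ bounded, differentiation under the integral sign is legitimate and the bilinear integral satisfies a Leibniz-type identity
\[
\partial_x^k H^\varepsilon_{s_1,s_2,\beta}(f,g) = \sum_{j=0}^k\binom{k}{j} H^\varepsilon_{s_1,s_2,\beta}(f^{(j)},g^{(k-j)}).
\]
Applying first the ordinary Leibniz rule to the product of the scalar bilinear-integral factor and $\dot\gamma(x)$ (outer sum over $k$), and then the identity above inside each bilinear integral (inner sum over $j$), yields a double sum $\sum_{k=0}^l\sum_{j=0}^k\binom{l}{k}\binom{k}{j}$ whose summands are products of $H^\varepsilon_{t,st,\beta}(\gamma_i^{(j+2)},\gamma_i^{(k-j+2)})(x)$ with $\gamma^{(l-k+1)}(x)$.

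The next step is to take the $H^1$-norm, push it inside the $(s,t)$-integral by Minkowski's inequality, decouple scalar and vector factor via the Banach algebra bound \eqref{prop:Banachalgebra}, and then apply Theorem~\ref{thm:BilinearHT} with $m=1$ to estimate, uniformly in $\varepsilon$,
\[
\|H^\varepsilon_{t,st,\beta}(\gamma_i^{(j+2)},\gamma_i^{(k-j+2)})\|_{H^1} \le C_H \|\gamma^{(j+2)}\|_{H^{1+\beta}} \|\gamma^{(k-j+2)}\|_{H^{1+\beta}}.
\]
Absorbing $\int_0^1\int_0^1(1-t)t\,\mathrm{d}s\,\mathrm{d}t = 1/6$, the sum $\sum_{i=1}^n$, and the constants $C_1,C_H$ into a single $C_{Q^\alpha}$ and finally reindexing $k_1:=l-j$, $k_2:=l-k$ gives the claimed inequality: one checks that $\binom{l}{k}\binom{k}{j} = \binom{l}{k_1}\binom{k_1}{k_2}$ (both equal the trinomial $l!/(j!(k-j)!(l-k)!)$) and that the three derivative orders transform as $j+2\mapsto l-k_1+2$, $k-j+2\mapsto k_1-k_2+2$, $l-k+1\mapsto k_2+1$.

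The main obstacle is essentially combinatorial bookkeeping: keeping the two-stage Leibniz expansion, the trinomial binomial coefficients, and the final reindexing consistent. All the analytic content, in particular the uniformity in $\varepsilon$ and the $H^{1+\beta}$-loss, is packaged into Theorem~\ref{thm:BilinearHT}, so that once $P^T_{\dot\gamma}Q^{\alpha,\varepsilon}\gamma$ has been recognised as a weighted superposition of bilinear singular integrals multiplied by the tangent vector, the estimate assembles itself.
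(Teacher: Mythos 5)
Your proposal is correct and follows essentially the same route as the paper: rewrite $P^T_{\dot\gamma}Q^{\alpha,\varepsilon}\gamma$ via \eqref{formula:PTQepsilon} as an $(s,t)$-averaged superposition of the truncated bilinear integrals $H^\varepsilon_{t,st,\beta}(\ddot\gamma_i,\ddot\gamma_i)\,\dot\gamma$, apply the Leibniz rule twice to distribute $l$ derivatives over the three $\gamma$-factors, and then estimate with the Banach algebra property of $H^1$ together with Theorem~\ref{thm:BilinearHT}. The only cosmetic difference is that you apply Leibniz first to the outer product and then inside $H^\varepsilon$ (indexing $\sum_{k=0}^l\sum_{j=0}^k$), whereas the paper expands the triple product directly under the $w$-integral (indexing $\sum_{k_1=0}^l\sum_{k_2=0}^{k_1}$); your reindexing $k_1=l-j$, $k_2=l-k$ correctly shows the two are identical.
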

\begin{proof}
	Using formula \eqref{formula:compTQ} and the Leibniz rule twice yields
	\begin{align*}
	&\partial^l  \Big(P^T_{\dot{\gamma}} Q^{\alpha,\varepsilon}\gamma (x)\Big)\\
	&=  \partial^l \Big(2  \int_{|w|\in [\varepsilon, \frac{1}{2}]}  \iint_{[0,1]^2} (1-t) (-t) \frac{\left<\ddot{\gamma}(x+tw), \ddot{\gamma}(x+stw)\right>_{\R^n}}{w |w|^{\alpha-2}} \dot{\gamma}(x) \mathrm{d}s \mathrm{d}t \mathrm{d}w \Big) \\
	&=   2  \int_{|w|\in [\varepsilon, \frac{1}{2}]}  \iint_{[0,1]^2} (1-t) (-t) \partial^l \left(\frac{\left<\ddot{\gamma}(x+tw), \ddot{\gamma}(x+stw)\right>_{\R^n}}{w|w|^{\beta}} \dot{\gamma}(x)\right) \mathrm{d}s \mathrm{d}t \mathrm{d}w \\
	&=  2  \int_{|w|\in [\varepsilon, \frac{1}{2}]} \iint_{[0,1]^2} (1-t) (-t) \\
	& \quad \times \sum_{k_1=0}^l \sum_{k_2=0}^{k_1} \binom{l}{k_1} \binom{k_1}{k_2} \frac{\left<\partial^{l-k_1+2} \gamma(x+tw), \partial^{k_1-k_2+2} \gamma (x+stw)\right>_{\R^n}}{w|w|^{\beta}} \partial^{k_2 +1}\gamma(x) \mathrm{d}s \mathrm{d}t \mathrm{d}w \\ 
	&=  2 \iint_{[0,1]^2} (1-t) (-t)  \\
	& \quad \times \sum_{k_1=0}^l \sum_{k_2=0}^{k_1} \binom{l}{k_1} \binom{k_1}{k_2}  \sum_{k=1}^n H_{t,st, \beta}^\varepsilon(\partial^{l-k_1+2} \gamma_k, \partial^{k_1-k_2+2} \gamma_k )(x) \partial^{k_2 +1}\gamma(x) \mathrm{d}s \mathrm{d}t,
	\end{align*}
	where the smoothness of the integrand allows interchanging of integrals and derivative. 
Then for all $1\leq m \leq n$
	we obtain the component-wise estimate 
	\begin{align}\label{formula:compQ}
	& \|\partial^l (P^T_{\dot{\gamma}} Q^{\alpha,\varepsilon}\gamma)_m\|_{H^1} \nonumber\\
	& \leq  2 \iint_{[0,1]^2} |1-t| |t|    \nonumber\\
	& \quad   \times \sum_{k_1=0}^l \sum_{k_2=0}^{k_1} \binom{l}{k_1} \binom{k_1}{k_2}  \sum_{k=1}^n C_{1} \| H_{t,st,\beta}^\varepsilon(\partial^{l-k_1+2} \gamma_k, \partial^{k_1-k_2+2} \gamma_k )\|_{H^1} \|\partial^{k_2 +1}\gamma_m \|_{H^1} \mathrm{d}s \mathrm{d}t \nonumber\\
	& \leq  2 C_{1} \iint_{[0,1]^2} |1-t| |t|   \nonumber\\
	& \quad  \times \sum_{k_1=0}^l \sum_{k_2=0}^{k_1} \binom{l}{k_1} \binom{k_1}{k_2}  \left( \sum_{k=1}^n C_H\|\partial^{l-k_1+2} \gamma_k \|_{H^{1+\beta}} \|\partial^{k_1-k_2+2} \gamma_k \|_{H^{1+\beta}} \right) \|\partial^{k_2 +1}\gamma_m \|_{H^1}  \mathrm{d}s \mathrm{d}t  \nonumber\\
	& \leq  2 C_{1} C_H \sum_{k_1=0}^l \sum_{k_2=0}^{k_1} \binom{l}{k_1} \binom{k_1}{k_2}  \|\partial^{l-k_1+2} \gamma \|_{H^{1+\beta}} \|\partial^{k_1-k_2+2} \gamma \|_{H^{1+\beta}} \|\partial^{k_2 +1}\gamma_m \|_{H^1}  
	\end{align}
	via the Banach algebra property \eqref{prop:Banachalgebra} of $H^1$ and 
	the fractional Leibniz rule for each of the components (i.e.~Theorem~\ref{thm:BilinearHT}).
	The desired statement follows from \eqref{formula:compQ} and the equivalence between the $1$-norm and the $2$-norm with a finite positive constant $C_{Q^\alpha}:= 2 C_{1} C_H \sqrt{n} $.	
\end{proof}

Next we estimate the orthogonal projection of the truncated remaining parts $R_1^{\alpha,\varepsilon} \gamma$ and $R_2^{\alpha,\varepsilon} \gamma$. The important ingredients for this are the forms of $P^\perp_{\dot{\gamma}} R_1^{\alpha,\varepsilon} \gamma$ and $P^\perp_{\dot{\gamma}} R_2^{\alpha,\varepsilon} \gamma$ derived in 
Chapter~\ref{sec:remaining parts}, Faà di Bruno's formula and the Banach algebra property of $H^1$.

\begin{lem}\label{est:R1}
Let  $l\in\N_0$, $2 < \alpha < 3$, and consider the mapping $f = (f_{1},\ldots, f_{5n})  : \RZ \rightarrow \R^n \setminus \{0,1\} \times \R^{4n}$
given by 
$$
	f(x) := (
	\dot{\gamma} (x),
	\dot{\gamma} (x),
	\ddot{\gamma}(x),
	\ddot{\gamma}(x),
	\ddot{\gamma}(x))
	.$$
	Then there exist positive constants $C_{R_1^\alpha}$ and $r$   independent of $\varepsilon$, $\gamma$ and $l$   such that 
	the universal polynomials $p_l^{(5n)}$ from the multivariate form of Fa\`a di Bruno's formula.
\eqref{eqn:FdB}	
	satisfy
	\begin{align*}
	\|\partial^l P^\perp_{\dot{\gamma}} R_1^{\alpha,\varepsilon} \gamma \|_{H^1} 
	&   \leq C_{R_1^\alpha} 
	p_l^{(5n)}\Big(\Big\{\frac{(|\eta|+1)!}{r^{|\eta|+1}} \Big\}_{|\eta| \leq l},
	 \Big\{C_1\|f_{i}^{(j)}\|_{H^1} \Big\}_{
	\substack{
	1\leq i \leq 5n \\
           1\leq j \leq l\ \, }	 	 
	 }\Big)
	,\end{align*}
	where $C_1$ is the constant from the estimate \eqref{prop:Banachalgebra} with $m=1$.
\end{lem}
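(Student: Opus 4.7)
The plan is to start from the representation of $P^\perp_{\dot{\gamma}} R_1^{\alpha,\varepsilon} \gamma$ furnished by Theorem~\ref{thm:formR1}, which exhibits the integrand as the analytic function $G_1^\alpha$ composed with a map $\tilde F$ built from first and second derivatives of $\gamma$ evaluated at translated points of $x$. Since $\gamma\in C^\infty$, we may commute $\partial^l$ with the integrations in $w,r_1,r_2,\psi_1,\psi_2$, reducing the problem to a uniform-in-parameters $H^1$-estimate of $\partial^l(G_1^\alpha\circ \tilde F)$ followed by integration against $(r_1-r_2)^2|w|^{2-\alpha}$, which is integrable over $[-\tfrac12,\tfrac12]\times [0,1]^4$ since $2<\alpha<3$.

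Next I would apply the multivariate Fa\`a di Bruno formula \eqref{eqn:FdB} to the composition $G_1^\alpha\circ \tilde F$ and take $H^1$-norms. The Banach algebra property \eqref{prop:Banachalgebra} of $H^1$, combined with the non-negativity of the coefficients of $p_l^{(5n)}$ and its one-homogeneity in the first argument list (so that every monomial contains exactly one factor built from a derivative of $G_1^\alpha$, multiplied by a product of derivatives of $\tilde F$), allows me to push the norm inside the polynomial, at the cost of one factor of $C_1$ per derivative-of-$\tilde F$ entry. Translation invariance of the $H^1(\RZ)$-norm identifies these entries with $\|f_i^{(j)}\|_{H^1}$ in the notation of the lemma, since for instance $\|(\mathrm{d}/\mathrm{d}x)^j\ddot\gamma(\cdot+r_2 w+(r_1-r_2)\psi_1 w)\|_{H^1}=\|\ddot\gamma^{(j)}\|_{H^1}$, and the integrated components like $\int_0^1 \dot\gamma(\cdot+sw)\,\mathrm{d}s$ are handled by Minkowski's inequality.

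It remains to dominate the scalar factors $\|\partial^\eta G_1^\alpha\circ \tilde F\|_{H^1}$ by $C\cdot (|\eta|+1)!/r^{|\eta|+1}$. Because $\gamma$ is a smooth simple arc-length parametrized curve, $\tilde F$ takes values in a compact set $K$ inside the domain of analyticity of $G_1^\alpha$: the bi-Lipschitz property of $\gamma$ keeps $\int_0^1\dot\gamma(x+sw)\,\mathrm{d}s$ bounded away from $0$, while $|\dot\gamma|\equiv 1$ keeps its norm at most $1$. Theorem~\ref{thm:Tayloranalytic} then supplies Cauchy estimates $\|\partial^\eta G_1^\alpha\|_{L^\infty(K)}\leq C\,|\eta|!/r^{|\eta|}$, and the extra derivative required by the $H^1$-norm, produced via the chain rule $\partial_x[\partial^\eta G_1^\alpha(\tilde F(x))]=\sum_i \partial^{\eta+e_i} G_1^\alpha(\tilde F(x))\,\tilde F_i'(x)$, accounts for the upgrade $|\eta|!\mapsto (|\eta|+1)!$ and $r^{|\eta|}\mapsto r^{|\eta|+1}$, with the uniform $C^2$-bounds on $\tilde F$ absorbed into $C_{R_1^\alpha}$.

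The main technical obstacle is the bookkeeping inside Fa\`a di Bruno's polynomial: one must verify that the one-homogeneity of $p_l^{(5n)}$ in its first argument list places exactly one $G_1^\alpha$-factor in each monomial (so that the factorial scalar bound enters linearly rather than multiplicatively) and then check that the integrations in $w,r_1,r_2,\psi_1,\psi_2$ contribute only a uniform constant, independently of $\varepsilon$ and $l$, which is the content of the $\varepsilon$-uniformity asserted by the lemma.
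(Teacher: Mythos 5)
Your proposal follows essentially the same route as the paper's own proof: both start from the integral representation of Theorem~\ref{thm:formR1}, commute $\partial^l$ with the parameter integrations, apply the multivariate Fa\`a di Bruno formula \eqref{eqn:FdB} to the composition with the analytic $G_1^\alpha$, invoke the $H^1$ Banach algebra inequality \eqref{prop:Banachalgebra} together with the one-homogeneity and non-negativity of $p_l^{(5n)}$, identify the derivative entries with $\|f_i^{(j)}\|_{H^1}$ via translation invariance and Minkowski's inequality, bound the scalar $\|\partial^\eta G_1^\alpha\circ f_1\|_{H^1}$ factors by Cauchy-type estimates yielding $(|\eta|+1)!/r^{|\eta|+1}$, and finish with the uniform integrability of $|w|^{2-\alpha}$. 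The only difference is that where the paper cites the proof of Theorem~7.2 in \cite{BV19} for the bound on $\|\partial^\eta G_1^\alpha\circ f_1\|_{H^1}$, you spell out that argument directly (compactness of the range of $f_1$, Theorem~\ref{thm:Tayloranalytic}, and the one extra chain-rule derivative accounting for the factorial shift), which is a faithful reconstruction of the cited step.
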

\begin{proof}
	By Theorem~\ref{thm:formR1}  we can express 
	$P^\perp_{\dot{\gamma}} R_1^{\alpha,\varepsilon} \gamma$ as
	\begin{align*}
	(P^\perp_{\dot{\gamma}} R_1^{\alpha,\varepsilon} \gamma )(x) = \int_{|w|\in [\varepsilon, \frac{1}{2}]}  \iiiint_{[0,1]^4}  (r_1-r_2)^2  G_1^\alpha (f_1(x)) \mathrm{d} \psi_1 \mathrm{d} \psi_2 \mathrm{d} r_1 \mathrm{d} r_2 \frac{\mathrm{d} w}{|w|^{\alpha-2}}
	\end{align*}
	where $G_1^\alpha : \R^n \setminus \{0,1\} \times \R^{4n} \rightarrow \R^n$ is an analytic function
	and
	\begin{align*}
	f_1(x) := \begin{pmatrix}
	\int_0^1 \dot{\gamma} (x+sw) \mathrm{d}s \\
	\dot{\gamma} (x) \\
	\ddot{\gamma}(x+r_2w+(r_1-r_2)\psi_1 w) \\
	\ddot{\gamma}(x+r_2w+(r_1-r_2)\psi_2 w))  \\
	\int_0^1 \ddot{\gamma}(x+tw) \mathrm{d}t
	\end{pmatrix}.
	\end{align*}
	Due to the smoothness of the integrand and the generalized Faà di Bruno's formula 
	\eqref {eqn:FdB}, 
	we component-wise find for all $1 \leq k\leq 5n$ that 
	\begin{align*}
	& \partial^l (P^\perp_{\dot{\gamma}} R_1^{\alpha,\varepsilon} \gamma )_k(x) \\
	& = \partial^l \int_{|w|\in [\varepsilon, \frac{1}{2}]}  \iiiint_{[0,1]^4}  (r_1-r_2)^2  G_{1,k}^\alpha (f_1(x)) \mathrm{d} \psi_1 \mathrm{d} \psi_2 \mathrm{d} r_1 \mathrm{d} r_2 \frac{\mathrm{d} w}{|w|^{\alpha-2}} \\
	& = \int_{|w|\in [\varepsilon, \frac{1}{2}]}  \iiiint_{[0,1]^4}  (r_1-r_2)^2  \partial^l G_{1,k}^\alpha (f_1(x)) \mathrm{d} \psi_1 \mathrm{d} \psi_2 \mathrm{d} r_1 \mathrm{d} r_2 \frac{\mathrm{d} w}{|w|^{\alpha-2}} \\
	& = \int_{|w|\in [\varepsilon, \frac{1}{2}]}  \iiiint_{[0,1]^4}  (r_1-r_2)^2   \\
	& \hspace{3em} \times p_l^{(5n)}\Big(\{(\partial^\eta G_1^\alpha)_k(f_1(x)) \}_{|\eta| \leq l}, \{f_{1,i}^{(j)}(x) \}_{
\substack{
1\leq i \leq 5n \\
           1\leq j \leq l\ \, }	 
	}\Big) \mathrm{d} \psi_1 \mathrm{d} \psi_2 \mathrm{d} r_1 \mathrm{d} r_2 \frac{\mathrm{d} w}{|w|^{\alpha-2}}.
	\end{align*}
	Moreover, by applying the $H^1$-norm on $\partial^l (P^\perp_{\dot{\gamma}} R_1^{\alpha,\varepsilon} \gamma )$ component-wise for any $1 \leq k \leq 5n$, we find via the    Banach algebra property of $H^1$ that 
	\begin{align*}
	&\|\partial^l (P^\perp_{\dot{\gamma}} R_1^{\alpha,\varepsilon} \gamma )_k\|_{H^1}  \\
	& \leq \int_{|w|\in [\varepsilon, \frac{1}{2}]}  \iiiint_{[0,1]^4}  |r_1-r_2|^2  \\
	& \hspace{2.5em}\times  
	\Big\|p_l^{(5n)}\Big(
	\{(\partial^\eta G_1^\alpha)_k(f_1) \}_{|\eta| \leq l}, \{f_{1,i}^{(j)} \}_{	
\substack{
1\leq i \leq 5n \\
           1\leq j \leq l\ \, }	           	
	}\Big) 
	\Big\|_{H^1} \mathrm{d} \psi_1 \mathrm{d} \psi_2 \mathrm{d} r_1 \mathrm{d} r_2 \frac{\mathrm{d} w}{|w|^{\alpha-2}} \\
	& \leq  \int_{|w|\in [\varepsilon, \frac{1}{2}]}  \iiiint_{[0,1]^4}  |r_1-r_2|^2   \\
	& \hspace{2.5em}
	\times  p_l^{(5n)}\Big(
	\{ \left\|(\partial^\eta G_1^\alpha)_k(f_1)\right\|_{H^1} \}_{|\eta| \leq l}, \{C_1\|f_{1,i}^{(j)}\|_{H^1} \}_{
\substack{
1\leq i \leq 5n \\
           1\leq j \leq l\ \, }	 		
	}\Big) 
	\mathrm{d} \psi_1 \mathrm{d} \psi_2 \mathrm{d} r_1 \mathrm{d} r_2 \frac{\mathrm{d} w}{|w|^{\alpha-2}} 
	,\end{align*}
	where the constant $C_1$ appears only in front of $\|f_{1,i}^{(j)}\|_{H^1}$ because  of the one-homogeneity of $p_l^{(5n)}$ in its first components. 
	
	For any $1\leq j \leq l$ we also observe    that	
$	\|f_{1,k}^{(j)}\|_{H^1}  \leq \|f_k^{(j)}\|_{H^1} $
whenever  $1\leq k\leq n$, $ 4n+1\leq k \leq 5n$ 
and that 
$\|f_{1,k}^{(j)}\|_{H^1}  = \|f_k^{(j)}\|_{H^1}$
whenever $n+1\leq k\leq 4n$.
Moreover, $\|f_k^{(j)}\|_{H^1}$ does not depend on any of the parameters $w,\psi_i, r_i, s, t$ for all $1\leq k\leq 5n$ and $1\leq j \leq l$. 
In addition, since $G_1^\alpha$ is an analytic function, we can deduce from the proof of \cite[Theor.~7.2]{BV19} that 
$$
	\left\|(\partial^\eta G_1^\alpha)(f_1)\right\|_{H^1}  
	\leq \widetilde{C}_{R_1^\alpha}  \frac{(|\eta|+1)!}{r^{|\eta|+1}}
$$
for some positive constant $\widetilde{C}_{R_1^\alpha} $ independent of $\varepsilon$, $\gamma$ and $l$. 

   Now using the elementary estimate $\int_{|w|\in [\varepsilon, \frac{1}{2}]} \frac{\mathrm{d} w}{|w|^{\alpha-2}} \leq \frac{2}{3-\alpha} < \infty$ since $2 < \alpha < 3$ and the fact that $p_l^{(5n)}$ is one-homogeneous in the first components, we finally obtain
$$
	 \|\partial^l (P^\perp_{\dot{\gamma}} R_1^\varepsilon \gamma )\|_{H^1} 
	 \leq C_{R^\alpha_1} \, p_l^{(5n)}
	 \Big(\Big\{  \frac{(|\eta|+1)!}{r^{|\eta|+1}}\Big\}_{|\eta| \leq l}, \Big\{\|C_1f_{i}^{(j)}\|_{H^1} \Big\}_{
	 \substack{
1\leq i \leq 5n \\
           1\leq j \leq l\ \, }	
	 }\Big) ,
$$
	where $C_{R^\alpha_1} := \widetilde{C}_{R_1^\alpha}  \frac{2}{3-\alpha}$ is finite and the right-hand side is independent of $\varepsilon$.
\end{proof}

The orthogonal projection of the second remainder term can be estimated along the lines of the previous Lemma~\ref{est:R1} to get the following:

\begin{lem}\label{est:R2}
	Let $l\in\N_0$ and $f = (\dot{\gamma},\dot{\gamma},\ddot{\gamma}, \ddot{\gamma}, \ddot{\gamma})$ as in Lemma~\ref{est:R1}. Then there exist positive constants $C_{R_2^\alpha}$ and $s$ that are independent of $\varepsilon$, $\gamma$ and $l$ such that 	
	\begin{align*}
	&\|\partial^l P^\perp_{\dot{\gamma}} R_2^{\alpha,\varepsilon} \gamma \|_{H^1} \leq  C_{R_2^\alpha} \, p_l^{(5n)}\Big(\Big\{  \frac{(|\eta|+1)!}{s^{|\eta|+1}} \Big\}_{|\eta| \leq l}, \Big\{C_1\|f_{i}^{(j)}\|_{H^1} \Big\}_{
		 \substack{
1\leq i \leq 5n \\
           1\leq j \leq l\ \, }	
	}\Big) 
	,\end{align*} 
	where $p_l^{(5n)}$ is the same universal polynomial as in Lemma~\ref{est:R1}.
\end{lem}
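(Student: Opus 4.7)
The plan is to mimic the argument for Lemma~\ref{est:R1} verbatim, replacing the integral representation of $P^\perp_{\dot{\gamma}} R_1^{\alpha,\varepsilon}\gamma$ from Theorem~\ref{thm:formR1} by the analogous representation of $P^\perp_{\dot{\gamma}} R_2^{\alpha,\varepsilon}\gamma$ obtained in Section~\ref{sec:remaining parts}. Recall that the latter reads
\begin{align*}
(P^\perp_{\dot{\gamma}} R_2^{\alpha,\varepsilon} \gamma )(x) = \int_{|w|\in [\varepsilon,\frac{1}{2}]}\iiiint_{[0,1]^4} (s_1-s_2)^2 \, G_2^\alpha(f_2(x))\,\mathrm{d}\phi_1\mathrm{d}\phi_2\mathrm{d}s_1\mathrm{d}s_2\,\frac{\mathrm{d}w}{|w|^{\alpha-2}},
\end{align*}
where $G_2^\alpha : \R^n\setminus\{0,1\}\times \R^{4n}\to \R^n$ is analytic and $f_2(x)$ is the $5n$-tuple built out of $\int_0^1 \dot{\gamma}(x+tw)\mathrm{d}t$, $\dot{\gamma}(x)$, $\ddot{\gamma}(x+s_2w+(s_1-s_2)\phi_1 w)$, $\ddot{\gamma}(x+s_2w+(s_1-s_2)\phi_2 w)$ and $\ddot{\gamma}(x)$.

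Since the integrand is smooth and $\varepsilon>0$, I would first pull $\partial^l$ inside the integrals. Then I would expand $\partial^l G_{2,k}^\alpha(f_2(x))$ componentwise by the multivariate Faà di Bruno formula \eqref{eqn:FdB}, obtaining
\begin{align*}
\partial^l G_{2,k}^\alpha(f_2(x)) = p_l^{(5n)}\Bigl(\{(\partial^\eta G_2^\alpha)_k(f_2(x))\}_{|\eta|\le l},\,\{f_{2,i}^{(j)}(x)\}_{1\le i\le 5n,\,1\le j\le l}\Bigr).
\end{align*}
Taking $H^1$-norms and using the Banach algebra estimate \eqref{prop:Banachalgebra} together with the one-homogeneity of $p_l^{(5n)}$ in its first slot, the constant $C_1$ from \eqref{prop:Banachalgebra} is absorbed only in front of $\|f_{2,i}^{(j)}\|_{H^1}$, exactly as in Lemma~\ref{est:R1}.

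The two points that replace the $R_1^\alpha$ step are: first, for each $1\le j\le l$ one still has $\|f_{2,i}^{(j)}\|_{H^1}\le \|f_i^{(j)}\|_{H^1}$ for $i\in\{1,\dots,n\}\cup\{4n+1,\dots,5n\}$ and $\|f_{2,i}^{(j)}\|_{H^1}=\|f_i^{(j)}\|_{H^1}$ for $n+1\le i\le 4n$, since translation is an $H^1$-isometry on $\RZ$ and integration in $t$ only decreases the norm; second, analyticity of $G_2^\alpha$ on a neighborhood of the compact image of $f_2$ yields, by the argument of \cite[Theor.~7.2]{BV19}, a uniform bound
\begin{align*}
\|(\partial^\eta G_2^\alpha)(f_2)\|_{H^1}\le \widetilde{C}_{R_2^\alpha}\,\frac{(|\eta|+1)!}{s^{|\eta|+1}}
\end{align*}
for some $s>0$ and $\widetilde{C}_{R_2^\alpha}>0$ independent of $\varepsilon$, $\gamma$ and $l$. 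Plugging these bounds in and using $\int_{|w|\in[\varepsilon,1/2]}|w|^{-(\alpha-2)}\mathrm{d}w\le \frac{2}{3-\alpha}$, the $\ell^2$-to-$\ell^1$ passage over the $5n$ components of the vector output yields the desired estimate with $C_{R_2^\alpha}:=\widetilde{C}_{R_2^\alpha}\cdot\frac{2}{3-\alpha}\cdot\sqrt{5n}$.

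I do not anticipate a genuine obstacle here: the proof is a structurally identical transcription of Lemma~\ref{est:R1}. The only point requiring minor care is verifying that the image of $f_2$ stays inside the domain of analyticity $\R^n\setminus\{0,1\}\times\R^{4n}$ of $G_2^\alpha$ uniformly in the parameters, which follows from the bi-Lipschitz property of the simple arc-length curve $\gamma$ (the integrated unit tangent has norm strictly between $0$ and $1$), exactly as was used in the $R_1^\alpha$ case.
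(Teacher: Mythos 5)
Your proposal is correct and matches the paper's approach exactly: the paper itself states that Lemma~\ref{est:R2} is proved ``along the lines of the previous Lemma~\ref{est:R1}'' without giving a separate argument, and your transcription faithfully reproduces that argument with $G_1^\alpha$, $f_1$ replaced by $G_2^\alpha$, $f_2$. (Minor remark: for $R_2^\alpha$ the last block of $f_2$ is $\ddot\gamma(x)$ itself rather than $\int_0^1\ddot\gamma(x+tw)(1-t)\,\mathrm{d}t$, so for $4n+1\le i\le 5n$ you actually have equality $\|f_{2,i}^{(j)}\|_{H^1}=\|f_i^{(j)}\|_{H^1}$ rather than the inequality you quote; since equality implies the inequality, this does not affect the conclusion.)
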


As a  direct consequence of the proof of \cite[Theor.~7.4]{BV19}
the estimates from Lemmas~\ref{est:Q}, \ref{est:R1} and \ref{est:R2} also hold for the limit case $\varepsilon \downarrow 0$.

\begin{thm}\label{thm:estimates}
	Let $l\in\N_0$ and $f = (\dot{\gamma},\dot{\gamma}, \ddot{\gamma}, \ddot{\gamma}, \ddot{\gamma})$ as in Lemma~\ref{est:R1}. Then there exist positive constants $r$, $s$, $C_{Q^\alpha}$, $C_{R_1^\alpha}$ and $C_{R_2^\alpha}$ which are independent of $\gamma$ and $l$ such that 
	\begin{align*}
	\|\partial^l P^T_{\dot{\gamma}} Q^\alpha\gamma\|_{H^1}  & \leq  C_{Q^\alpha} \sum_{k_1=0}^l \sum_{k_2=0}^{k_1} \binom{l}{k_1} \binom{k_1}{k_2}  \|\partial^{l-k_1+2} \gamma \|_{H^{1+\beta}} \|\partial^{k_1-k_2+2} \gamma \|_{H^{1+\beta}} \|\partial^{k_2 +1}\gamma \|_{H^1} \\
	\|\partial^l P^\perp_{\dot{\gamma}} R_1^\alpha \gamma \|_{H^1}  
	& \leq C_{R_1^\alpha} \, 
	p_l^{(5n)}\Big(\Big\{\frac{(|\eta|+1)!}{r^{|\eta|+1}} \Big\}_{|\eta| \leq l}, 
	\Big\{C_1\|f_{i}^{(j)}\|_{H^1} \Big\}_{
				 \substack{
1\leq i \leq 5n \\
           1\leq j \leq l\ \, }	
           } \Big)\\
\|\partial^l P^\perp_{\dot{\gamma}} R_2^\alpha \gamma \|_{H^1} 
	&  \leq  C_{R_2^\alpha} \,
	p_l^{(5n)}\Big(\Big\{  \frac{(|\eta|+1)!}{s^{|\eta|+1}} \Big\}_{|\eta| \leq l}, 
	\Big\{C_1\|f_{i}^{(j)}\|_{H^1} \Big\}_{
			 \substack{
1\leq i \leq 5n \\
           1\leq j \leq l\ \, }	
           }\Big)  
,	\end{align*} 
where $\beta := \alpha -2$ for some $2 < \alpha < 3$ and  $p_l^{(5n)}$ is  the universal polynomial  from Lemma~\ref{est:R1}.
\end{thm}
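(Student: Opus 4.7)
The plan is to combine the uniform-in-$\varepsilon$ estimates from Lemmas~\ref{est:Q}, \ref{est:R1}, and \ref{est:R2} with an $H^1$-convergence argument for the truncated quantities as $\varepsilon \downarrow 0$, following the strategy of \cite[Theor.~7.4]{BV19}. Since the right-hand sides in those lemmas are already independent of the cut-off parameter, once we verify that the left-hand sides converge in $H^1$ to the corresponding untruncated expressions, the three claimed inequalities will follow at once by letting $\varepsilon \downarrow 0$ on both sides.

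For the tangential part of $Q^\alpha$ I would argue as follows. The representation \eqref{eq:Qcont} together with the linearity of $Q^{\alpha,\varepsilon}$ shows $Q^{\alpha,\varepsilon}\gamma \to Q^\alpha\gamma$ uniformly on $\RZ$; since $Q^{\alpha,\varepsilon}$ commutes with differentiation, the same Taylor-with-remainder argument applied to $\partial^m \gamma$ yields $\partial^m Q^{\alpha,\varepsilon}\gamma \to \partial^m Q^\alpha\gamma$ uniformly for every integer $m\geq 0$, and in particular in $H^1$. Multiplication by the smooth tangent $\dot{\gamma}$ and taking inner products with it are continuous on $H^1$ by the Banach algebra property \eqref{prop:Banachalgebra}, so $\partial^l P^T_{\dot{\gamma}} Q^{\alpha,\varepsilon}\gamma \to \partial^l P^T_{\dot{\gamma}} Q^\alpha\gamma$ in $H^1$.

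For the orthogonal projections of the two remainder terms I would exploit the multiple-integral representations derived in Section~\ref{sec:remaining parts}. Their integrands are products of an analytic composition involving $G_i^\alpha$ with the weight $|w|^{2-\alpha}$, which is integrable on $[-\tfrac12,\tfrac12]$ thanks to $2<\alpha<3$, together with the benign factors $(r_1-r_2)^2$ or $(s_1-s_2)^2$. After differentiating $l$ times in $x$ by means of Fa\`a di Bruno \eqref{eqn:FdB} and using the Banach algebra property of $H^1$, the $H^1$-norm (in $x$) of the integrand is dominated uniformly in $\varepsilon$ by $|w|^{2-\alpha}$ multiplied by an $\varepsilon$-independent constant. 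Dominated convergence then yields $\partial^l P^\perp_{\dot{\gamma}} R_i^{\alpha,\varepsilon}\gamma \to \partial^l P^\perp_{\dot{\gamma}} R_i^\alpha \gamma$ in $H^1$ for $i=1,2$.

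The main technical point requiring care is that these convergences be verified in the $H^1$-topology and not merely pointwise, since the bounds in the preceding lemmas are stated in $H^1$. However, the smoothness of $\gamma$, the integrability of the weight $|w|^{2-\alpha}$, and the linearity of the operations involved ensure that the derivatives $\partial^l$, the limit $\varepsilon \downarrow 0$, and the integrations in $w, s_i, r_i, \phi_i, \psi_i$ may all be safely interchanged. Once these three $H^1$-convergences are established, Theorem~\ref{thm:estimates} follows immediately by passing to the limit in Lemmas~\ref{est:Q}, \ref{est:R1}, and \ref{est:R2} with the same constants $r, s, C_{Q^\alpha}, C_{R_1^\alpha}, C_{R_2^\alpha}$ as supplied there.
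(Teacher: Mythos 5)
Your proposal is correct and takes essentially the same route as the paper, which simply invokes the proof of \cite[Theor.~7.4]{BV19} at this point. Both arguments rest on the $\varepsilon$-independence of the bounds in Lemmas~\ref{est:Q}, \ref{est:R1}, \ref{est:R2} together with the $H^1$-convergence of the truncated expressions as $\varepsilon\downarrow 0$, which you justify (correctly) via the uniform convergence $\partial^m Q^{\alpha,\varepsilon}\gamma\to\partial^m Q^\alpha\gamma$ for each $m$ combined with the Banach algebra property of $H^1$, and via dominated convergence against the integrable weight $|w|^{2-\alpha}$ for the remainder terms.
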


We are now in a position to give the proof of the main theorem. 

\begin{proof}[Proof of Theorem~\ref{main}]
Let $\beta = \alpha -2$ for $2 <\alpha < 3$.
Now suppose
$\gamma = (\gamma_1,\ldots, \gamma_n):\RZ\rightarrow \R^n$
	is  a closed simple arc-length parametrized curve in the class $ C^\infty (\RZ,\R^n)$ which is a stationary point of  $\E^\alpha + \lambda \Ell$.  	
Using the smoothness of the curve $\gamma$, we introduce an auxiliary function $f:\RZ\rightarrow \R^{5n}$
	given by 
$$
	f(x):= (
	\dot{\gamma}(x) ,
	\dot{\gamma}(x) ,
	\ddot{\gamma}(x) ,
	\ddot{\gamma}(x) , 
	\ddot{\gamma}(x))
$$	
	and define 
	$$a_l :=C_1  \|\partial^l f\|_{H^{1+\beta}}$$
	 for all integers $l\geq 0$. If we can establish the existence of finite positive constants $\delta$ and $C_\gamma$ such that 
	\begin{align*}
	a_l \leq C_\gamma \frac{l!}{\delta^l} 
	\end{align*}
	we immediately obtain the analyticity of the curve $\gamma$ on $\RZ$ by Corollary~\ref{cor:TayloranalyticSob}.

In order to obtain the desired bounds on $a_l$ 
we first note that there exists a positive constant $\tilde{C}$ such that
	\begin{align*}
	\|\partial^{l+3} \gamma \|_{H^{1+\beta} } 
	&\leq \widetilde{C}   \Big( 
	\|\partial^l P^T_{\dot{\gamma}} Q^\alpha\gamma\|_{H^1} 
	+ \|\partial^l P^\perp_{\dot{\gamma}} R_1^\alpha\gamma \|_{H^1} \nonumber \\ &\qquad \qquad\quad 
	+ \|\partial^l P^\perp_{\dot{\gamma}} R_3^\alpha\gamma \|_{H^1} 
	+ \lambda \|\partial^{l+2} \gamma \|_{H^1} \Big)
	\end{align*}
by the criticality of the curve $\gamma$, the first variation formula of $\E^\alpha$ of  Theorem~\ref{thm:1stvar}, 
our decomposition of the first variation of $\E^\alpha + \lambda \Ell$ and Corollary~\ref{kor:Ql}.  
	Next we deduce from Theorem~\ref{thm:estimates} and the Sobolev embedding of $H^{1+\beta} \subseteq H^1$ that
	\begin{align*}
	a_{l+1} 
	& =  C_1 \|\partial^{l+1} f\|_{H^{1+\beta}} \\
	&  \leq C_1 (3 \|\partial^{l+3} \gamma \|_{H^{1+\beta}} + 2 \|\partial^{l+2} \gamma \|_{H^{1+\beta}}) \\
	& \leq 3\widetilde{C} C_1 \Big[
	\|\partial^l P^T_{\dot{\gamma}} Q^\alpha\gamma\|_{H^1} 
	+ \|\partial^l P^\perp_{\dot{\gamma}} R_1^\alpha\gamma \|_{H^1} \nonumber\\ &\hspace{2cm}
	+ \|\partial^l P^\perp_{\dot{\gamma}} R_2^\alpha\gamma \|_{H^1}  
	+ \lambda \|\partial^{l+2} \gamma \|_{H^1} \Big] + 2 C_1 a_l\\
	& \leq    3\widetilde{C} C_1 \Big[
	C_{Q^\alpha} \sum_{k_1=0}^l \sum_{k_2=0}^{k_1} \binom{l}{k_1} \binom{k_1}{k_2}  
	\|\partial^{l-k_1+2} \gamma \|_{H^{1+\beta}} 
	\|\partial^{k_1-k_2+2} \gamma \|_{H^{1+\beta}} \|\partial^{k_2 +1}\gamma \|_{H^1} \\
	&\hspace{2cm} 
	+ C_{R_1^\alpha} p_l^{(5n)}
	\Big(\Big\{ \frac{(|\eta|+1)!}{r^{|\eta|+1}} \Big\}_{|\eta| \leq l},
	 \Big\{C_1\|f_{i}^{(j)}\|_{H^1} \Big\}_{
	 			 \substack{
1\leq i \leq 5n \\
           1\leq j \leq l\ \, }	
           }\Big) \\
	& \hspace{2cm}\qquad 
	+  C_{R_2^\alpha} p_l^{(5n)}\Big(\Big\{\frac{(|\eta|+1)!}{s^{|\eta|+1}}  \Big\}_{|\eta| \leq l}, \Big\{C_1 \|f_{i}^{(j)}\|_{H^1} \Big\}_{
				 \substack{
1\leq i \leq 5n \\
           1\leq j \leq l\ \, }	
           }\Big) \\
           &\hspace{2cm}\qquad\qquad  
           + \lambda \|\partial^{l+2} \gamma \|_{H^1} \Big]
            + 2 C_1 a_l.
	\end{align*}
	Therefore,   by the embedding of  $H^{1+\beta} \subseteq H^1$ again, there exist positive constants $C > 1$ and $r_\gamma$ independent of $\gamma$ and $l$ such that
	\begin{align}\label{formula:al+1}
	a_{l+1} 
	& \leq   C \Big[
	\sum_{k_1=0}^l \sum_{k_2=0}^{k_1} \binom{l}{k_1} \binom{k_1}{k_2}  
	a_{l-k_1} a_{k_1-k_2} a_{k_2} \nonumber\\ &\hspace{3cm}
	 + p_l^{(5n)}\Big(\Big\{ \frac{(|\eta|+1)!}{r_\gamma^{|\eta|+1}}\Big\}_{|\eta| \leq l}, \Big\{ a_j \Big\}_{1\leq j \leq l}\Big)  
	 + a_l \Big] .
	\end{align}
	To apply Cauchy's method of majorants, we define a majorant $F: \R^{5n} \rightarrow \R^{5n}$ component-wise by setting
	\begin{align*}
	F_i(y) := C \Big( y_i^3 + \frac{1 }{(1 +\frac{ 5na_0 -  (y_1 + \cdots + y_{5n})}{r_\gamma})^2} + y_i\Big) 
	\end{align*}
	for any $y = (y_1,\ldots,y_{5n})\in \R^{5n}$ and all $1\leq i \leq 5n$. It is clearly analytic around $a_0 (1,\ldots, 1)$. Moreover, we  consider the following initial value problem 
	\begin{align}\label{formula:ODE}
	\begin{split}
	\dot{c}(t) &= F( c(t)), \\
	c(0) &= a_0 (1,\ldots, 1).
	\end{split}
	\end{align}
	 and note that the ODE has a unique  smooth solution $c:	 (-\varepsilon, \varepsilon) \to \R^{5n}$
	that is also analytic around $0$ for some $\varepsilon >0$  by Theorem~\ref{CK}.
Hence  we can write $c=c(t)$ as a Taylor series in a neighbourhood of $0$,  i.e.
	\begin{align*}
	c (t):= \sum_{k=0}^{\infty} \frac{\widetilde{a}_{k} (1,\ldots, 1)}{k!}  t^k 
	\end{align*}
	where $\widetilde{a}_{k} (1,\ldots, 1)= c^{(k)}(0)$. 
	Then by Theorem~\ref{thm:Tayloranalytic}
we can, for all $t\in B_{r_c}(0)$, bound 	
		\begin{align}\label{formula:canalytic}
	|c^{(l)}(t)| \leq M \frac{l!}{r_c^l}
	\end{align}
	for some 
	finite  positive constants $r_c$ and $M$ which are independent of any integer $l\geq 0$.
	By applying the Leibniz rule and Faà di Bruno's formula 
\eqref{eqn:FdB}  
we have that 
\begin{align*}	
	&\partial^l F_i(c(t)) \\
	&=  C \Big[
	\sum_{k_1=0}^l \sum_{k_2=0}^{k_1} \binom{l}{k_1} \binom{k_1}{k_2}   c_i^{(l-k_1)}(t)  c_i^{(k_1-k_2)}(t) c_i^{(k_2)}(t) \\
	&\quad
 +p_l^{(5n)}\Big(
 \Big\{ \frac{(|\eta|+1)!}{r_\gamma^{|\eta|+1}} 
 \frac{1}{\left(1 + \left( \frac{5na_0}{r_\gamma} -\frac{\left(c_1(t) + \cdots + c_{5n}(t)\right)}{r_\gamma}\right)\right)^{|\eta|+2}} 
 \Big\}_{|\eta| \leq l}, \Big\{ c_i^{(j)}(t) \Big\}_{
 				 \substack{
1\leq i \leq 5n \\
           1\leq j \leq l\ \, }	
           }\Big)\\
 	&\quad\qquad + c_i^{(l)} (t) \Big]
	\end{align*}	
 for any $1\leq i\leq 5n$ and $l\in\N_0$.
Then by considering the initial condition \eqref{formula:ODE} we find that 
	\begin{align}\label{formula:tildeal+1}
	\tilde a_{l+1} 
& = \partial^l F_i (c(0)) \nonumber\\
&=  C \Big[  
\sum_{k_1=0}^l \sum_{k_2=0}^{k_1} \binom{l}{k_1} \binom{k_1}{k_2}   
\tilde a_{l-k_1} \tilde a_{k_1-k_2} \tilde a_{k_2} \nonumber\\ &\hspace{2cm} 
	 + p_l^{(5n)}\Big(\Big\{ \frac{(|\eta|+1)!}{r_\gamma^{|\eta|+1}} \Big\}_{|\eta| \leq l}, \Big\{\tilde a_j \Big\}_{1\leq j \leq l}\Big) + \tilde a_l  \Big]
	.\end{align}
	Finally an induction argument obtained from comparing \eqref{formula:al+1} and \eqref{formula:tildeal+1}, together with the fact that Faà di Bruno's polynomials $p_l$ \eqref{eqn:FdB} have non-negative coefficients	
and the initial condition $a_0 = \widetilde{a}_0 $, gives
	\begin{align*}
	a_l  \leq \widetilde{a}_{l}. 
	\end{align*}
	Then by \eqref{formula:canalytic} we conclude that  $\|\partial^{l+1} \gamma \|_{H^{1+\beta}} \leq a_l \leq M\frac {l!}{r_C^l}$ for all $l\in\N_0$ which,  by Corollary~\ref{cor:TayloranalyticSob}, implies the analyticity of the curve $\gamma$.
\end{proof}

\end{document}